\newtheorem{definition}{Definition}[section]
\newtheorem{theorem}[definition]{Theorem}
\newtheorem{lemma}[definition]{Lemma}
\newtheorem{remark}[definition]{Remark}
\newtheorem{assumption}{Assumption}[section]
\date{}
\begin{document}
\baselineskip 18pt
\bibliographystyle{plain}
\title[Pseudoinverse-free randomized methods]
{On pseudoinverse-free randomized methods for linear systems: Unified framework and acceleration}

\author{Deren Han}
\address{LMIB of the Ministry of Education, School of Mathematical Sciences, Beihang University, Beijing, 100191, China. }
\email{handr@buaa.edu.cn}

\author{Jiaxin Xie}
\address{LMIB of the Ministry of Education, School of Mathematical Sciences, Beihang University, Beijing, 100191, China. }
\email{xiejx@buaa.edu.cn}

\begin{abstract}
We present a new framework for the analysis and design of randomized algorithms for solving  various types of linear systems, including  consistent or inconsistent, full rank or rank-deficient.
Our method is formulated with \emph{four randomized sampling parameters}, which allows the method to cover many existing randomization algorithms within a unified framework, including the doubly stochastic Gauss-Seidel, randomized Kaczmarz method, randomized coordinate descent method, and Gaussian Kaczmarz method.
 Compared with the projection-based block algorithms where a pseudoinverse for solving a least-squares problem is utilized at each iteration, our design is  \emph{pseudoinverse-free}. Furthermore,
the flexibility of the new approach also enables the design of a number of new methods as special cases.
Polyak's \emph{ heavy ball momentum}  technique is also introduced in our framework for improving  the convergence behavior of the method.
 We prove the global linear convergence rates of our method
as well as an accelerated linear rate for the case of the norm of expected iterates.
Finally, numerical experiments are  provided to confirm our results.
\end{abstract}

\maketitle

\let\thefootnote\relax\footnotetext{Key words: Linear systems, Pseudoinverse-free, Heavy ball momentum, Randomized Kaczmarz, Randomized coordinate descent, Doubly stochastic Gauss-Seidel}

\let\thefootnote\relax\footnotetext{Mathematics subject classification (2020): 65F10, 65F20, 90C25, 15A06, 68W20}

\let\thefootnote\relax\footnotetext{Corresponding author: Jiaxin Xie, LMIB of the Ministry of Education, School of Mathematical Sciences, Beihang University, Beijing, 100191, China. E-mail: xiejx@buaa.edu.cn}
\section{Introduction}

Randomized iterative methods such as the randomized Kaczmarz method have been very popular recently for solving large-scale linear systems.
They are preferred mainly because of their low iteration cost and
low memory storage so that they scale better with the size of the problems.
Randomized methods are now  playing a major role in areas like numerical linear algebra, scientific computing, and optimization.
 Their applications include tensor recovery \cite{chen2021regularized}, image reconstruction \cite{herman1993algebraic}, signal processing \cite{Byr04}, optimal control \cite{Pat17},  partial differential equations \cite{Ols14}, and machine learning \cite{Cha08}.

Let us consider the following linear system
\begin{equation}
\label{main-prob}
Ax=b,
\end{equation}
where $A\in\mathbb{R}^{m\times n}$ and $b\in\mathbb{R}^m$. The \emph{Kaczmarz} method \cite{Kac37}, also known as \emph{algebraic reconstruction technique} (ART) \cite{herman1993algebraic,gordon1970algebraic}, is one of the popular methods for solving \eqref{main-prob}.
At each step, the method cyclically projects the current iterate onto the solution space of a single constraint and  converges to a certain solution of the consistent linear systems. However, the rate of convergence is hard to obtain. In a seminal paper \cite{Str09}, Strohmer and Vershynin studied the \emph{randomized  Kaczmarz} (RK) method and proved its linear convergence  for solving consistent linear systems.
Their elegant result  has inspired a lot of subsequent work. We refer the interested reader to
\cite{bai2021greedy,HSXDR2022,Bai18Con,Bai18Gre,Du20Ran,Du20Kac,DS2021,Gow15,Gow19,Hef15,Jia17,Lev10,Nut16,Sha21,Ste20Ran,Ste20Wei,Str09,yuan2022adaptively}
and the references therein.

In \cite{Gow15}, Gower and Richt\'{a}rik developed a versatile randomized iterative method for solving consistent linear systems  which includes the RK algorithm as a special case.
 The  method updates with the following iterative strategy:
\begin{equation}\label{sketch-project}
x^{k+1}=x^k-\alpha B^{-1}A^\top S(S^\top AB^{-1}A^\top S)^\dagger S^\top(Ax^k-b),
\end{equation}
where $S\in\mathbb{R}^{m\times q}$ are random matrices drawn from a distribution $\mathcal{D}$ at each step,  $B\in\mathbb{R}^{n\times n}$ is a user-given symmetric positive matrix, $\alpha\in(0,2)$ is a stepsize parameter, and the symbols $\dagger$ and $\top$ denote the Moore-Penrose pseudoinverse and the transpose of either a vector or a matrix, respectively. By varying these two parameters $S$ and $B$, one can recover a number of popular methods as special cases, including the  RK method, the Gaussian Kaczmarz method, the randomized Newton method, random Gaussian pursuit, and variants of all these methods using blocks and importance sampling. Their method is the first algorithm uncovering the close relations between
 these methods and  is also known  as the \emph{sketch-and-project} method.
 The method is shown to enjoy linear convergence from which existing
convergence results can also be obtained.

However, the main disadvantage of \eqref{sketch-project} is that each iteration is expensive since we need to apply the pseudoinverse to a vector, or equivalently, we have to solve a least-squares problem at each iteration. Moreover, it is also difficult to parallelize.
In this paper, we aim to develop randomized algorithms with accelerated  convergence rates that do not employ the pseudoinverse.
Indeed, we  will utilize Polyak's heavy ball momentum \cite{polyak1964some} technique to accelerate the convergence rate of the method.


{\bf The heavy ball method.}
Consider the following optimization problem:
$$
\min\limits_{x\in\mathbb{R}^n} f(x),
$$
where $f$ is a differentiable convex function.
Gradient descent (GD) is  widely used method for
solving the above problem.
Starting with an arbitrary point $x^0$,  the iteration scheme of the original
GD method reads as
$$
x^{k+1}=x^{k}-\gamma_{k} \nabla f\big(x^{k}\big),
$$
where $\gamma_{k}>0$ is a stepsize and $\nabla f\left(x^{k}\right)$ denotes the gradient of $f$ at $x^k$. For convex functions with $L$-Lipschitz gradient, GD needs $O(L / \varepsilon)$ steps to guarantee the error
of the solution is within $\varepsilon$. When, in addition, $f$ is $\mu$-strongly convex,
it converges linearly with convergence rate of $O(\log (\varepsilon^{-1})(L / \mu))$ \cite{nesterov2003introductory}.
To improve the rate of convergence, Polyak proposed to modify GD by the introduction of a (heavy ball) momentum term, $\omega\left(x^{k}-x^{k-1}\right)$. This leads to the gradient descent method with momentum (mGD), popularly known as the heavy ball method:
$$
x^{k+1}=x^{k}-\gamma_{k} \nabla f\big(x^{k}\big)+\omega\big(x^{k}-x^{k-1}\big) .
$$
Polyak \cite{polyak1964some} proved that for twice continuously differentiable objective function $f(x)$ with $\mu$ strong convexity constant and $L$-Lipschitz gradient, mGD achieves a local accelerated linear convergence rate of $O\left(\log (\varepsilon^{-1})\sqrt{L / \mu}\right)$ (with the appropriate choice of the stepsize parameters $\gamma_{k}$ and momentum parameter $\omega$).

\subsection{Our contribution}

In this paper, we propose the following more general randomized algorithms framework for solving the systems of linear equations
\begin{equation}\label{pdfriter}
x^{k+1}=x^k-\alpha T_1T_2^\top A^{\top}S_1S_2^\top(Ax^k-b)+\omega(x^k-x^{k-1}),
\end{equation}
where $\alpha>0$ is a stepsize parameter, $\omega\geq0$ is a momentum parameter , and $(S_1,S_2,T_1,T_2)$ is a random parameter matrix pair which is sampled independently in each iteration from a certain distribution $\mathcal{D}$.
The main contributions of this work are as follows.
\begin{itemize}
  \item[1.] We propose a novel randomized algorithmic framework for solving different types of  linear systems,  including consistent or inconsistent, full rank or rank-deficient.
  We show the method enjoys a global linear convergence rate. 
   We also study the convergence of the quantity $\|\mathbb{E}[x^k-x^0_*]\|_2^2$. In this case, we show that by a  proper combination of the stepsize
parameter $\alpha$ and the momentum parameter $\omega$, the proposed method enjoys an accelerated convergence rate.
  \item[2.] The convergence of the algorithm in some special cases is also studied. Particularly, global linear convergence rates are established under various formulations and assumptions about the sampling matrices and the linear systems.
  \item[3.] We recover several known methods as a special case of our general framework which enables us to draw previously unknown links between those methods. The flexibility of the method also allows us to adjust the parameter matrices  to obtain completely new methods. Particularly, we study the methods based on Gaussian sampling.
\end{itemize}

\subsection{Related work}

This result is related to the recent work of Du and Sun in \cite{DS2021}, where two novel pseudoinverse-free randomized methods for solving a linear system are proposed. In particular, they studied the framework in \eqref{pdfriter} with $S_1=S_2$, $T_1=T_2$, and $\omega=0$. When $A$ is a full column rank matrix, they \cite[Theorem 4]{DS2021} showed that the method converges linearly in the mean square sense.
We  emphasize that our convergence result is more general and works for all types of matrices (consistent or inconsistent, full rank or rank-deficient); see Theorems \ref{THMr-in} and \ref{THMr-in629}. In addition,  we use Polyak's heavy ball momentum for accelerating the convergence of the method in our framework.

There is a variant of the \emph{randomized block Kaczmarz} (RBK) that avoids the issues of pseudoinverse,
we emphasize the results of Necoara \cite{Nec19}.
 This leads to the following iteration:
\begin{equation}\label{BKM}
x^{k+1}=x^k-\alpha\bigg(\sum\limits_{i\in J}\omega_i\frac{\langle a_i,x^k\rangle-b_i}{\|a_i\|^2_2}a_i\bigg),
\end{equation}
where the weights $\omega_i\in[0,1]$ such that $\sum\limits_{i\in J}\omega_i=1$, $J$  is drawn from a certain distribution $\mathcal{D}$ at each step,  $\alpha>0$ is the stepsize parameter, and $a_1^\top,\ldots,a_m^\top$ denote the rows of $A$. Let $I_{:J}$
denote a column concatenation of the columns of the $m\times m$ identity matrix $I$ indexed by $J$, and the diagonal matrix  $D_J:=\mbox{diag}(\sqrt{\omega_i}/\|a_i\|_2,i\in J)$.
Then the iteration \eqref{BKM} can alternatively be written as
\begin{equation}\label{BKMs}
x^{k+1}=x^k-\alpha A^\top S S^\top (Ax^k-b),
\end{equation}
where $S = I_{:J}D_J$, which can be viewed as randomized matrices selected from the distribution $\mathcal{D}$ at each step. We note that the iteration scheme \eqref{BKMs} can also be recovered by our framework in the same  special cases; see Section \ref{sbst:mRBK} for more details. Indeed, it is also an interesting topic that one can equip \eqref{BKMs} with more general sampling matrices $S$  for obtaining  new classes of randomized algorithms.

Recently, Loizou and Richt{\'a}rik \cite{loizou2020momentum} studied several kinds of stochastic optimization algorithms enriched by Polyak's heavy ball momentum for solving convex quadratic problems. They proved the global, non-asymptotic linear convergence rates of the proposed methods. The momentum variants of the sketch-and-project method has been investigated in \cite{P2017Stochastic,loizou2021revisiting}.
In \cite{morshed2020stochastic}, Polyak's momentum technique was incorporated  into the stochastic steepest descent methods for solving linear systems. Their  momentum framework integrated several momentum algorithms, such as the Kaczmarz method, the block Kaczmarz method, and the coordinate descent,  into one framework.
 In \cite{HSXDR2022}, the authors studied the Douglas-Rachford (DR) method \cite{Lin20,Ara14,Li16,Eck92,aragon2019cyclic,censor2016new,han2022survey} enriched with randomization and Polyak's heavy
ball momentum for solving linear systems. They showed the power of randomization in simplifying the analysis of the DR method and made the divergent $r$-sets-DR method converge linearly.

We note that a more widely used, and better understood alternative to Polyak's momentum is the momentum introduced by Nesterov \cite{nesterov1983method,nesterov2003introductory}, leading to the famous accelerated gradient descent (AGD) method \cite{beck2009fast}. Recently, variants of Nesterov's momentum have also been introduced for the acceleration of stochastic optimization algorithms \cite{lan2020first}.  In
\cite{liu2016accelerated}, Liu and Wright applied the acceleration scheme of Nesterov to the randomized Kaczmarz method. Very recently,  Nesterov's acceleration scheme has been applied to the sampling Kaczmarz Motzkin (SKM) algorithm for linear feasibility problems \cite{morshed2020accelerated,morshed2021sampling}.

\subsection{Organization}
 The remainder of the paper is organized as follows.
In Section 2, we give some notations and a useful lemma.
In Section 3, we present the pseudoinverse-free randomized (PFR) method and analysis its convergence properties.
In Section 4, we propose the PFR with momentum (mPFR) and show its accelerated
linear convergence rate. We also study the convergence of the algorithm in some special cases.
In Section 5, we mention how by selecting the parameters $(T_1,T_2,S_1,S_2)$ of our method, we recover several existing methods as well as obtain new methods.
In Section 6, we perform some numerical experiments to show
the effectiveness of the proposed methods. Finally, we conclude the paper in Section 7.
Proofs of all main results can be found in the ``Appendix''.

\section{Notation and preliminaries}
Throughout the paper, for any random variables $\xi$ and $\zeta$, we use $\mathbb{E}[\xi]$ and $\mathbb{E}[\xi\lvert \zeta]$ (or $\mathbb{E}[\xi \lvert \zeta_0]$) to denote the expectation of $\xi$ and the conditional expectation of $\xi$ given $\zeta$.
For an integer $m\geq 1$, let $[m]:=\{1,\ldots,m\}$. For any vector $x\in\mathbb{R}^n$, we use $x_i,x^\top$ and $\|x\|_2$ to denote the $i$-th entry, the transpose and the Euclidean norm of $x$, respectively. We use $e_i$ to denote the column vector with a $1$ at the $i$-th position and zeros elsewhere.
For any matrix $A$, we use $a_i, A_j,a_{i,j},A^\top,A^\dagger,\|A\|_2,\|A\|_F$ and $\mbox{Range}(A)$ to denote the $i$-th row, the $j$-th column, the $(i,j)$-th entry, the transpose, the Moore-Penrose pseudoinverse, the spectral norm, the Frobenius norm and the column space of $A$, respectively. Let $A\in\mathbb{R}^{m\times n}$, we use $A=U\Sigma V^\top$ to denote the singular value decomposition (SVD) of $A$, where $U\in\mathbb{R}^{m\times m}$, $\Sigma\in\mathbb{R}^{m\times n}$, and $V\in\mathbb{R}^{n\times n}$.
The nonzero singular values of a matrix $A$ are $\sigma_1(A)\geq\sigma_2(A)\geq\ldots\geq\sigma_{r}(A):=\sigma_{\min}(A)>0$, where $r$ is the rank of $A$
and we use $\sigma_{\min}(A)$ to denote  the smallest nonzero singular values of $A$.  We see that $\|A\|_2=\sigma_{1}(A)$ and $\|A\|_F=\sqrt{\sum\limits_{i=1}^r \sigma_i(A)^2}$. For a square matrix $B\in\mathbb{R}^{n\times n}$, we use $\mbox{Tr}(B):=\sum\limits_{i=1}^n b_{ii}$ to denote its trace.
For index sets $\mathcal{R}\subseteq[m]$ and $\mathcal{L}\subseteq [n]$, we use $A_{\mathcal{R}:}$ and $A_{:\mathcal{L}}$ to denote the row submatrix indexed by
$\mathcal{R}$ and the column submatrix indexed by $\mathcal{L}$, respectively. We use $|\mathcal{R}|$ to denote the cardinality of a set $\mathcal{R}\subseteq[m]$.

Throughout this paper, we use $x^*$ to denote a certain solution of the linear system \eqref{main-prob}, and for any $x^0\in\mathbb{R}^n$, we set $$x_0^*:=A^\dagger b+(I-A^\dagger A)x^0 $$
and
$$ x_{LS}^*:=A^\dagger b.$$
We know that $x_0^*$
is the orthogonal projection of $x^0$ onto the set
$$
\{x\in\mathbb{R}^n|A^\top A x=A^\top b\},
$$
and $x_{LS}^*$ is the least-squares or the least-norm least-squares solution of the linear system.
In this paper, by ``momentum'' we refer to the heavy ball technique originally developed by Polyak \cite{polyak1964some} to accelerate the convergence rate of gradient-type methods.

Throughout this paper, we will always  assume that the sampling matrices satisfy the following assumption.
\begin{assumption}\label{assumption1}
The random parameter matrix pair $(S_1,S_2,T_1,T_2)$  is sampled independently in each iteration from a distribution $\mathcal{D}$ and satisfies
$$\mathbb{E}[T_1T_2^\top A^\top S_1S_2^\top]=\frac{A^\top}{\|A\|^2_F}.$$
\end{assumption}
We note that in Assumption \ref{assumption1}, the random parameter matrices $S_1,S_2\in\mathbb{R}^{m\times p}$ and $T_1,T_2\in\mathbb{R}^{n\times s}$ can be independent or not.
 We also emphasize that we do not restrict the numbers of columns of $S_1$, $S_2,T_1$ and $T_2$; indeed, we allow $p$ and $s$ to vary (and hence $p$ and $s$ are random variables).

We will utilize the following result to estimate the upper bound of the stepsize parameter for the Gaussian-type randomized algorithms.

\begin{lemma}
\label{lemma-gaussmatrix}
Suppose that $S\in\mathbb{R}^{m\times p}$ is a Gaussian matrix, i.e. has independent $\mathcal{N}(0,1)$ entries.
Then for any fixed $A\in\mathbb{R}^{m\times n}$,
$$
\mathbb{E}[ S S^\top AA^\top S S^\top ]=(p^2+p) AA^\top +p\|A\|^2_F I.
$$
\end{lemma}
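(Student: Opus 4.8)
The plan is to reduce everything to second- and fourth-moment identities for Gaussian vectors. Write the columns of $S$ as $s_1,\dots,s_p$, so that each $s_k$ is an i.i.d.\ standard Gaussian vector in $\R^m$ and $SS^\top=\sum_{k=1}^p s_ks_k^\top$. Abbreviating $M:=AA^\top$ (symmetric positive semidefinite), I would first expand the product into a double sum,
$$
SS^\top M SS^\top=\sum_{i=1}^p\sum_{j=1}^p s_is_i^\top M s_js_j^\top,
$$
take expectations term by term, and then separate the off-diagonal contribution ($i\neq j$) from the diagonal contribution ($i=j$), since these are governed by different moments of $s$.

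For the off-diagonal terms, independence of $s_i$ and $s_j$ together with $\E[s_ks_k^\top]=I$ gives $\E[s_is_i^\top M s_js_j^\top]=\E[s_is_i^\top]\,M\,\E[s_js_j^\top]=M$; an entrywise check confirms this, as $\E[(s_i)_a(s_i)_c]\,\E[(s_j)_d(s_j)_b]=\delta_{ac}\delta_{db}$ contracts against $M_{cd}$ to yield $M_{ab}$. Summing over the $p(p-1)$ ordered pairs with $i\neq j$ contributes $(p^2-p)M$. The diagonal terms are where the real work lies: for a single standard Gaussian vector $s\in\R^m$ I would invoke the Isserlis/Wick fourth-moment identity
$$
\E[s_as_cs_ds_b]=\delta_{ac}\delta_{db}+\delta_{ad}\delta_{cb}+\delta_{ab}\delta_{cd}.
$$
Contracting this against $M_{cd}$ and using the symmetry $M=M^\top$ collapses the three pairings into the entrywise expression $2M_{ab}+\delta_{ab}\,\mbox{Tr}(M)$, that is, $\E[ss^\top M ss^\top]=2M+\mbox{Tr}(M)\,I$. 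As there are $p$ diagonal terms, they contribute $2pM+p\,\mbox{Tr}(M)\,I$.

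Adding the two contributions gives
$$
\E[SS^\top M SS^\top]=(p^2-p)M+2pM+p\,\mbox{Tr}(M)\,I=(p^2+p)M+p\,\mbox{Tr}(M)\,I,
$$
and substituting back $M=AA^\top$ together with $\mbox{Tr}(AA^\top)=\normf{A}^2$ yields exactly the stated formula. The only nonroutine step is the correct evaluation of the diagonal fourth-moment term: one must apply Wick's theorem carefully and track all three pairings, noting that the ``crossed'' pairing produces $M^\top=M$ while the remaining pairing produces the $\mbox{Tr}(M)\,I$ term. The off-diagonal computation and the final bookkeeping are elementary, so this diagonal fourth-moment calculation is the main obstacle.
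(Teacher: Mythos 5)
Your proof is correct, and while it shares the same skeleton as the paper's argument---writing $SS^\top=\sum_k s_ks_k^\top$ and splitting the double sum into diagonal ($i=j$) and off-diagonal ($i\neq j$) contributions---the execution of the two key computations is genuinely different and, in fact, cleaner. The paper handles the off-diagonal terms by an entrywise moment calculation on the entries $q_{\ell i}$ of $Q=U^\top S$, and handles the diagonal terms via a separate lemma that first invokes the SVD $A=U\Sigma V^\top$ and rotation invariance to reduce to a diagonal matrix $\Sigma\Sigma^\top$, and then evaluates $\mathbb{E}[\zeta_i^4]=3$ and $\mathbb{E}[\zeta_i^2\zeta_j^2]=1$ by hand. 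You bypass the SVD entirely: independence plus $\mathbb{E}[s_ks_k^\top]=I$ disposes of the off-diagonal terms in one line, and the Isserlis/Wick identity $\mathbb{E}[s_as_cs_ds_b]=\delta_{ac}\delta_{db}+\delta_{ad}\delta_{cb}+\delta_{ab}\delta_{cd}$, contracted against the symmetric matrix $M=AA^\top$, gives $\mathbb{E}[ss^\top Mss^\top]=2M+\mathrm{Tr}(M)I$ directly for general $M$. This is the same underlying moment structure the paper computes (Wick's theorem specializes to exactly those fourth moments), but your route avoids the basis change and the case analysis over singular values, and it makes transparent where each of the three terms in the final answer comes from (the two ``paired'' contractions give $2M$, the trace pairing gives $\mathrm{Tr}(M)I$, and independence gives $(p^2-p)M$). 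The bookkeeping $(p^2-p)M+2pM+p\,\mathrm{Tr}(M)I=(p^2+p)AA^\top+p\|A\|_F^2I$ is right, so there is no gap.
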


\section{The pseudoinverse-free randomized method for linear systems }
\label{section-RrDR}

Let us first study the \emph{pseudoinverse-free randomized} (PFR) method for solving linear systems.
Given an arbitrary initial guess $x^0\in\mathbb{R}^n$, the $k$-th iterate of the  PFR method is defined as
\begin{equation}\label{dsbi1}
x^{k+1}=x^{k}-\alpha T_1T_2^\top A^\top S_1S_2^\top(Ax^{k}-b),
\end{equation}
where the stepsize parameter $\alpha>0$. Note that \eqref{dsbi1} is equivalent with the no-momentum variant of \eqref{pdfriter}, i.e. $\omega=0$, and
under Assumption \ref{assumption1}, we have
$$
\mathbb{E}[x^k|x^{k-1}]=x^{k-1}-\frac{\alpha}{\|A\|^2_F} A^\top(Ax^{k-1}-b),
$$
which is the update of the Landweber iteration \cite{landweber1951iteration}. The PFR
method is formally described in Algorithm \ref{PFR1}.

\begin{algorithm}[htpb]
\caption{The pseudoinverse-free randomized (PFR) algorithm \label{PFR1}}
\begin{algorithmic}
\Require
Distribution $\mathcal{D}$ from which to sample matrices, $A\in \mathbb{R}^{m\times n}$, $b\in \mathbb{R}^m$,  stepsize/relaxation parameter $\alpha>0$, $k=0$, and an initial $x^0\in \mathbb{R}^{n}$.
\begin{enumerate}
\item[1:] Randomly select a matrix pair $(S_1,S_2,T_1,T_2)$ from the distribution $\mathcal{D}$.
\item[2:] Update
$$x^{k+1}=x^{k}-\alpha T_1T_2^\top A^\top S_1S_2^\top(Ax^{k}-b).$$
\item[3:] If the stopping rule is satisfied, stop and go to output. Otherwise, set $k=k+1$ and return to Step $1$.
\end{enumerate}

\Ensure
  The approximate solution.
\end{algorithmic}
\end{algorithm}

\subsection{Convergence of iterates: Linear rate}

In this subsection, we analyze some convergence properties of Algorithm \ref{PFR1}. We emphasize that the matrix pair $(S_1,S_2,T_1,T_2)$ selected from the distribution $\mathcal{D}$ satisfies the Assumption \ref{assumption1}.
The following theorem shows that Algorithm \ref{PFR1} converges linearly in expectation for different types of linear systems (consistent or inconsistent, full rank or rank-deficient).

\begin{theorem}\label{THMr-in}
Let $x_{LS}^*$ be the least-squares or the least-norm least-squares solution of the linear system $Ax=b$ and $\{x^k\}_{k=0}^\infty$ be the sequence generated by Algorithm \ref{PFR1}. Denote by $r^*=Ax_{LS}^*-b$ and $r^k=Ax^k-b$.
For arbitrary $x^0\in\mathbb{R}^n$, if $0<\alpha<\frac{\sigma^2_{\min}(A)}{\beta\|A\|^2_F}$, then the
$k$-th iterative residuals $r^k=Ax^k-b$ of Algorithm \ref{PFR1} satisfies
$$
\mathbb{E}\big[\|r^k-r^*\|^2_2\big]\leq\eta^k \|r^0-r^*\|^2_2+\frac{2\alpha^2\beta(1-\eta^k)\|r^*\|^2_2}{1-\eta},
$$
where
\begin{equation}\label{eta-beta}
\eta=1-2\alpha\frac{\sigma_{\min}^2(A)}{\|A\|^2_F}+2\alpha^2\beta \ \ \mbox{and} \ \  \beta=\big\|\mathbb{E}[ S_2S_1^\top AT_2T_1^\top A^\top A T_1T_2^\top A^\top S_1S_2^\top ]\big\|_2.
\end{equation}
\end{theorem}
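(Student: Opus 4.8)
The plan is to track the residual $r^k = Ax^k - b$ rather than the iterate itself, since the update is linear in $r^k$. Writing $M := A T_1 T_2^\top A^\top S_1 S_2^\top$ for the random one-step operator, left-multiplying \eqref{dsbi1} by $A$ and subtracting $b$ gives the clean recursion $r^{k+1} = (I - \alpha M) r^k$, and hence
$$r^{k+1} - r^* = (I - \alpha M)(r^k - r^*) - \alpha M r^*.$$
Two structural facts drive the argument. First, because $r^* = A x_{LS}^* - b = (AA^\dagger - I)b$ is the least-squares residual, $A^\top r^* = 0$, i.e. $r^*$ is orthogonal to $\mathrm{Range}(A)$. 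Second, $r^k - r^* = A(x^k - x_{LS}^*) \in \mathrm{Range}(A)$. I will abbreviate $u := r^k - r^*$ throughout. A further useful observation is that $\beta$ as defined in \eqref{eta-beta} is exactly $\|\mathbb{E}[M^\top M]\|_2$, since $M^\top = S_2 S_1^\top A T_2 T_1^\top A^\top$.

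Next I would take the conditional expectation given $x^k$ over the random draw of $(S_1,S_2,T_1,T_2)$ and expand $\|r^{k+1}-r^*\|_2^2$, which produces a linear, a quadratic, and a mixed term. Assumption \ref{assumption1} gives $\mathbb{E}[M] = AA^\top/\|A\|^2_F$, so the linear term is $\langle u, \mathbb{E}[M]u\rangle = \|A^\top u\|_2^2/\|A\|^2_F$; since $u \in \mathrm{Range}(A)$, the SVD yields the crucial lower bound $\|A^\top u\|_2^2 \ge \sigma_{\min}^2(A)\|u\|_2^2$, which is what turns $\mathbb{E}[M]$ into a genuine contraction. For the quadratic pieces I use $\mathbb{E}\|M u\|_2^2 = u^\top \mathbb{E}[M^\top M] u \le \beta\|u\|_2^2$ and likewise $\mathbb{E}\|Mr^*\|_2^2 \le \beta\|r^*\|_2^2$. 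Crucially, the orthogonality $A^\top r^* = 0$ annihilates the mixed expectation $\langle u, \mathbb{E}[M]r^*\rangle$ (it contains $AA^\top r^* = 0$); this is exactly what allows the inconsistent case to go through.

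The one remaining cross term $2\alpha^2\, u^\top \mathbb{E}[M^\top M] r^*$ I would control by the elementary inequality $2\,u^\top G r^* \le u^\top G u + (r^*)^\top G r^* \le \beta(\|u\|_2^2 + \|r^*\|_2^2)$, valid for the PSD matrix $G = \mathbb{E}[M^\top M]$ because $(u-r^*)^\top G (u-r^*)\ge 0$. This splitting is precisely what contributes the second factor of $\alpha^2\beta$, producing the stated $\eta = 1 - 2\alpha\sigma_{\min}^2(A)/\|A\|^2_F + 2\alpha^2\beta$ together with the noise coefficient $2\alpha^2\beta\|r^*\|_2^2$. Collecting the bounds gives the one-step estimate $\mathbb{E}[\|r^{k+1}-r^*\|_2^2 \mid r^k] \le \eta\|r^k - r^*\|_2^2 + 2\alpha^2\beta\|r^*\|_2^2$. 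Taking full expectations by the tower property and unrolling this scalar recursion as a geometric series yields the closed form, where the stepsize restriction $0<\alpha<\sigma_{\min}^2(A)/(\beta\|A\|^2_F)$ is exactly the condition forcing $\eta<1$ (and in fact $\eta \ge 1/2$, using $\beta = \|\mathbb{E}[M^\top M]\|_2 \ge \|\mathbb{E}[M]\|_2^2$, so the geometric sum is well-behaved).

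I expect the main obstacle to be the careful bookkeeping of the inconsistent case: one must deploy both orthogonality relations at once, using $A^\top r^* = 0$ to discard the mixed expectation and to separate the irreducible noise floor $\|r^*\|_2^2$, while using $r^k - r^* \in \mathrm{Range}(A)$ to extract $\sigma_{\min}(A)$ from the linear term. In the consistent case $r^*=0$ and the entire analysis collapses to a single contraction estimate with no noise term.
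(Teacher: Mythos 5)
Your proposal is correct and follows essentially the same route as the paper's proof: the same one-step expansion, the same use of $A^\top r^*=0$ and $r^k-r^*\in\mathrm{Range}(A)$ to extract the contraction factor, and the same unrolling into a geometric series. The only cosmetic difference is that you bound the cross term $u^\top\mathbb{E}[M^\top M]r^*$ via the PSD inequality $2u^\top G r^*\le u^\top Gu+(r^*)^\top Gr^*$, whereas the paper applies $\|a+b\|_2^2\le 2\|a\|_2^2+2\|b\|_2^2$ to $\|M((r^k-r^*)+r^*)\|_2^2$ before taking expectations — these yield identical constants.
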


Next, we consider the convergence of $x^k-x^0_*$. Particularly, we will consider the convergence of $\left\|\mathbb{E}\left[x^k-x^0_*\right]\right\|^2_2$.
\begin{theorem}\label{THMnm2}
Let $x^0\in\mathbb{R}^n$ be an arbitrary initial vector and $x^0_*=A^{\dagger}b+(I-A^\dagger A)x^0$.
The stepsize parameter $0<\alpha\leq \frac{\|A\|^2_F}{\sigma_{\max}^2(A)}$.
  Then the iteration sequence of $\{x^k\}_{k=1}^{\infty}$ in Algorithm \ref{PFR1} satisfies
  $$
  \left\|\mathbb{E}\left[x^k-x^0_*\right]\right\|^2_2\leq\left(1-\alpha\frac{\sigma_{\min}^2(A)}{\|A\|^2_F}
\right)^{2k}\left\|x^0-x^0_*\right\|^2_2.
  $$
\end{theorem}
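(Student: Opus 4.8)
The plan is to pass to expectations, reduce the claim to a deterministic linear recursion, and then control the powers of a single symmetric matrix on the row space of $A$.

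First I would set $\bar{x}^k:=\mathbb{E}[x^k]$ and observe that $x^0_*$ is deterministic (it depends only on the fixed data $A,b,x^0$), so $\mathbb{E}[x^k-x^0_*]=\bar{x}^k-x^0_*$. Since the sample $(S_1,S_2,T_1,T_2)$ drawn at step $k$ is independent of $x^k$ (which depends only on the samples from steps $0,\dots,k-1$), conditioning and invoking Assumption \ref{assumption1} give the Landweber update recorded just after \eqref{dsbi1}, namely $\mathbb{E}[x^{k+1}\mid x^k]=x^k-\frac{\alpha}{\|A\|_F^2}A^\top(Ax^k-b)$. Taking expectations once more and using linearity yields the closed recursion
$$
\bar{x}^{k+1}=\bar{x}^k-\frac{\alpha}{\|A\|_F^2}A^\top(A\bar{x}^k-b),\qquad \bar{x}^0=x^0.
$$

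Second, I would verify that $x^0_*$ is a fixed point of this recursion, i.e. that $A^\top A x^0_*=A^\top b$. Using the Moore--Penrose identities $A^\top A A^\dagger=A^\top$ and $AA^\dagger A=A$ one checks that $A^\top A A^\dagger b=A^\top b$ and $A^\top A(I-A^\dagger A)x^0=0$, so $x^0_*$ solves the normal equations. Subtracting, the error $e^k:=\bar{x}^k-x^0_*$ obeys $e^{k+1}=Me^k$ with $M:=I-\frac{\alpha}{\|A\|_F^2}A^\top A$, and hence $e^k=M^k(x^0-x^0_*)$. The remaining work is to bound $\|M^k(x^0-x^0_*)\|_2$.

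Third --- and this is the crux --- I would place the initial error in the correct invariant subspace. A direct computation gives $x^0-x^0_*=A^\dagger A x^0-A^\dagger b\in\mbox{Range}(A^\top)$, the orthogonal complement of $\ker(A)$. Since $A^\top A$, and therefore $M$, leaves $\mbox{Range}(A^\top)$ invariant and acts there with eigenvalues $1-\frac{\alpha}{\|A\|_F^2}\sigma_i^2(A)$ for $i=1,\dots,r$, the zero singular values never contribute. Expanding $x^0-x^0_*$ in an orthonormal eigenbasis of $A^\top A$ restricted to $\mbox{Range}(A^\top)$ and using that $M$ is symmetric gives
$$
\|M^k(x^0-x^0_*)\|_2^2\le\Big(\max_{1\le i\le r}\big|1-\tfrac{\alpha}{\|A\|_F^2}\sigma_i^2(A)\big|\Big)^{2k}\|x^0-x^0_*\|_2^2.
$$
The main obstacle is pinning down that maximum, and this is exactly where the hypothesis $0<\alpha\le\|A\|_F^2/\sigma_{\max}^2(A)$ enters: it forces $1-\frac{\alpha}{\|A\|_F^2}\sigma_i^2(A)\ge0$ for every $i$ (because $\sigma_i(A)\le\sigma_{\max}(A)$), so all relevant eigenvalues are nonnegative and the largest in magnitude is attained at the smallest nonzero singular value, equalling $1-\frac{\alpha}{\|A\|_F^2}\sigma_{\min}^2(A)$. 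Without this upper bound the eigenvalue at $\sigma_{\max}(A)$ could turn negative and dominate in absolute value. Substituting this maximum back delivers the claimed estimate.
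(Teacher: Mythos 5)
Your proposal is correct and follows essentially the same route as the paper: the paper's Lemma \ref{lemma-62912} (with $\omega=0$) is exactly your recursion $e^{k+1}=Me^k$ written in the SVD coordinates $s^k=V^\top\mathbb{E}[x^k-x^0_*]$, and the paper likewise kills the kernel directions by observing $x^0-x^0_*=A^\dagger(Ax^0-b)\in\mbox{Range}(A^\top)$ before bounding the surviving eigenvalues by $1-\alpha\sigma_{\min}^2(A)/\|A\|_F^2$. Your explicit remark that the upper bound on $\alpha$ is what keeps all relevant eigenvalues nonnegative is exactly the role that hypothesis plays in the paper's argument as well.
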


\subsection{Convergence direction}

Inspired by the recent work of Steinerberger \cite{Ste20Ran},
this section aims to consider the convergence direction of Algorithm \ref{PFR1}.
The following result shows  different convergence rates of Algorithm \ref{PFR1} along different singular vectors
of $A$.

\begin{theorem}
\label{THMmain}
 Let $x^0\in\mathbb{R}^n$ be an arbitrary initial vector and $x^0_*=A^{\dagger}b+(I-A^\dagger A)x^0$.
 Suppose that $v_\ell$ is a (right) singular vector of $A$ associated to the singular
value $\sigma_{\ell}$. Then the iteration sequence $\{x_k\}_{k=1}^{\infty}$ in Algorithm \ref{PFR1} satisfies
$$
\mathbb{E}[\langle x^k-x^0_*,v_{\ell}\rangle]=\bigg(1-\alpha\frac{\sigma_{\ell}^2}{\|A\|^2_F}\bigg)^k\langle x^0-x^0_*,v_{\ell}\rangle.
$$
\end{theorem}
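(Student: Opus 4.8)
The plan is to track the scalar $\langle x^k - x^0_*, v_\ell\rangle$ and show that, in expectation, it contracts by exactly the factor $1-\alpha\sigma_\ell^2/\|A\|^2_F$ at each step; the identity then follows by induction. Writing $e^k := x^k - x^0_*$, I would first record two facts about the target $x^0_*$. Since $A(I-A^\dagger A)=0$, we have $Ax^0_* = AA^\dagger b$, so $x^0_*$ satisfies the normal equations $A^\top A x^0_* = A^\top b$; consequently the residual $r^*:=Ax^0_*-b$ obeys $A^\top r^* = A^\top A x^0_* - A^\top b = 0$. I would also rewrite the current residual as $Ax^k-b = Ae^k + r^*$.

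Next I would subtract $x^0_*$ from the update \eqref{dsbi1} to obtain
$$e^{k+1} = e^k - \alpha T_1T_2^\top A^\top S_1 S_2^\top(Ae^k + r^*),$$
and take the conditional expectation given $x^k$. Since the matrix pair is sampled independently of $x^k$ and satisfies Assumption \ref{assumption1}, the random factor $T_1T_2^\top A^\top S_1S_2^\top$ may be replaced by its mean $A^\top/\|A\|^2_F$, yielding
$$\mathbb{E}[e^{k+1}\mid x^k] = e^k - \frac{\alpha}{\|A\|^2_F}A^\top(Ae^k + r^*) = \Big(I - \frac{\alpha}{\|A\|^2_F}A^\top A\Big)e^k,$$
where the $r^*$ term drops out because $A^\top r^* = 0$.

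The third step is to take the inner product with $v_\ell$. Because $A^\top A$ is symmetric and $v_\ell$ is a right singular vector, $A^\top A v_\ell = \sigma_\ell^2 v_\ell$, so transferring the operator onto $v_\ell$ gives
$$\mathbb{E}[\langle e^{k+1}, v_\ell\rangle \mid x^k] = \Big\langle e^k,\, \big(I - \tfrac{\alpha}{\|A\|^2_F}A^\top A\big)v_\ell\Big\rangle = \Big(1 - \frac{\alpha\sigma_\ell^2}{\|A\|^2_F}\Big)\langle e^k, v_\ell\rangle.$$
Applying the tower property removes the conditioning and produces a one-step recursion for $\mathbb{E}[\langle e^k, v_\ell\rangle]$; unrolling it $k$ times (the base case $k=0$ needs no expectation, as $e^0$ is deterministic) yields the claimed formula.

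There is essentially no hard step here: the whole argument is linear, so no variance or second-moment estimate enters and the contraction factor is \emph{exact} rather than a bound. The only point needing care is the bookkeeping that isolates the deterministic mean $A^\top/\|A\|^2_F$ via Assumption \ref{assumption1} together with the identity $A^\top r^* = 0$; once these are in place, the singular-vector relation does the rest.
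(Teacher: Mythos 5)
Your proposal is correct and follows essentially the same route as the paper: the paper's proof invokes Lemma \ref{lemma-62912} (with $\omega=0$), which performs exactly your computation --- take the conditional expectation of the error update, replace the random matrix by its mean $A^\top/\|A\|^2_F$ via Assumption \ref{assumption1}, use $A^\top A x^0_* = A^\top b$ to kill the residual term, and diagonalize $I-\alpha A^\top A/\|A\|^2_F$ in the right singular basis --- whereas you simply project onto the single vector $v_\ell$ instead of multiplying by all of $V^\top$. If anything your writeup is cleaner, since the recursion displayed in the paper's own proof of this theorem contains a garbled contraction factor that does not match the stated rate, while your derivation produces $1-\alpha\sigma_\ell^2/\|A\|^2_F$ exactly.
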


This exhibits that Algorithm \ref{PFR1} decays exponentially at different rates depending on the singular values and accounts for the typical \emph{semiconvergence} phenomenon.
That is, the residual $ \|Ax^{k}-b\|_2^2 $ decays faster at the beginning, but then gradually stagnates.
Recently, such semiconvergence phenomenon has been studied in the literature for randomized methods.
In \cite{Ste20Ran}, Steinerberger studied semiconvergence phenomenon for the randomized Kacazmarz method.
In \cite{Wu21}, Wu and Xiang exploited  the semiconvergence phenomenon for the sketch-and-project method \cite{Gow15},
where they generalized the study in \cite{Jia17} and split the total error into the low- and high-frequency solution spaces. Very recently,  the semiconvergence phenomenon has also been studied for randomized DR method in \cite{HSXDR2022}.
%

\section{Momentum acceleration}

In this section, we provide the momentum induced PFR method, i.e. the mPFR method for solving a linear system.
The method is formally described in Algorithm \ref{mPDFR}.

\begin{algorithm}[htpb]
\caption{The pseudoinverse-free randomized algorithm with momentum (mPFR) \label{mPDFR}}
\begin{algorithmic}
\Require
Distribution $\mathcal{D}$ from which to sample matrices, $A\in \mathbb{R}^{m\times n}$, $b\in \mathbb{R}^m$,  stepsize/relaxation parameter $\alpha>0$, the heavy ball/momentum parameter $\omega$, $k=1$, and initial vectors $x^1,x^0\in \mathbb{R}^{n}$.
\begin{enumerate}
\item[1:] Randomly select a matrix pair $(S_1,S_2,T_1,T_2)$ from the distribution $\mathcal{D}$.
\item[2:] Update
$$x^{k+1}=x^{k}-\alpha T_1T_2^\top A^\top S_1S_2^\top(Ax^{k}-b)+\omega(x^k-x^{k-1}).$$
\item[3:] If the stopping rule is satisfied, stop and go to output. Otherwise, set $k=k+1$ and return to Step $1$.
\end{enumerate}

\Ensure
  The approximate solution.
\end{algorithmic}
\end{algorithm}

Under Assumption \ref{assumption1}, by step $2$ in Algorithm \ref{mPDFR} we have
$$
\mathbb{E}[x^k|x^{k-1}]=x^{k-1}-\frac{\alpha}{\|A\|^2_F} A^\top(Ax^{k-1}-b)+\omega(x^k-x^{k-1}),
$$
which can be viewed as the momentum variant of the Landweber iteration.
In the rest of this section, we will study the convergence properties of Algorithm \ref{mPDFR}.  We note that the matrix pair $(S_1,S_2,T_1,T_2)$ selected from the distribution $\mathcal{D}$ satisfies Assumption \ref{assumption1}.

\subsection{Convergence of iterates: Linear rate}
In this subsection, we study the convergence rate of the quantity $\mathbb{E}[\|r^k-r^*\|^2_2]$ for Algorithm \ref{mPDFR}. We show that the method enjoys a global linear convergence rate  for various types of linear systems  (consistent or inconsistent, full rank or rank-deficient).

\begin{theorem}\label{THMr-in629}
Let $x_{LS}^*$ be the least-squares or the least-norm least-squares solution of the linear system $Ax=b$ and the initial vectors $x^1=x^0\in\mathbb{R}^n$.  Denote by $r^*=Ax_{LS}^*-b$, $r^k=Ax^k-b$, and
$$\beta=\big\|\mathbb{E}[ S_2S_1^\top AT_2T_1^\top A^\top A T_1T_2^\top A^\top S_1S_2^\top ]\big\|_2.$$
Assume $0<\alpha<\frac{\sigma^2_{\min}(A)}{\beta\|A\|^2_F}$, $\omega\geq0$ and
that the expressions
$$
\gamma_1=1+3\omega+2\omega^2-(2\alpha+\alpha\omega)\frac{\sigma^2_{\min}(A)}{\|A\|^2_F}+2\alpha^2\beta
\
\mbox{and}
\
\gamma_2=2\omega^2+\omega+\omega\alpha\frac{\sigma^2_{\max}(A)}{\|A\|^2_F}
$$
satisfy $\gamma_1+\gamma_2<1$.
Then the iteration sequence of residuals $\{r^k\}_{k=1}^{\infty}$ generated by Algorithm \ref{mPDFR} satisfies
$$
\mathbb{E}[\|r^{k+1}-r^*\|^2_2]\leq q^k(1+\tau)\|r^0-r^*\|^2_2+\frac{2\alpha^2\beta(1-q^k)\|r^*\|^2_2}{1-q}, \ k\geq 0,
$$
where $$q=\left\{
           \begin{array}{ll}
             \frac{\gamma_1+\sqrt{\gamma_1^2+4\gamma_2}}{2}, & \hbox{if $\omega>0$;} \\
             \gamma_1, & \hbox{if $\omega=0$,}
           \end{array}
         \right.
\ \mbox{and} \ \tau=q-\gamma_1\geq 0.$$ Moreover,
$\gamma_1+\gamma_2\leq q<1$.
\end{theorem}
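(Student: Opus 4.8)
The plan is to reduce the analysis to a scalar two-step recurrence for the expected squared residual error and then solve it through its characteristic root. Throughout write $M:=AT_1T_2^\top A^\top S_1S_2^\top$ for the random matrix appearing in the update, so that multiplying the iteration in Algorithm~\ref{mPDFR} on the left by $A$ and subtracting $b$ turns it into the residual recursion $r^{k+1}=r^k-\alpha Mr^k+\omega(r^k-r^{k-1})$. The two facts that make everything work are the orthogonality relation $A^\top r^*=0$ (which holds because $r^*=AA^\dagger b-b$ and $A^\top AA^\dagger=A^\top$), together with its consequence that $\hat r^k:=r^k-r^*=A(x^k-x_{LS}^*)\in\mbox{Range}(A)$ is orthogonal to $r^*$.

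First I would set up the one-step identity for $\hat r^k$. Subtracting $r^*$ and writing $Mr^k=M\hat r^k+Mr^*$ gives $\hat r^{k+1}=\big[(1+\omega)\hat r^k-\omega\hat r^{k-1}\big]-\alpha Mr^k$, where the bracketed vector is deterministic given the history. Squaring and taking the conditional expectation $\mathbb{E}_k[\cdot]$ (the sample being independent of the past), I would invoke Assumption~\ref{assumption1} in the form $\mathbb{E}_k[M]=AA^\top/\|A\|^2_F=:W$. Because $Wr^*=AA^\top r^*/\|A\|^2_F=0$, the linear term collapses to $-2\alpha\big\langle(1+\omega)\hat r^k-\omega\hat r^{k-1},\,W\hat r^k\big\rangle$, while the quadratic term is controlled by $\mathbb{E}_k[\|Mr^k\|_2^2]\le\beta\|r^k\|_2^2\le 2\beta\big(\|\hat r^k\|_2^2+\|r^*\|_2^2\big)$, where $\beta=\|\mathbb{E}[M^\top M]\|_2$ as in the statement.

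The crux is the bookkeeping of the two cross terms so that the coefficients collapse exactly onto $\gamma_1$ and $\gamma_2$, and I expect this to be the main obstacle; the rest is mechanical. I would bound them by two different devices: the plain Young inequality $-2\omega(1+\omega)\langle\hat r^k,\hat r^{k-1}\rangle\le\omega(1+\omega)\big(\|\hat r^k\|_2^2+\|\hat r^{k-1}\|_2^2\big)$, and the $W$-weighted version $2\alpha\omega\langle\hat r^{k-1},W\hat r^k\rangle\le\alpha\omega\big(\langle\hat r^k,W\hat r^k\rangle+\langle\hat r^{k-1},W\hat r^{k-1}\rangle\big)$, valid since $W\succeq0$. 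After grouping the $\langle\hat r^k,W\hat r^k\rangle$ contributions (whose net coefficient is $-(2\alpha+\alpha\omega)$) one applies the lower Rayleigh bound $\langle\hat r^k,W\hat r^k\rangle\ge\frac{\sigma_{\min}^2(A)}{\|A\|^2_F}\|\hat r^k\|_2^2$, where the membership $\hat r^k\in\mbox{Range}(A)$ is essential since this bound fails on the kernel of $A^\top$; the isolated $\langle\hat r^{k-1},W\hat r^{k-1}\rangle$ term is routed into $\gamma_2$ via the upper bound $\frac{\sigma_{\max}^2(A)}{\|A\|^2_F}\|\hat r^{k-1}\|_2^2$. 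Collecting coefficients and taking full expectations yields $\mathbb{E}[\|\hat r^{k+1}\|_2^2]\le\gamma_1\mathbb{E}[\|\hat r^k\|_2^2]+\gamma_2\mathbb{E}[\|\hat r^{k-1}\|_2^2]+2\alpha^2\beta\|r^*\|_2^2$ with precisely the stated $\gamma_1,\gamma_2$.

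Finally I would solve the recurrence. Let $q$ be the nonnegative root of $q^2-\gamma_1q-\gamma_2=0$, i.e. the quantity in the statement, and set $\tau=q-\gamma_1$; the root identity gives $q\tau=\gamma_2$, so $\tau=\gamma_2/q\ge0$. Then, using $\gamma_1+\tau=q$ and $\gamma_2=q\tau$, one obtains $\mathbb{E}[\|\hat r^{k+1}\|_2^2]+\tau\mathbb{E}[\|\hat r^k\|_2^2]\le q\big(\mathbb{E}[\|\hat r^k\|_2^2]+\tau\mathbb{E}[\|\hat r^{k-1}\|_2^2]\big)+2\alpha^2\beta\|r^*\|_2^2$. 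Unrolling this geometric one-step inequality, using the initialization $x^1=x^0$ to evaluate the first term as $(1+\tau)\|r^0-r^*\|_2^2$, summing the geometric constant as $\frac{2\alpha^2\beta(1-q^k)}{1-q}\|r^*\|_2^2$, and discarding the nonnegative summand $\tau\mathbb{E}[\|\hat r^k\|_2^2]$ on the left gives the claimed estimate. The concluding inequalities $\gamma_1+\gamma_2\le q<1$ follow from the hypothesis $\gamma_1+\gamma_2<1$: evaluating $f(x)=x^2-\gamma_1x-\gamma_2$ at $x=1$ gives $f(1)=1-\gamma_1-\gamma_2>0$, so the larger root satisfies $q<1$, and then $q=\gamma_1+\gamma_2/q\ge\gamma_1+\gamma_2$ since $0<q<1$.
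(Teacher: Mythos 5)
Your proposal is correct and follows essentially the same route as the paper's proof: the same expansion of $\|r^{k+1}-r^*\|_2^2$, the same use of $A^\top r^*=0$ and $r^k-r^*\in\mbox{Range}(A)$ for the Rayleigh bounds, the same Young-type estimates on the two cross terms and the factor-$2$ bound on $\mathbb{E}[\|AT_1T_2^\top A^\top S_1S_2^\top r^k\|_2^2]$, leading to the identical two-step recurrence with coefficients $\gamma_1,\gamma_2$ and constant $2\alpha^2\beta\|r^*\|_2^2$. Your telescoping via $\gamma_1+\tau=q$ and $q\tau=\gamma_2$ is precisely the paper's Lemma \ref{lemma-key629}.
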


Let us explain how to  choose the parameters $\alpha$ and $\omega$ such that $\gamma_1+\gamma_2<1$ is satisfied in Theorem \ref{THMr-in629}. Indeed, let
$$
\tau_1:=4+\alpha\frac{\sigma_{\max}^2(A)}{\|A\|^2_F}-\alpha\frac{\sigma_{\min}^2(A)}{\|A\|^2_F}
$$
and
$$
\tau_2:=-2\alpha^2\beta+2\alpha\frac{\sigma_{\min}^2(A)}{\|A\|^2_F}.
$$
If we choose $0<\alpha<\frac{\sigma^2_{\min}(A)}{\beta\|A\|^2_F}$, then $\tau_2>0$ and the condition $\gamma_1+\gamma_2<1$ now is satisfied for all
$$
0\leq \omega<\frac{1}{8}\bigg(\sqrt{\tau_1^2+16\tau_2}-\tau_1\bigg).
$$
Next, we compare the convergence rates  obtained in Theorems \ref{THMnm2} and \ref{THMr-in629}.
From the definition of $\gamma_1$ and $\gamma_2$, we know that convergence rate $q(\omega)$  in Theorem \ref{THMr-in629} can be viewed as a function of $\omega$. Note that since $\omega\geq0$, we have
$$
\begin{array}{ll}
q(\omega)&\geq\gamma_1+\gamma_2\\
&=4\omega^2+\tau_1\omega-\tau_2+1
\\
&\geq 1-\tau_2
\\
&=1+2\alpha^2\beta-2\alpha\frac{\sigma_{\min}^2(A)}{\|A\|^2_F}=q(0).
\end{array}
$$
Clearly, the lower bound on $q$ is an increasing function of $\omega$. Also, for any $\omega$ the rate is
always inferior to that of Algorithm \ref{PFR1} in Theorem \ref{THMr-in}.

\subsection{Accelerated linear rate for expected iterates}
In this subsection, we study the convergence of the quantity $\left\|\mathbb{E}\left[x^k-x_*^0\right]\right\|_2^2$ to zero for Algorithm \ref{mPDFR}. We show that by a  proper combination of the relaxation
parameter $\alpha$ and the momentum parameter $\omega$, Algorithm \ref{mPDFR} enjoys an accelerated linear convergence rate in mean.

\begin{theorem}\label{THMfm2}
Suppose that $x^1=x^0\in\mathbb{R}^n$ are arbitrary initial vectors and $x^0_*=A^{\dagger}b+(I-A^\dagger A)x^0$.
 Let $\{x^k\}_{k=1}^{\infty}$ be the iteration sequence in Algorithm \ref{mPDFR}. Assume that the stepsize parameter
$0<\alpha\leq\frac{\|A\|^2_F}{\sigma_{\max}^2(A)}$
 and the momentum parameter
$$\left(1-\sqrt{\alpha\sigma_{\min}^2(A)/\|A\|^2_F}\right)^2<\omega<1.$$
Then there exists a constant
$c>0$ such that for all $k\geq 0$ we have
$$
\left\|\mathbb{E}\left[x^k-x_*^0\right]\right\|_2^2\leq \omega^k c.
$$
\end{theorem}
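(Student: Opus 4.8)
The plan is to reduce the stochastic recursion to a deterministic second-order linear recurrence for the expected error and then analyze its characteristic roots mode by mode. First I would take expectations in the update of Algorithm \ref{mPDFR}. Since $(S_1,S_2,T_1,T_2)$ is drawn independently of the history, Assumption \ref{assumption1} gives $\mathbb{E}[T_1T_2^\top A^\top S_1S_2^\top]=A^\top/\|A\|_F^2$, so writing $y^k:=\mathbb{E}[x^k]-x^0_*$ and using $A^\top(Ax^0_*-b)=0$ (which holds because $A^\top A x^0_*=A^\top A A^\dagger b=A^\top b$) the data term cancels and I obtain the homogeneous recurrence
\[
y^{k+1}=\Big((1+\omega)I-\tfrac{\alpha}{\|A\|_F^2}A^\top A\Big)y^k-\omega\,y^{k-1},
\]
with $y^1=y^0=x^0-x^0_*$ because $x^1=x^0$.

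Next I would exploit the geometry of the initial error. Since $y^0=x^0-x^0_*=A^\dagger(Ax^0-b)\in\mathrm{Range}(A^\top)$ and the map $y\mapsto((1+\omega)I-\tfrac{\alpha}{\|A\|_F^2}A^\top A)y$ leaves $\mathrm{Range}(A^\top)$ invariant, every $y^k$ stays in the row space of $A$. Diagonalising $A^\top A$ through the SVD $A=U\Sigma V^\top$, I would expand $y^k$ in the orthonormal right singular vectors $v_\ell$ with $\sigma_\ell>0$ and set $c_\ell^k:=\langle y^k,v_\ell\rangle$. Each coordinate then obeys the scalar recurrence $c_\ell^{k+1}=(\lambda_\ell+\omega)c_\ell^k-\omega c_\ell^{k-1}$ with $\lambda_\ell:=1-\alpha\sigma_\ell^2/\|A\|_F^2$, whose characteristic equation is $t^2-(\lambda_\ell+\omega)t+\omega=0$.

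The heart of the argument is the root analysis. The condition $0<\alpha\le\|A\|_F^2/\sigma_{\max}^2(A)$ forces $0\le\lambda_\ell<1$, while the momentum bound $(1-\sqrt{\alpha\sigma_{\min}^2(A)/\|A\|_F^2})^2<\omega<1$ is exactly what certifies $(\lambda_\ell+\omega)^2<4\omega$ for every nonzero $\sigma_\ell$: it rearranges to $(1-\sqrt\omega)^2<\alpha\sigma_{\min}^2(A)/\|A\|_F^2\le 1-\lambda_\ell$, i.e. $\lambda_\ell+\omega<2\sqrt\omega$, and $\lambda_\ell+\omega>0>-2\sqrt\omega$ is automatic. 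Hence the discriminant is strictly negative and the two roots are complex conjugates $\sqrt\omega\,e^{\pm i\theta_\ell}$ of modulus exactly $\sqrt\omega$ by Vieta's product formula. Solving the recurrence gives $c_\ell^k=\omega^{k/2}(A_\ell\cos k\theta_\ell+B_\ell\sin k\theta_\ell)$; imposing $c_\ell^0=c_\ell^1$ fixes $A_\ell=c_\ell^0$ and $B_\ell=c_\ell^0(\omega^{-1/2}-\cos\theta_\ell)/\sin\theta_\ell$, so $|c_\ell^k|\le\omega^{k/2}M_\ell$ with $M_\ell=|c_\ell^0|(1+|\omega^{-1/2}-\cos\theta_\ell|/\sin\theta_\ell)$. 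Summing the orthogonal components yields $\|\mathbb{E}[x^k-x^0_*]\|_2^2=\sum_\ell|c_\ell^k|^2\le\omega^k\sum_\ell M_\ell^2=:\omega^k c$.

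I expect the main obstacle to be controlling the constant $c$, equivalently the amplitudes $M_\ell$: the factor $1/\sin\theta_\ell$ blows up as $\theta_\ell\to0$, i.e. as a mode approaches the boundary of the complex-root region. The saving grace is that the strict inequality in the momentum condition keeps the discriminant bounded away from zero for the extremal value $\sigma_{\min}$, and since $A$ has only finitely many distinct singular values each $\theta_\ell$ is a fixed positive number; hence $\max_\ell(1+|\omega^{-1/2}-\cos\theta_\ell|/\sin\theta_\ell)$ is a finite constant and $c\le(\max_\ell(1+|\omega^{-1/2}-\cos\theta_\ell|/\sin\theta_\ell))^2\,\|x^0-x^0_*\|_2^2<\infty$, which is all the statement requires. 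A tidy alternative I would keep in reserve is to phrase steps two and three through the companion block matrix $M=\bigl(\begin{smallmatrix}(1+\omega)I-\frac{\alpha}{\|A\|_F^2}A^\top A & -\omega I\\ I & 0\end{smallmatrix}\bigr)$ and invoke diagonalisability on $\mathrm{Range}(A^\top)\oplus\mathrm{Range}(A^\top)$, where every eigenvalue has modulus $\sqrt\omega$, to read off $\|y^k\|_2\le C\omega^{k/2}\|y^0\|_2$ directly.
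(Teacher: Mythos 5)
Your proposal is correct and follows essentially the same route as the paper's proof: take expectations to obtain the deterministic second-order recurrence $y^{k+1}=((1+\omega)I-\tfrac{\alpha}{\|A\|_F^2}A^\top A)y^k-\omega y^{k-1}$, diagonalize via the SVD, and show that under the stated conditions on $\alpha$ and $\omega$ each scalar mode has a negative discriminant so the characteristic roots are complex of modulus $\sqrt{\omega}$. The only differences are presentational — you dispose of the zero singular-value modes by invariance of $\mathrm{Range}(A^\top)$ where the paper shows those coordinates of the initial error vanish, and you track the amplitude constants more explicitly — but the argument is the same.
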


\begin{remark}
	Note that the convergence factor in Theorem  \ref{THMfm2} is precisely equal to the value of the momentum parameter $\omega$. Theorem \ref{THMnm2} shows that Algorithm \ref{PFR1} (without momentum)
	converges with iteration complexity
	$$
	O\left( \log(\varepsilon^{-1})\alpha^{-1}\|A\|^2_F/\sigma_{\min}^2(A)\right).
	$$
	In contrast, based on Theorem \ref{THMfm2} we have,
	for
$$\omega=\left(1-\sqrt{0.99\alpha\sigma_{\min}^2(A)/\|A\|^2_F}\right)^2,$$
the iteration complexity of Algorithm \ref{mPDFR} is
	$$
	O\left(\log (\varepsilon^{-1})\sqrt{\alpha^{-1}\|A\|^2_F/\sigma_{\min}^2(A)} \right),
	$$
	which is a quadratic improvement on the above result.
\end{remark}

\subsection{Convergence analysis: Special cases}
In this subsection, we study the convergence results of Algorithm \ref{mPDFR} (Algorithm \ref{PFR1}) for some special cases where the coefficient matrix $A$ is full column rank or the sampling matrices $T_1,T_2,S_1,S_2$ are
scalar matrices.

\subsubsection{The full column rank case}
If the matrix $A$ is full column rank, then we can study the convergence rate of the quantity $\mathbb{E}\|x^k-A^{\dagger}b\|^2_2$. We show that for a range of stepsize parameters $\alpha>0$ and
momentum terms $\omega\geq 0$, the method enjoys a global linear convergence rate.

\begin{theorem}
\label{THMx}
Suppose  that $A$ has full column rank.
Let $x^1=x^0\in\mathbb{R}^n$ be  the initial vectors and denote
\begin{equation}\label{beta1}
\beta_1=\|\mathbb{E}[ S_2S_1^\top AT_2T_1^\top T_1T_2^\top A^\top S_1S_2^\top ]\|_2.
\end{equation}
Assume $0<\alpha<\frac{1}{\beta_1\|A\|^2_F}$, $\omega\geq0$ and
that the expressions
$$
\gamma_1=1+3\omega+2\omega^2-(2\alpha+\alpha\omega-2\alpha^2\beta_1\|A\|^2_F)\frac{\sigma^2_{\min}(A)}{\|A\|^2_F}
\
\mbox{and}
\
\gamma_2=2\omega^2+\omega+\omega\alpha\frac{\sigma^2_{\max}(A)}{\|A\|^2_F}
$$
satisfy $\gamma_1+\gamma_2<1$.
Then the iteration sequence  $\{x^k\}_{k=1}^{\infty}$ generated by Algorithm \ref{mPDFR} satisfies
$$
\mathbb{E}[\|x^{k+1}-A^{\dagger}b\|^2_2]\leq q^k(1+\tau)\|x^0-A^{\dagger}b\|^2_2+\frac{2\alpha^2\beta_1(1-q^k)\|AA^{\dagger}b-b\|^2_2}{1-q},\ k\geq0,
$$
where $$q=\left\{
           \begin{array}{ll}
             \frac{\gamma_1+\sqrt{\gamma_1^2+4\gamma_2}}{2}, & \hbox{if $\omega>0$;} \\
             \gamma_1, & \hbox{if $\omega=0$,}
           \end{array}
         \right.
\ \mbox{and} \ \tau=q-\gamma_1\geq 0.$$ Moreover,
$\gamma_1+\gamma_2\leq q<1$.
\end{theorem}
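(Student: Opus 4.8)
The plan is to mimic the argument behind Theorem \ref{THMr-in629}, but to work directly in the solution space rather than in the residual space, exploiting that full column rank forces $A^\dagger A = I$, so that $A^\dagger b$ is the \emph{unique} least-squares solution. I would set $x^* = A^\dagger b$, $e^k = x^k - x^*$, $r^* = Ax^* - b = AA^\dagger b - b$, and $W = T_1 T_2^\top A^\top S_1 S_2^\top$, so that Assumption \ref{assumption1} reads $\mathbb{E}[W] = A^\top/\|A\|_F^2$ and, by \eqref{beta1}, $\beta_1 = \|\mathbb{E}[W^\top W]\|_2$. The first step is to record the two facts that drive the whole estimate: since $A$ has full column rank, $A^\top r^* = A^\top(AA^\dagger b - b) = 0$, and for every $e$ one has $\sigma_{\min}^2(A)\|e\|_2^2 \le \|Ae\|_2^2 \le \sigma_{\max}^2(A)\|e\|_2^2$. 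Substituting $Ax^k - b = Ae^k + r^*$ into the update of Algorithm \ref{mPDFR} yields the second-order error recursion
$$e^{k+1} = (1+\omega)e^k - \omega e^{k-1} - \alpha W(Ae^k + r^*).$$

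Next I would square and take the conditional expectation $\mathbb{E}[\cdot\,|\,x^k, x^{k-1}]$. Writing $p^k = (1+\omega)e^k - \omega e^{k-1}$, the cross term is linear in $W$, so $\mathbb{E}[W] = A^\top/\|A\|_F^2$ together with $A^\top r^* = 0$ turns it into $-\frac{2\alpha}{\|A\|_F^2}\langle Ap^k, Ae^k\rangle$, while the quadratic term is controlled by $\beta_1$ through $\mathbb{E}[\|W(Ae^k + r^*)\|_2^2] \le \beta_1\|Ae^k + r^*\|_2^2$. For the latter I would use the crude splitting $\|Ae^k + r^*\|_2^2 \le 2\|Ae^k\|_2^2 + 2\|r^*\|_2^2$ (this is where the factor $2$ in both the $\sigma_{\min}^2$ coefficient of $\gamma_1$ and in the constant term originates). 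This produces
$$\mathbb{E}[\|e^{k+1}\|_2^2\,|\,x^k,x^{k-1}] \le \|p^k\|_2^2 - \frac{2\alpha}{\|A\|_F^2}\big[(1+\omega)\|Ae^k\|_2^2 - \omega\langle Ae^{k-1},Ae^k\rangle\big] + 2\alpha^2\beta_1\|Ae^k\|_2^2 + 2\alpha^2\beta_1\|r^*\|_2^2.$$

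The third step is bookkeeping. Expanding $\|p^k\|_2^2$ and applying $\pm 2\langle a,b\rangle \le \|a\|_2^2 + \|b\|_2^2$ to the two indefinite inner products $\langle e^k, e^{k-1}\rangle$ and $\langle Ae^{k-1}, Ae^k\rangle$ collects everything into multiples of $\|e^k\|_2^2$, $\|e^{k-1}\|_2^2$, $\|Ae^k\|_2^2$ and $\|Ae^{k-1}\|_2^2$. The coefficient of $\|Ae^k\|_2^2$ works out to $-\frac{2\alpha+\alpha\omega}{\|A\|_F^2} + 2\alpha^2\beta_1$, which the hypothesis $\alpha < \frac{1}{\beta_1\|A\|_F^2}$ renders negative; I therefore replace $\|Ae^k\|_2^2$ by its \emph{lower} bound $\sigma_{\min}^2(A)\|e^k\|_2^2$, whereas the positive coefficient $\frac{\alpha\omega}{\|A\|_F^2}$ of $\|Ae^{k-1}\|_2^2$ calls for the \emph{upper} bound $\sigma_{\max}^2(A)\|e^{k-1}\|_2^2$. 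Taking total expectations and setting $a_k = \mathbb{E}[\|e^k\|_2^2]$ gives exactly $a_{k+1} \le \gamma_1 a_k + \gamma_2 a_{k-1} + 2\alpha^2\beta_1\|r^*\|_2^2$ with $\gamma_1,\gamma_2$ as stated: the $1+3\omega+2\omega^2$ in $\gamma_1$ and the $2\omega^2+\omega$ in $\gamma_2$ come from $\|p^k\|_2^2$, the $\sigma_{\min}^2$ part of $\gamma_1$ from the $\|Ae^k\|_2^2$ term, and the $\sigma_{\max}^2$ part of $\gamma_2$ from the $\|Ae^{k-1}\|_2^2$ term.

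Finally, since this recursion has the identical form to the one in Theorem \ref{THMr-in629}, I would close the argument in the same way. With $\gamma_1,\gamma_2 \ge 0$ and $\gamma_1+\gamma_2<1$, the characteristic equation $t^2 = \gamma_1 t + \gamma_2$ has largest root $q = \frac{\gamma_1+\sqrt{\gamma_1^2+4\gamma_2}}{2}$; here $q<1$ is equivalent to $\gamma_1+\gamma_2<1$, and $\gamma_1+\gamma_2 \le q$ follows from $\gamma_2 = q(q-\gamma_1) = q\tau$ together with $q-(\gamma_1+\gamma_2)=\tau(1-q)\ge 0$, where $\tau=q-\gamma_1\ge0$. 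An induction on $k$ (using $x^1=x^0$, hence $a_1=a_0$) then gives $a_{k+1} \le q^k(1+\tau)a_0 + 2\alpha^2\beta_1\|r^*\|_2^2\sum_{j=0}^{k-1}q^j$, which is the claimed bound once $\sum_{j=0}^{k-1}q^j = \frac{1-q^k}{1-q}$ and $\|r^*\|_2 = \|AA^\dagger b - b\|_2$ are substituted. The one genuinely delicate point, and the main obstacle, is the sign bookkeeping of the third step: the passage from $\|A\cdot\|_2^2$ to $\|\cdot\|_2^2$ must apply $\sigma_{\min}$ to the negative $\|Ae^k\|_2^2$ term and $\sigma_{\max}$ to the positive $\|Ae^{k-1}\|_2^2$ term, and the legitimacy of the former rests on verifying that $\alpha < 1/(\beta_1\|A\|_F^2)$ indeed makes the coefficient of $\|Ae^k\|_2^2$ negative; once the recursion is in hand, the remainder is routine and taken over verbatim from Theorem \ref{THMr-in629}.
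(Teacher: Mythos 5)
Your proposal is correct and follows essentially the same route as the paper's proof: the same use of Assumption \ref{assumption1} on the linear term, the same $\|u+v\|_2^2\le 2\|u\|_2^2+2\|v\|_2^2$ splitting that produces the factor $2\alpha^2\beta_1$, the same $\sigma_{\min}/\sigma_{\max}$ sign bookkeeping enabled by full column rank and $\alpha<1/(\beta_1\|A\|_F^2)$, and the same second-order recursion lemma (Lemma \ref{lemma-key629}) to close the argument. The only differences are cosmetic (you expand the full error recursion at once rather than via the paper's three-term decomposition, and you apply $\beta_1$ before splitting rather than after), and all coefficients check out.
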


Furthermore, if the linear systems are consistent, then we can show that the quantity $\mathbb{E}\|x^k-A^{\dagger}b\|^2_2$ converges to zero linearly. Besides, one can also choose a larger stepsize.

\begin{theorem}
\label{THMcfull}
Consider a consistent system $Ax = b$ with $A$ being full column rank.
Let $x^1=x^0\in\mathbb{R}^n$ be  the initial vectors and  $\beta_1$ is defined as \eqref{beta1}.
Assume $0<\alpha<\frac{2}{\beta_1\|A\|^2_F}$, $\omega\geq0$ and
that the expressions
$$
\gamma_1=1+3\omega+2\omega^2
-\left(2\alpha+\alpha\omega-\alpha^2\beta_1\|A\|^2_F\right)\frac{\sigma_{\min}^2(A)}{\|A\|^2_F}
\
\mbox{and}
\
\gamma_2=2\omega^2+\omega+\omega\alpha\frac{\sigma^2_{\max}(A)}{\|A\|^2_F}
$$
satisfy $\gamma_1+\gamma_2<1$.
Then the iteration sequence of $\{x^k\}_{k=1}^{\infty}$ in Algorithm \ref{mPDFR} satisfies
$$
\mathbb{E}[\|x^{k+1}-A^{\dagger}b\|^2_2]\leq q^k(1+\tau)\|x^0-A^{\dagger}b\|^2_2,\ k\geq 0,
$$
where $$q=\left\{
           \begin{array}{ll}
             \frac{\gamma_1+\sqrt{\gamma_1^2+4\gamma_2}}{2}, & \hbox{if $\omega>0$;} \\
             \gamma_1, & \hbox{if $\omega=0$,}
           \end{array}
         \right.
\ \mbox{and} \ \tau=q-\gamma_1\geq 0.$$ Moreover,
$\gamma_1+\gamma_2\leq q<1$.
\end{theorem}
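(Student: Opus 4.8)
The plan is to run the same heavy-ball energy analysis used for Theorems \ref{THMr-in629} and \ref{THMx}, but to cash in consistency to kill the inhomogeneous residual term and to open up the wider stepsize window. Since $A$ has full column rank and $Ax=b$ is consistent, $x^\ast:=A^\dagger b$ is the unique solution and $Ax^\ast=b$, so the solution residual $r^\ast=Ax^\ast-b$ vanishes. Writing $e^k:=x^k-A^\dagger b$ and using the exact identity $Ax^k-b=Ae^k$, the mPFR update of Algorithm \ref{mPDFR} becomes the \emph{homogeneous} error recursion
$$
e^{k+1}=e^k-\alpha M e^k+\omega\,(e^k-e^{k-1}),\qquad M:=T_1T_2^\top A^\top S_1S_2^\top A .
$$
First I would expand $\|e^{k+1}\|_2^2$ into $\|e^k\|_2^2$, $\alpha^2\|Me^k\|_2^2$, $\omega^2\|e^k-e^{k-1}\|_2^2$, and the three cross terms, then take the expectation conditional on $x^k,x^{k-1}$ (only $M$ is random).

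The conditional expectation is evaluated via Assumption \ref{assumption1}, which gives $\mathbb{E}[M]=A^\top A/\|A\|^2_F$, together with the definition \eqref{beta1} of $\beta_1$: putting $y=Ae^k$ and recognising $\mathbb{E}[S_2S_1^\top A T_2T_1^\top T_1T_2^\top A^\top S_1S_2^\top]$ yields $\mathbb{E}[\|Me^k\|_2^2]\le\beta_1\|Ae^k\|_2^2$. The linear-in-$M$ terms produce $-2\alpha\,\|Ae^k\|_2^2/\|A\|^2_F$ and $-2\alpha\omega\,\langle Ae^k,A(e^k-e^{k-1})\rangle/\|A\|^2_F$. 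Collecting the two genuinely favorable contributions gives the coefficient $\big(\alpha^2\beta_1-2\alpha/\|A\|^2_F\big)\|Ae^k\|_2^2$; this is exactly where consistency pays off, since the hypothesis $0<\alpha<2/(\beta_1\|A\|^2_F)$ makes this coefficient strictly negative, so I may substitute $\|Ae^k\|_2^2\ge\sigma_{\min}^2(A)\|e^k\|_2^2$ to obtain an upper bound. This single step is what produces the $\alpha^2\beta_1\|A\|^2_F$ term (rather than the $2\alpha^2\beta_1\|A\|^2_F$ of Theorem \ref{THMx}) in $\gamma_1$ and the doubled stepsize range.

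Next I would dispatch the momentum terms by elementary inequalities: $\|e^k-e^{k-1}\|_2^2\le2\|e^k\|_2^2+2\|e^{k-1}\|_2^2$ (giving $2\omega^2$ in both $\gamma_1,\gamma_2$); $2\langle e^k,e^k-e^{k-1}\rangle\le3\|e^k\|_2^2+\|e^{k-1}\|_2^2$ by Young (giving $3\omega$ and $\omega$); and for the sign-indefinite term I would write $\langle Ae^k,A(e^k-e^{k-1})\rangle=\|Ae^k\|_2^2-\langle Ae^k,Ae^{k-1}\rangle$, bound the inner product by Young, and then apply $\|Ae^k\|_2^2\ge\sigma_{\min}^2(A)\|e^k\|_2^2$ on the reinforcing negative part and $\|Ae^{k-1}\|_2^2\le\sigma_{\max}^2(A)\|e^{k-1}\|_2^2$ on the remaining positive part. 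Assembling all contributions yields precisely
$$
\mathbb{E}\big[\|e^{k+1}\|_2^2\mid x^k,x^{k-1}\big]\le\gamma_1\|e^k\|_2^2+\gamma_2\|e^{k-1}\|_2^2 ,
$$
with $\gamma_1,\gamma_2$ exactly as in the statement.

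Finally, taking total expectations and setting $F_k:=\mathbb{E}[\|e^k\|_2^2]$ reduces the claim to the scalar recursion $F_{k+1}\le\gamma_1F_k+\gamma_2F_{k-1}$ with $F_1=F_0$ (from $x^1=x^0$). I would then invoke the linear-recurrence argument already used for Theorem \ref{THMr-in629}: with $q$ the larger root of $t^2-\gamma_1t-\gamma_2=0$ (so $q^2=\gamma_1q+\gamma_2$) and $\tau=q-\gamma_1$, a short induction gives $F_{k+1}\le q^k(1+\tau)F_0$, the base and inductive steps collapsing to $\gamma_1\le q$ and $q^2-q\gamma_1=\gamma_2$; and $\gamma_1+\gamma_2\le q<1$ follows from $\gamma_1+\gamma_2<1$ by squaring. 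Because $r^\ast=0$, no additive inhomogeneous term survives, leaving the clean bound claimed. I expect the third step — forcing the cross terms to assemble into exactly $\gamma_1$ and $\gamma_2$ — to be the main obstacle, since the indefinite term $-2\alpha\omega\langle Ae^k,A(e^k-e^{k-1})\rangle/\|A\|^2_F$ must be split so that its negative part reinforces the $\sigma_{\min}^2(A)$ contraction in $\gamma_1$ while its positive part lands on $\sigma_{\max}^2(A)$ in $\gamma_2$.
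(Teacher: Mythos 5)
Your proposal is correct and follows essentially the same route as the paper: the paper likewise exploits consistency to write $Ax^k-b=A(x^k-A^\dagger b)$, bounds $\mathbb{E}[\|T_1T_2^\top A^\top S_1S_2^\top A e^k\|_2^2]\leq\beta_1\|Ae^k\|_2^2$ without the factor-of-two splitting, combines the $-2\alpha\|Ae^k\|_2^2/\|A\|_F^2$ and $+\alpha^2\beta_1\|Ae^k\|_2^2$ terms before invoking $\|Ae^k\|_2^2\geq\sigma_{\min}^2(A)\|e^k\|_2^2$ (valid by full column rank), splits the momentum cross term exactly as you describe, and closes with the same two-term linear recurrence lemma with $\zeta=0$. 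No gaps.
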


\subsubsection{The scalar matrices  case}

If  the sampling matrices $S_1,S_2$ or $T_1,T_2$ are scalar matrices, then  we can study
the convergence rate of the quantity $\mathbb{E}\|r^k-r^*\|^2_2$ or $\mathbb{E}\|x^k-A^{\dagger}b\|^2_2$  for Algorithm \ref{mPDFR} (Algorithm \ref{PFR1}).

\begin{theorem}\label{THMr}
Suppose that the sampling matrix $S_1=S_2=\sqrt{\rho} I$ and $\rho>0$. Let $x_{LS}^*$ be the least-squares or the least-norm least-squares solution of the linear system $Ax=b$ and the initial vectors $x^1=x^0\in\mathbb{R}^n$.
Denote by $r^*=Ax_{LS}^*-b$, $r^k=Ax^k-b$, and
$$\beta_2=\big\|\mathbb{E}[ T_2T_1^\top A^\top A T_1T_2^\top  ]\big\|_2.$$
Assume $0<\alpha<\frac{2}{\rho^2\beta_2\|A\|^2_F}$, $\omega\geq0$ and
that the expressions
$$
\gamma_1=1+3\omega+2\omega^2
-\left(2\alpha-\alpha^2\rho^2\beta_2\|A\|^2_F+\alpha\omega\right)\frac{\sigma_{\min}^2(A)}{\|A\|^2_F}
\
\mbox{and}
\
\gamma_2=2\omega^2+\omega+\omega\alpha\frac{\sigma^2_{\max}(A)}{\|A\|^2_F}
$$
satisfy $\gamma_1+\gamma_2<1$.
Then the iteration sequence of residuals $\{r^k\}_{k=1}^{\infty}$ generated by Algorithm \ref{mPDFR} satisfies
$$
\mathbb{E}[\|r^{k+1}-r^*\|^2_2]\leq q^k(1+\tau)\|r^0-r^*\|^2_2, \ k\geq 0,
$$
where $$q=\left\{
           \begin{array}{ll}
             \frac{\gamma_1+\sqrt{\gamma_1^2+4\gamma_2}}{2}, & \hbox{if $\omega>0$;} \\
             \gamma_1, & \hbox{if $\omega=0$,}
           \end{array}
         \right.
\ \mbox{and} \ \tau=q-\gamma_1\geq 0.$$ Moreover,
$\gamma_1+\gamma_2\leq q<1$.
\end{theorem}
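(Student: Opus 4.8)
The plan is to mirror the momentum analysis behind Theorem \ref{THMr-in629}, exploiting the simplification $S_1S_2^\top=\rho I$. First I would write the recursion for the shifted residual $\hat r^{k}:=r^k-r^*$. Substituting $S_1=S_2=\sqrt{\rho}\,I$ into Step~2 of Algorithm \ref{mPDFR} and left-multiplying by $A$ gives $r^{k+1}=r^k-\alpha\rho\,AT_1T_2^\top A^\top r^k+\omega(r^k-r^{k-1})$. The decisive observation is that $A^\top r^*=A^\top(AA^\dagger b-b)=0$, so $A^\top r^k=A^\top\hat r^k$ and the recursion closes in $\hat r^k$ with no residual source term:
\[
\hat r^{k+1}=\big((1+\omega)I-\alpha\rho\,AT_1T_2^\top A^\top\big)\hat r^k-\omega\,\hat r^{k-1}.
\]
This is exactly why the final bound carries no additive $\|r^*\|_2^2$ term, in contrast to Theorem \ref{THMr-in629}. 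I would also record that $\hat r^k=A(x^k-x_{LS}^*)\in\mathrm{Range}(A)$, so that $\sigma_{\min}^2(A)\|\hat r^k\|_2^2\le\|A^\top\hat r^k\|_2^2\le\sigma_{\max}^2(A)\|\hat r^k\|_2^2$.

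Next I would take the conditional expectation of $\|\hat r^{k+1}\|_2^2$. Writing $u=\hat r^k$, $v=\hat r^{k-1}$, and $W=AT_1T_2^\top A^\top$, expansion of the square produces three groups: $\mathbb{E}\|((1+\omega)I-\alpha\rho W)u\|_2^2$, the cross term $-2\omega\langle\mathbb{E}[(1+\omega)I-\alpha\rho W]u,\,v\rangle$, and $\omega^2\|v\|_2^2$. Using the specialization of Assumption \ref{assumption1}, namely $\mathbb{E}[\alpha\rho W]=\tfrac{\alpha}{\|A\|_F^2}AA^\top$, together with the second-moment bound $\mathbb{E}\|Wu\|_2^2=(A^\top u)^\top\mathbb{E}[T_2T_1^\top A^\top AT_1T_2^\top](A^\top u)\le\beta_2\|A^\top u\|_2^2$, the first group is bounded above by $(1+\omega)^2\|u\|_2^2+\big(-\tfrac{2(1+\omega)\alpha}{\|A\|_F^2}+\alpha^2\rho^2\beta_2\big)\|A^\top u\|_2^2$. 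For the cross terms I would use $-2\langle u,v\rangle\le\|u\|_2^2+\|v\|_2^2$ and rewrite $\langle AA^\top u,v\rangle=\langle A^\top u,A^\top v\rangle$, splitting it via $2\langle A^\top u,A^\top v\rangle\le\|A^\top u\|_2^2+\|A^\top v\|_2^2$ and then $\|A^\top v\|_2^2\le\sigma_{\max}^2(A)\|v\|_2^2$.

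The crux is the aggregate coefficient of $\|A^\top u\|_2^2$, which after collecting the above equals $-\tfrac{(2+\omega)\alpha}{\|A\|_F^2}+\alpha^2\rho^2\beta_2$. The stepsize hypothesis $\alpha<\tfrac{2}{\rho^2\beta_2\|A\|_F^2}$ makes this coefficient nonpositive (since $\omega\ge0$), which is precisely what licenses replacing $\|A^\top u\|_2^2$ by its lower bound $\sigma_{\min}^2(A)\|u\|_2^2$; this single sign control is what yields the $\sigma_{\min}^2$-dependence and the enlarged stepsize range that distinguishes the scalar case from Theorem \ref{THMr-in629}. Collecting the coefficients of $\|u\|_2^2=\|\hat r^k\|_2^2$ and $\|v\|_2^2=\|\hat r^{k-1}\|_2^2$ then reproduces exactly the stated $\gamma_1$ and $\gamma_2$, giving the two-step recursion $\mathbb{E}\|\hat r^{k+1}\|_2^2\le\gamma_1\,\mathbb{E}\|\hat r^k\|_2^2+\gamma_2\,\mathbb{E}\|\hat r^{k-1}\|_2^2$.

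Finally, since $\gamma_1+\gamma_2<1$, I would invoke the same linear-recurrence argument used in the proof of Theorem \ref{THMr-in629}: the scalar recursion $F_{k+1}\le\gamma_1 F_k+\gamma_2 F_{k-1}$ with $F_k=\mathbb{E}\|\hat r^k\|_2^2$ and initialization $F_1=F_0$ (guaranteed by $x^1=x^0$) is dominated by the geometric sequence with ratio $q$, the larger root of $t^2=\gamma_1 t+\gamma_2$, yielding $F_{k+1}\le q^k(1+\tau)F_0$ with $\tau=q-\gamma_1\ge0$; the bounds $\gamma_1+\gamma_2\le q<1$ follow from elementary root estimates. I expect the main obstacle to be the bookkeeping in the third step---distributing the $\langle A^\top u,A^\top v\rangle$ cross term so that exactly $\tfrac{\omega\alpha\sigma_{\max}^2}{\|A\|_F^2}$ lands in $\gamma_2$ while the leftover $\tfrac{\omega\alpha}{\|A\|_F^2}\|A^\top u\|_2^2$ merges into the sign-controlled term feeding $\gamma_1$---rather than any conceptual difficulty, since the absence of the $r^*$ source term makes the scalar case structurally cleaner than Theorem \ref{THMr-in629}.
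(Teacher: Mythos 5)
Your proposal is correct and follows essentially the same route as the paper's proof: specialize the momentum expansion to $S_1S_2^\top=\rho I$, use $A^\top r^*=0$ to eliminate the source term so the second moment is controlled by $\rho^2\beta_2\|A^\top(r^k-r^*)\|_2^2$, exploit the nonpositivity of the aggregate coefficient of $\|A^\top(r^k-r^*)\|_2^2$ (guaranteed by the stepsize bound) to pass to $\sigma_{\min}^2(A)\|r^k-r^*\|_2^2$, and close with the two-term recurrence lemma with $\zeta=0$ and $F^1=F^0$. The only difference is cosmetic bookkeeping --- you first write the closed recursion for $\hat{r}^k=r^k-r^*$ and expand it directly, whereas the paper specializes its general inequality from Theorem \ref{THMr-in629} --- and your resulting $\gamma_1,\gamma_2$ coincide with the stated ones.
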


If $T_1,T_2$ are scalar matrices, then we have the following two results for Algorithm \ref{mPDFR} (Algorithm \ref{PFR1}).

\begin{theorem}\label{THMconsclar}
Suppose that the linear system $Ax = b$ is consistent and the sampling matrix  $T_1=T_2=\sqrt{\rho} I$ where $\rho>0$ is a constant. Let $x^1=x^0\in\mathbb{R}^n$ be arbitrary initial vectors and denote $x^0_*=A^{\dagger}b+(I-A^\dagger A)x^0$ and
$$\beta_3=\big\|\mathbb{E}[ S_2S_1^\top AA^\top S_1S_2^\top ]\big\|_2.$$
Assume $0<\alpha<\frac{2}{\rho^2\beta_3\|A\|^2_F}$, $\omega\geq0$ and
that the expressions
$$
\gamma_1=1+3\omega+2\omega^2
-\big(2\alpha-\alpha^2\rho^2\beta_3\|A\|^2_F+\alpha\omega\big)\frac{\sigma_{\min}^2(A)}{\|A\|^2_F}
\
\mbox{and}
\
\gamma_2=2\omega^2+\omega+\omega\alpha\frac{\sigma^2_{\max}(A)}{\|A\|^2_F}
$$
satisfy $\gamma_1+\gamma_2<1$.
Then the iteration sequence of $\{x^k\}_{k=1}^{\infty}$ in Algorithm \ref{mPDFR} satisfies
$$
\mathbb{E}[\|x^{k+1}-x_*^0\|^2_2]\leq q^k(1+\tau)\|x^0-x_*^0\|^2_2, \ k\geq 0,
$$
where $$q=\left\{
           \begin{array}{ll}
             \frac{\gamma_1+\sqrt{\gamma_1^2+4\gamma_2}}{2}, & \hbox{if $\omega>0$;} \\
             \gamma_1, & \hbox{if $\omega=0$,}
           \end{array}
         \right.
\ \mbox{and} \ \tau=q-\gamma_1\geq 0.$$ Moreover,
$\gamma_1+\gamma_2\leq q<1$.
\end{theorem}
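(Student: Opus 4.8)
The plan is to track the error vector $z^k := x^k - x^0_*$ and reduce the theorem to a two-step scalar recursion $\mathbb{E}[\|z^{k+1}\|_2^2] \le \gamma_1\mathbb{E}[\|z^k\|_2^2] + \gamma_2\mathbb{E}[\|z^{k-1}\|_2^2]$, from which the stated bound follows by the same mechanism used for Theorem \ref{THMr-in629}. First I would use consistency: since $b \in \mathrm{Range}(A)$ we have $Ax^0_* = AA^\dagger b = b$, hence $Ax^k - b = A(x^k - x^0_*) = Az^k$. Substituting $T_1 = T_2 = \sqrt{\rho}\,I$ (so $T_1T_2^\top = \rho I$) into step 2 of Algorithm \ref{mPDFR} turns the update into
$$
z^{k+1} = z^k - \alpha\rho\, A^\top S_1S_2^\top A z^k + \omega(z^k - z^{k-1}),
$$
a recursion with \emph{no} constant (residual) term. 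This absence is exactly what makes the final bound homogeneous, in contrast to Theorem \ref{THMr-in629}, where the inconsistent residual $r^*$ persists.

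Before estimating second moments I would record an invariant-subspace fact: since $z^1 = z^0 = x^0 - x^0_* = A^\dagger(Ax^0-b) \in \mathrm{Range}(A^\top)$ and each update adds only a vector of the form $A^\top(\cdot)$ and a multiple of $z^k - z^{k-1}$, all iterates satisfy $z^k \in \mathrm{Range}(A^\top)$. On this subspace $\|Az^k\|_2^2 \ge \sigma_{\min}^2(A)\|z^k\|_2^2$, which is precisely what licenses the appearance of $\sigma_{\min}^2(A)$ in $\gamma_1$. Writing $u^k := (1+\omega)z^k - \omega z^{k-1}$ so that $z^{k+1} = u^k - \alpha\rho A^\top S_1S_2^\top Az^k$, I would expand $\|z^{k+1}\|_2^2$ and take the conditional expectation. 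Assumption \ref{assumption1} with $T_1T_2^\top = \rho I$ gives $\rho\,\mathbb{E}[A^\top S_1S_2^\top] = A^\top/\|A\|_F^2$, which linearizes the inner-product term into $-\tfrac{2\alpha}{\|A\|_F^2}\langle Au^k, Az^k\rangle$; the definition of $\beta_3$ bounds the quadratic noise term by $\alpha^2\rho^2\beta_3\|Az^k\|_2^2$, using $(Az^k)^\top \mathbb{E}[S_2S_1^\top AA^\top S_1S_2^\top](Az^k) \le \beta_3\|Az^k\|_2^2$.

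The remaining work is bookkeeping: expanding $\|u^k\|_2^2$ and $\langle Au^k, Az^k\rangle$ into $\|z^k\|_2^2$, $\|z^{k-1}\|_2^2$, and the cross terms $\langle z^k, z^{k-1}\rangle$, $\langle Az^{k-1}, Az^k\rangle$, which I would split by the weighted Young inequality $2\langle a,b\rangle \le \|a\|_2^2 + \|b\|_2^2$ (together with $\|Az^{k-1}\|_2^2 \le \sigma_{\max}^2(A)\|z^{k-1}\|_2^2$, which produces the $\sigma_{\max}^2(A)$ term in $\gamma_2$). The one genuinely delicate point — and the step I expect to be the main obstacle — is the handling of the $\|Az^k\|_2^2$ terms: they arrive with mixed signs (a $-2\alpha(1+\omega)/\|A\|_F^2$ from the linear term, a $+\alpha\omega/\|A\|_F^2$ from the split cross term, and $+\alpha^2\rho^2\beta_3$ from the noise), and one must first \emph{combine} them into the single coefficient $\alpha^2\rho^2\beta_3 - (2\alpha+\alpha\omega)/\|A\|_F^2$ and only then apply $\|Az^k\|_2^2 \ge \sigma_{\min}^2(A)\|z^k\|_2^2$. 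That lower bound points in the correct direction only because the stepsize restriction $\alpha < 2/(\rho^2\beta_3\|A\|_F^2)$ forces this combined coefficient to be negative; getting this ordering and sign right is exactly what yields the stated $\gamma_1$ (with $\sigma_{\min}^2$, not $\sigma_{\max}^2$, multiplying the $\alpha^2\rho^2\beta_3$ contribution) rather than a weaker expression. Once the recursion $\mathbb{E}[\|z^{k+1}\|_2^2] \le \gamma_1\mathbb{E}[\|z^k\|_2^2] + \gamma_2\mathbb{E}[\|z^{k-1}\|_2^2]$ is established, the hypothesis $x^1 = x^0$ (so $\|z^1\|_2 = \|z^0\|_2$) together with the elementary lemma on nonnegative sequences obeying $F_{k+1} \le \gamma_1 F_k + \gamma_2 F_{k-1}$ gives $\mathbb{E}[\|x^{k+1}-x^0_*\|_2^2] \le q^k(1+\tau)\|x^0-x^0_*\|_2^2$, with $q$ the larger root of $t^2 = \gamma_1 t + \gamma_2$, $\tau = q-\gamma_1$, and $\gamma_1+\gamma_2 \le q < 1$ following from the hypothesis $\gamma_1+\gamma_2<1$ and the quadratic formula.
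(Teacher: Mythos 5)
Your proposal is correct and follows essentially the same route as the paper's proof: reduce to the homogeneous recursion $\mathbb{E}[\|z^{k+1}\|_2^2]\le\gamma_1\mathbb{E}[\|z^k\|_2^2]+\gamma_2\mathbb{E}[\|z^{k-1}\|_2^2]$ by using consistency ($Ax^0_*=b$), the scalar form of $T_1,T_2$ to bound the noise term via $\beta_3$, the invariance $x^k-x^0_*\in\mathrm{Range}(A^\top)$ to justify the $\sigma_{\min}^2(A)$ lower bound, Young's inequality with $\sigma_{\max}^2(A)$ on the momentum cross term, and finally the two-term recursion lemma. The only cosmetic difference is that you organize the expansion around $u^k=(1+\omega)z^k-\omega z^{k-1}$ rather than the paper's three-way split of the squared norm, and your sign/ordering discussion of the combined $\|Az^k\|_2^2$ coefficient matches the paper's use of $2\alpha-\alpha^2\rho^2\beta_3\|A\|_F^2\ge 0$.
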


\begin{remark}
	If we choose the initial vectors  $x^1=x^0\in\mbox{Row}(A)$, then $x_0^*=A^{\dagger}b=x^*_{LS}$. From Theorem \ref{THMconsclar}, we know that the iteration sequence $ \{ x^k \}_{k=0}^\infty $ generated by Algorithm \ref{mPDFR} now converges to  the least-squares or the least-norm least-squares solution $x^*_{LS}=A^{\dagger}b$.
\end{remark}

\begin{remark}
	\label{remark75}
Suppose that $x$ is a random vector in $\mathbb{R}^{n}$ with finite mean $\mathbb{E}[x]$. Then we have
	$$
	\mathbb{E} \left[ \left\|x-x_0^{*}\right\|_2^{2} \right] =\left\|\mathbb{E} \left[ x-x_0^{*} \right] \right\|_2^{2}
	+\mathbb{E} \left[ \|x-\mathbb{E}[x]\|_2^{2}\right].
	$$
This implies that the quantity $\mathbb{E} \left[ \left\|x-x_0^{*}\right\|_2^{2} \right] $  is larger than $\left\|\mathbb{E} \left[ x-x_0^{*} \right] \right\|_2^{2}$, and hence harder to push to zero. As a corollary, the convergence rate of $\mathbb{E} \left[ \left\|x-x_0^{*}\right\|_2^{2} \right] $ to zero established in Theorem \ref{THMconsclar} implies the same rate for that of $\left\|\mathbb{E} \left[ x-x_0^{*} \right] \right\|_2^{2}$. However, note that in Theorem \ref{THMfm2} we have established an accelerated rate for $ \left\|\mathbb{E} \left[ x-x_0^{*} \right] \right\|_2^{2}$ and Theorem \ref{THMfm2} is for different types of linear systems.
\end{remark}

Finally, we present the following result for the case where $T_1T_2^\top A^\top S_1^\top S_2r^*=0$.

\begin{theorem}\label{THMr7251}
Let $x_{LS}^*$ be the least-squares or the least-norm least-squares solution of the linear system $Ax=b$ and the initial vectors $x^1=x^0\in\mathbb{R}^n$.  Denote by $r^*=Ax_{LS}^*-b$ and $r^k=Ax^k-b$. Suppose that for any random parameter matrix pair $(S_1,S_2,T_1,T_2)$ sampled from the distribution $\mathcal{D}$, it satisfies that $T_1T_2^\top A^\top S_1^\top S_2r^*=0$.
Assume $0<\alpha<\frac{2\sigma^2_{\min}(A)}{\beta\|A\|^2_F}$, $\omega\geq0$ and
that the expressions
$$
\gamma_1=1+3\omega+2\omega^2-(2\alpha+\alpha\omega)\frac{\sigma^2_{\min}(A)}{\|A\|^2_F}+\alpha^2\beta
\
\mbox{and}
\
\gamma_2=2\omega^2+\omega+\omega\alpha\frac{\sigma^2_{\max}(A)}{\|A\|^2_F}
$$
satisfy $\gamma_1+\gamma_2<1$, where $\beta$ is defined as \eqref{eta-beta}.
Then the iteration sequence of $\{x^k\}_{k=1}^{\infty}$ in Algorithm \ref{mPDFR} satisfies
$$
\mathbb{E}[\|r^{k+1}-r^*\|^2_2]\leq q^k(1+\tau)\|r^0-r^*\|^2_2,\ k\geq 0,
$$
where $$q=\left\{
           \begin{array}{ll}
             \frac{\gamma_1+\sqrt{\gamma_1^2+4\gamma_2}}{2}, & \hbox{if $\omega>0$;} \\
             \gamma_1, & \hbox{if $\omega=0$,}
           \end{array}
         \right.
\ \mbox{and} \ \tau=q-\gamma_1\geq 0.$$ Moreover,
$\gamma_1+\gamma_2\leq q<1$.
\end{theorem}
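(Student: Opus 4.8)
The plan is to mirror the argument for Theorem \ref{THMr-in629}, using the extra hypothesis to delete both the factor two in front of $\alpha^2\beta$ and the additive residual tail. Throughout write $Z:=A T_1 T_2^\top A^\top S_1 S_2^\top\in\mathbb{R}^{m\times m}$ and $e^k:=r^k-r^*$. Step~2 of Algorithm \ref{mPDFR} gives, after left-multiplying by $A$, the residual recursion $r^{k+1}=r^k-\alpha Z r^k+\omega(r^k-r^{k-1})$. The hypothesis $T_1 T_2^\top A^\top S_1 S_2^\top r^*=0$ holds for every sample, so $Zr^*=A\,(T_1T_2^\top A^\top S_1 S_2^\top r^*)=0$ and hence $Zr^k=Z(e^k+r^*)=Ze^k$. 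Subtracting $r^*$ then yields the clean second-order recursion
$$e^{k+1}=e^k-\alpha Z e^k+\omega(e^k-e^{k-1}).$$
This is the crucial simplification: in the general Theorem \ref{THMr-in629} one must retain $-\alpha Z r^*\neq 0$, which forces the lossy bound $\|Zr^k\|_2^2\le 2\|Ze^k\|_2^2+2\|Zr^*\|_2^2$ and produces both the $2\alpha^2\beta$ coefficient and the $\|r^*\|_2^2$ tail; here every such term is identically zero.

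Next I would record two facts. Since $e^k=A(x^k-x_{LS}^*)\in\mbox{Range}(A)$ for all $k$, the Rayleigh bounds $\sigma_{\min}^2(A)\|e^k\|_2^2\le\langle e^k,AA^\top e^k\rangle\le\sigma_{\max}^2(A)\|e^k\|_2^2$ hold; and under Assumption \ref{assumption1}, $\mathbb{E}[Z]=AA^\top/\|A\|_F^2$ while $\mathbb{E}\|Ze^k\|_2^2=(e^k)^\top\mathbb{E}[Z^\top Z]e^k\le\beta\|e^k\|_2^2$ with $\beta$ as in \eqref{eta-beta}. I would then take the conditional expectation $\mathbb{E}_k[\cdot]$ given $e^k,e^{k-1}$ of $\|e^{k+1}\|_2^2$ and expand. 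Setting $g:=AA^\top/\|A\|_F^2$, the terms organize as $(1+\omega)^2\|e^k\|_2^2+\omega^2\|e^{k-1}\|_2^2$, the quadratic contribution $\alpha^2\mathbb{E}\|Ze^k\|_2^2\le\alpha^2\beta\|e^k\|_2^2$, the descent term $-2\alpha(1+\omega)\langle e^k,ge^k\rangle$, and two cross terms $-2\omega(1+\omega)\langle e^k,e^{k-1}\rangle$ and $2\omega\alpha\langle e^{k-1},ge^k\rangle$.

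The first cross term is handled by AM--GM, $-2\omega(1+\omega)\langle e^k,e^{k-1}\rangle\le\omega(1+\omega)(\|e^k\|_2^2+\|e^{k-1}\|_2^2)$. The delicate step, which I expect to be the main obstacle, is the second cross term: I apply the positive-semidefinite Young inequality $2\langle e^{k-1},ge^k\rangle\le\langle e^{k-1},ge^{k-1}\rangle+\langle e^k,ge^k\rangle$ and then recombine the resulting $\omega\alpha\langle e^k,ge^k\rangle$ with the descent term, which converts the coefficient $-2\alpha(1+\omega)$ into exactly $-(2\alpha+\alpha\omega)$. This precise bookkeeping is what lands the estimate on the stated $\gamma_1$ rather than on an off-by-$\alpha\omega$ constant. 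Bounding the retained $e^k$-piece below by $\sigma_{\min}^2(A)$ and the $e^{k-1}$-piece above by $\sigma_{\max}^2(A)$ (both divided by $\|A\|_F^2$), the coefficients collect precisely into $\gamma_1$ and $\gamma_2$, giving $\mathbb{E}_k\|e^{k+1}\|_2^2\le\gamma_1\|e^k\|_2^2+\gamma_2\|e^{k-1}\|_2^2$.

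Finally, taking total expectations yields $F_{k+1}\le\gamma_1 F_k+\gamma_2 F_{k-1}$ for $F_k:=\mathbb{E}\|r^k-r^*\|_2^2$, with $\gamma_1,\gamma_2\ge0$ and $\gamma_1+\gamma_2<1$ by assumption. I then invoke the same scalar recurrence estimate that produces $q$ and $\tau$ in Theorem \ref{THMr-in629} to conclude $F_{k+1}\le q^k(1+\tau)F_0$, where the initialization $x^1=x^0$ (so $F_1=F_0$) is exactly what permits the clean single-term right-hand side. The stated $\gamma_1+\gamma_2\le q<1$ follows from that recurrence device, and the enlarged stepsize range $0<\alpha<2\sigma_{\min}^2(A)/(\beta\|A\|_F^2)$ is precisely the condition making $q(0)=1-2\alpha\sigma_{\min}^2(A)/\|A\|_F^2+\alpha^2\beta<1$, i.e.\ the $\omega=0$ endpoint; the doubling relative to Theorem \ref{THMr-in629} is the quantitative payoff of the hypothesis $Zr^*=0$.
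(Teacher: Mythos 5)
Your proposal is correct and follows essentially the same route as the paper: the paper's proof likewise observes that the hypothesis (with $S_1^\top S_2$ read as $S_1S_2^\top$) forces $AT_1T_2^\top A^\top S_1S_2^\top r^k = AT_1T_2^\top A^\top S_1S_2^\top(r^k-r^*)$, so that $\mathbb{E}\|\cdot\|_2^2\le\beta\|r^k-r^*\|_2^2$ without the factor $2$ or the $\|r^*\|_2^2$ tail, and then reruns the expansion, Rayleigh/Young bounds, and scalar recurrence lemma of Theorems \ref{THMr-in629} and \ref{THMr}. Your bookkeeping of the cross terms (Young's inequality on $2\langle e^{k-1},ge^k\rangle$ followed by recombination with the descent term) is algebraically identical to the paper's bound $2\langle AA^\top e^k,e^{k-1}-e^k\rangle\le\sigma_{\max}^2(A)\|e^{k-1}\|_2^2-\sigma_{\min}^2(A)\|e^k\|_2^2$ and lands on the same $\gamma_1,\gamma_2$.
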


\section{Special cases and  new efficient methods}
\label{section-5}
Our framework flexibility allows us to adjust the parameter matrices and leads to a number of popular methods.
In this section, we briefly mention how by selecting
the parameters $(S_1,S_2,T_1,T_2)$ of our method, we recover several existing methods and lead to completely new
methods; see Table \ref{table1} for a quick summary.

\begin{table}
\setlength{\tabcolsep}{2pt}
\caption{\label{table1} Special cases of Algorithm \ref{mPDFR}, where $S$ is an $m\times p$ Gaussian matrix, $T$ is an $n\times s$ Gaussian matrix, and $\eta$ is a Gaussian vector. mSGC works for the case where $A$ is a symmetric matrix.  }
\centering
{\small
\begin{tabular}{ c c c c c c c c c c l   }
\hline
  \\
 Algorithms & &$S_1$ & &$S_2$& &$T_1$& &$T_2$& &$x^{k+1}$\\[0.2cm]
\hline

\hline
\\
mRK &  &$\frac{e_j}{\|a_j\|^2_2}$ & &$\frac{e_j}{\|a_j\|^2_2}$& &$I$& &$I$& &$x^k-\alpha\frac{\langle a_j,x^k\rangle-b_j}{\|a_j\|^2_2}a_j+\omega(x^k-x^{k-1})$\\[0.35cm]
mRGS &  &$I$ & &$I$& &$\frac{e_i}{\|A_i\|^2_2}$& &$\frac{e_i}{\|A_i\|^2_2}$& &$x^k-\alpha\frac{A^\top_i(Ax^k-b)}{\|A_i\|^2_2}e_i+\omega(x^k-x^{k-1})$\\[0.35cm]
mDSGS &  &$\frac{e_i}{|a_{i,j}|}$& &$\frac{e_i}{|a_{i,j}|}$& &$e_j$ & &$e_j$& &$x^{k}-\alpha\frac{\langle a_i,x^{k}\rangle-b_i}{a_{i,j}}e_j+\omega(x^{k}-x^{k-1})$\\[0.35cm]
mRBK &  &$\sqrt{\frac{m}{p}}I_{:\mathcal{R}}$& &$\sqrt{\frac{m}{p}}I_{:\mathcal{R}}$& &$\frac{I}{\|A\|_F}$ & &$\frac{I}{\|A\|_F}$& &$x^k-\frac{\alpha m}{p\|A\|^2_F}(A_{\mathcal{R}:})^\top (A_{\mathcal{R}:}x^k-b_{\mathcal{R}})+\omega(x^k-x^{k-1})$\\[0.35cm]
mRBCD &  &$\frac{I}{\|A\|_F}$ & &$\frac{I}{\|A\|_F}$& &$\sqrt{\frac{n}{s}}I_{:\mathcal{L}}$& &$\sqrt{\frac{n}{s}}I_{:\mathcal{L}}$&  &$x^k-\frac{\alpha n}{s\|A\|^2_F}I_{:\mathcal{L}}(A_{:\mathcal{L}})^\top (Ax^k-b)+\omega(x^k-x^{k-1})$\\[0.35cm]
mBGK &  &$S$& &$S$& &$\frac{I}{\sqrt{p}\|A\|_F}$& &$\frac{I}{\sqrt{p}\|A\|_F}$&  &$x^k-\alpha\frac{A^\top S S^\top(Ax^k-b)}{p\|A\|^2_F}+\omega(x^k-x^{k-1})$\\[0.35cm]
mBGLS &  &$\frac{I}{\sqrt{s}\|A\|_F}$& &$\frac{I}{\sqrt{s}\|A\|_F}$& &$T$& &$T$&   &$x^k-\alpha\frac{TT^\top A^\top(Ax^k-b)}{s\|A\|^2_F}+\omega(x^k-x^{k-1})$\\[0.35cm]
mSGC &  &$\eta$& &$\frac{e_j}{\mbox{Tr}(A)}$& &$\frac{e_i}{a_{i,j}}$& &$\eta$&   &$x^k-\frac{\alpha \eta^\top A\eta}{\mbox{Tr}(A)}\cdot \frac{\langle a_j,x^k\rangle-b_j}{a_{i,j}}e_i+\omega(x^k-x^{k-1})$\\[0.2cm]
\hline
\end{tabular}
}
\end{table}

\subsection{Randomized Kaczmarz method with momentum}
The randomized Kaczmarz method is one special case of Algorithm \ref{PFR1}. Choosing $T_1=T_2=I$ and $S_1=S_2=\frac{e_j}{\|a_j\|_2}$ with probability $\frac{\|a_j\|^2_2}{\|A\|^2_F}$, we have
$$
\mathbb{E}[A^\top S_1S_2^\top]=\mathbb{E}\bigg[A^\top\frac{e_je_j^\top}{\|a_j\|^2_2}\bigg]=\frac{A^\top}{\|A\|^2_F},
$$
and Algorithm \ref{PFR1} recovers the RK iteration
$$
x^{k+1}=x^k-\alpha\frac{\langle a_j,x^k\rangle-b_j}{\|a_j\|^2_2}a_j.
$$
For this case, we have $\beta_3=\frac{1}{\|A\|^2_F}$ and Theorem \ref{THMconsclar} yields the convergence
estimate of \cite[Theorem 1]{Cai12}:
$$
\mathbb{E}[\|x^{k}-x^0_*\|^2_2]\leq \bigg(1-(2\alpha-\alpha^2)\frac{\sigma^2_{\min}(A)}{\|A\|^2_F}\bigg)^k\|x^{0}-x^0_*\|^2_2,
$$
where the stepsize parameter $\alpha\in(0,2)$. Furthermore, Algorithm \ref{mPDFR} recovers the RK with momentum (mRK)
$$
x^{k+1}=x^k-\alpha\frac{\langle a_j,x^k\rangle-b_j}{\|a_j\|^2_2}a_j+\omega(x^k-x^{k-1}).
$$
Theorem \ref{THMconsclar} also yields the following convergence result of mRK for solving the consistent linear systems:
$$
\mathbb{E}[\|x^{k+1}-x^0_*\|^2_2]\leq q^k(1+\tau)\|x^0-x^0_*\|^2_2,\ k\geq 0,
$$
where $q=\frac{\gamma_1+\sqrt{\gamma_1^2+4\gamma_2}}{2}$, $\tau=q-\gamma_1\geq 0$ with
$
\gamma_1=1+3\omega+2\omega^2
-\big(2\alpha-\alpha^2+\alpha\omega\big)\frac{\sigma_{\min}^2(A)}{\|A\|^2_F}
$, $
\gamma_2=2\omega^2+\omega+\omega\alpha\frac{\sigma^2_{\max}(A)}{\|A\|^2_F}
$, and the stepsize  parameter $0<\alpha<2$. We note that when $\omega=0$, then $\gamma_1=1-\big(2\alpha-\alpha^2\big)\frac{\sigma_{\min}^2(A)}{\|A\|^2_F}>0$, and now  $q=\frac{\gamma_1+\sqrt{\gamma_1^2+4\gamma_2}}{2}=\gamma_1$.

The above result has first been established in \cite[Theorem $1$]{loizou2020momentum}. However, they only treated the consistent linear systems.
In this paper, our convergence results are more general, see Theorems \ref{THMr-in629} and \ref{THMfm2}, which work for various types of linear systems.
We note that the method studied in \cite{loizou2020momentum} is in the setting of quadratic objectives, where the equivalence between  stochastic gradient descent, stochastic Newton method, and stochastic proximal point method can be established.

\subsection{Randomized Gauss-Seidel method with momentum}
The randomized Gauss-Seidel (RGS) method, also known as the randomized coordinate descent (RCD) method, is one special case of Algorithm \ref{PFR1}. Choosing $S_1=S_2=I$ and $T_1=T_2=\frac{e_i}{\|A_i\|_2}$ with probability $\frac{\|A_i\|^2_2}{\|A\|^2_F}$, we have
$$
\mathbb{E}[T_1T_2^\top A^\top]=\mathbb{E}\bigg[\frac{e_ie_i^\top}{\|A_i\|^2_2}A^\top\bigg]=\frac{A^\top}{\|A\|^2_F},
$$
and Algorithm \ref{PFR1} recovers the RGS/RCD iteration
$$
x^{k+1}=x^k-\alpha\frac{A^\top_i(Ax^k-b)}{\|A_i\|^2_2}e_i.
$$
For this case, we have $\beta_2=\frac{1}{\|A\|^2_F}$ and Theorem \ref{THMr} yields the convergence
estimate of \cite{Gri12}:
$$
\mathbb{E}[\|r^{k}-r^*\|^2_2]\leq \bigg(1-(2\alpha-\alpha^2)\frac{\sigma^2_{\min}(A)}{\|A\|^2_F}\bigg)^k\|r^{0}-r^*\|^2_2,
$$
where the stepsize parameter $\alpha\in(0,2)$. In addition,  Algorithm \ref{mPDFR} derives the following momentum variant of RGS/RCD (mRGS/mRCD):
$$
x^{k+1}=x^k-\alpha\frac{A^\top_i(Ax^k-b)}{\|A_i\|^2_2}e_i+\omega(x^{k}-x^{k-1}).
$$
Theorem \ref{THMr} yields the following convergence result for mRGS/mRCD:
$$
\mathbb{E}[\|r^{k+1}-r^*\|^2_2]\leq q^k(1+\tau)\|r^0-r^*\|^2_2, \ k\geq 0,
$$
where $q=\frac{\gamma_1+\sqrt{\gamma_1^2+4\gamma_2}}{2}$ and $\tau=q-\gamma_1\geq 0$
with
$
\gamma_1=1+3\omega+2\omega^2
-\left(2\alpha-\alpha^2+\alpha\omega\right)\frac{\sigma_{\min}^2(A)}{\|A\|^2_F}
$, $
\gamma_2=2\omega^2+\omega+\omega\alpha\frac{\sigma^2_{\max}(A)}{\|A\|^2_F}
$, and $0<\alpha<2$.
We note that for solving the consistent linear system, the mRCD has also been studied in  \cite{loizou2020momentum}.

\subsection{Doubly stochastic Gauss-Seidel with momentum}
The doubly stochastic Gauss-Seidel (DSGS) method  \cite{RHRL2019} is recovered if one takes $S_1=S_2=\frac{e_i}{|a_{i,j}|}\in\mathbb{R}^m$ and $T_1=T_2=e_j\in\mathbb{R}^n$, and the index pair $(i,j)$ is randomly selected with probability $\frac{a_{i,j}^2}{\|A\|^2_F}$. Indeed, now we have
$$
\mathbb{E}[T_1T_2^\top A^\top S_1S_2^\top]=\mathbb{E}\bigg[\frac{e_je_j^\top A^\top e_ie_i^\top}{a_{i,j}^2}\bigg]
=\sum\limits_{i=1}^m\sum\limits_{j=1}^n\frac{e_je_j^\top A^\top e_ie_i^\top}{a_{i,j}^2} \frac{a_{i,j}^2}{\|A\|_F^2}=\frac{A^\top}{\|A\|^2_F},
$$
and Algorithm \ref{PFR1} recovers the DSGS iteration:
$$
x^{k+1}=x^{k}-\alpha\frac{\langle a_i,x^{k}\rangle-b_i}{a_{i,j}}e_j.
$$
Now we have
$$
\begin{array}{ll}
\beta&=\|\mathbb{E}[S_2S_1^\top AT_2 T_1^\top A^\top AT_1T_2^\top A^\top S_1S_2^\top]\|_2=\bigg\|\mathbb{E}\bigg[\frac{e_ie_i^\top A e_j e_j^\top A^\top A e_je_j^\top A^\top e_ie_i^\top}{a_{i,j}^4}\bigg]\bigg\|_2\\
&=\bigg\|\sum\limits_{i=1}^m\sum\limits_{j=1}^n\frac{a_{i,j}^2}{\|A\|^2_F}\frac{e_ia_{i,j} e_j^\top A^\top A e_j a_{i,j}e_i^\top}{a_{i,j}^4}\bigg\|_2
=\bigg\|\sum\limits_{i=1}^m\sum\limits_{j=1}^n\frac{e_iA_j^\top A_je_i^\top}{\|A\|^2_F}\bigg\|_2=\|\sum\limits_{i=1}^m e_ie_i^\top\|_2
\\&=1.
\end{array}
$$
and
$$
\begin{array}{ll}
\beta_1&=\|\mathbb{E}[S_2S_1^\top AT_2 T_1^\top T_1T_2^\top A^\top S_1S_2^\top]\|_2=\bigg\|\mathbb{E}\bigg[\frac{e_ie_i^\top A e_j e_j^\top  e_je_j^\top A^\top e_ie_i^\top}{a_{i,j}^4}\bigg]\bigg\|_2\\
&=\bigg\|\sum\limits_{i=1}^m\sum\limits_{j=1}^n\frac{a_{i,j}^2}{\|A\|^2_F}\frac{e_ia_{i,j} e_j^\top  e_j a_{i,j}e_i^\top}{a_{i,j}^4}\bigg\|_2
=\bigg\|\sum\limits_{i=1}^m\sum\limits_{j=1}^n\frac{e_ie_i^\top}{\|A\|^2_F}\bigg\|_2=\frac{n}{\|A\|^2_F}\big\|\sum\limits_{i=1}^m e_ie_i^\top\big\|_2
\\
&=\frac{n}{\|A\|^2_F}.
\end{array}
$$
By Algorithm \ref{mPDFR}, we can obtain the DSGS with momentum (mDSGS)
$$
x^{k+1}=x^{k}-\alpha\frac{\langle a_i,x^{k}\rangle-b_i}{a_{i,j}}e_j+\omega(x^{k}-x^{k-1}).
$$

Let us consider the convergence properties of mDSGS.
If $A$ is full column rank and the linear system is consistent, then Theorem \ref{THMcfull} yields the following global convergence rate for mDSGS:
$$
\mathbb{E}[\|x^{k+1}-A^{\dagger}b\|^2_2]\leq q^k(1+\tau)\|x^0-A^{\dagger}b\|^2_2,\ k\geq 0,
$$
where $q=\frac{\gamma_1+\sqrt{\gamma_1^2+4\gamma_2}}{2}$ and $\tau=q-\gamma_1\geq 0$ with
$
\gamma_1=1+3\omega+2\omega^2
-\left(2\alpha+\alpha\omega-n\alpha^2\right)\frac{\sigma_{\min}^2(A)}{\|A\|^2_F}
$,
$
\gamma_2=2\omega^2+\omega+\omega\alpha\frac{\sigma^2_{\max}(A)}{\|A\|^2_F}
$, and the stepsize parameter $0<\alpha<\frac{2}{n}$.
When $\omega=0$, the above convergence result recovers the result obtained in \cite[Theorem 1]{RHRL2019} for DSGS.

If $A$ is not full column rank and the linear system is consistent, then Theorem \ref{THMr7251} yields the  following global convergence rate for mDSGS:
$$
\mathbb{E}[\|r^{k+1}-r^*\|^2_2]\leq q^k(1+\tau)\|r^0-r^*\|^2_2,\ k\geq 0,
$$
where $q=\frac{\gamma_1+\sqrt{\gamma_1^2+4\gamma_2}}{2}$ and $\tau=q-\gamma_1\geq 0$ with $
\gamma_1=1+3\omega+2\omega^2-(2\alpha+\alpha\omega)\frac{\sigma^2_{\min}(A)}{\|A\|^2_F}+\alpha^2
$,
$
\gamma_2=2\omega^2+\omega+\omega\alpha\frac{\sigma^2_{\max}(A)}{\|A\|^2_F}
$, and $0<\alpha<\frac{2\sigma^2_{\min}(A)}{\|A\|^2_F}$.  When $\omega=0$, the above convergence result recovers the result obtained in \cite[Theorem 2]{RHRL2019} for DSGS.

Theorem \ref{THMr-in629} yields the  following convergence result of mDSGS for solving different types of linear systems:
 $$
\mathbb{E}[\|r^{k+1}-r^*\|^2_2]\leq q^k(1+\tau)\|r^0-r^*\|^2_2+\frac{2\alpha^2(1-q^k)\|r^*\|^2_2}{1-q}, \ k\geq 0,
$$
 where $q=\frac{\gamma_1+\sqrt{\gamma_1^2+4\gamma_2}}{2}$ and $\tau=q-\gamma_1\geq 0$ with
 $
\gamma_1=1+3\omega+2\omega^2-(2\alpha+\alpha\omega)\frac{\sigma^2_{\min}(A)}{\|A\|^2_F}+2\alpha^2
$,
$
\gamma_2=2\omega^2+\omega+\omega\alpha\frac{\sigma^2_{\max}(A)}{\|A\|^2_F}
$
, and $0<\alpha<\frac{\sigma^2_{\min}(A)}{\|A\|^2_F}$.
To the best of our knowledge, mDSGS has never been analyzed before in any setting.

\subsection{Randomized block Kaczmarz with momentum}
\label{sbst:mRBK}
Our framework also extends to block formulations of the RK method studied in \cite{DS2021}. However, to the best of our knowledge, no momentum variants of this method were analyzed before.

Assume $1\leq p\leq m$. Let $\mathcal{R}$ denote the
set consisting of the uniform sampling of $p$ different numbers of $[m]$. Choose $S = \sqrt{m/p}I_{:\mathcal{R}}$ to be a column concatenation of the columns of the $m\times m$ identity matrix $I$ indexed by $\mathcal{R}$. Let $S_1=S_2=S$ and $T_1=T_2=\frac{I}{\|A\|_F}$.
Then we know that
$$
\mathbb{E}\bigg[\frac{1}{\|A\|^2_F}A^\top S S^\top\bigg]=\frac{A^\top}{\|A\|^2_F} \cdot\frac{m/p}{{m\choose p}}\sum\limits_{\mathcal{R}\subset[m],|\mathcal{R}|=p} I_{:\mathcal{R}}I_{:\mathcal{R}}^\top=\frac{A^\top}{\|A\|^2_F}\cdot \frac{m/p}{{m\choose p}}{m-1\choose p-1}I=\frac{A^\top}{\|A\|^2_F}
$$
and if $p\geq2$, then
\begin{equation}\label{rbkbeta31}
\begin{array}{ll}
\beta_3&=\frac{m^2}{p^2}\left\|\mathbb{E}\left[I_{:\mathcal{R}}I_{:\mathcal{R}}^\top A A^\top I_{:\mathcal{R}}I_{:\mathcal{R}}^\top \right]\right\|_2
\\
&=\frac{m^2}{p^2}\frac{{{m-2\choose p-2}}}{{{m\choose p}}}\left\|AA^\top+\frac{m-p}{p-1}\mbox{diag}(AA^\top)\right\|_2
\\
&=\frac{m(p-1)}{(m-1)p}\left\|AA^\top+\frac{m-p}{p-1}\mbox{diag}(AA^\top)\right\|_2,
\end{array}
\end{equation}
and if $p=1$, then
\begin{equation}\label{rbkbeta32}
\beta_3=m\left\|\mbox{diag}(AA^\top)\right\|_2.
\end{equation}
By Algorithm \ref{PFR1}, the randomized block Kaczmarz (RBK) method updates with the following iterative strategy:
\begin{equation}
\label{rbk77}
x^{k+1}=x^k-\frac{\alpha m}{p\|A\|^2_F}(A_{\mathcal{R}:})^\top (A_{\mathcal{R}:}x^k-b_{\mathcal{R}}).
\end{equation}
Assume that $\|a_i\|_2=1,i=1,\ldots,m$,  the RBK algorithm \eqref{rbk77} can be viewed as a special case of the randomized algorithm \eqref{BKM} proposed in \cite{Nec19}. Indeed, let $J$ be the
set consisting of the uniform sampling of $p$ different numbers of $[m]$, $\omega_i=\frac{1}{p}$ and choose the parameter $\alpha$ appropriately, \eqref{BKM} recovers \eqref{rbk77}.
By Algorithm \ref{mPDFR}, the momentum variant of RBK (mRBK) updates with the following iterative strategy:
$$
x^{k+1}=x^k-\frac{\alpha m}{p\|A\|^2_F}(A_{\mathcal{R}:})^\top (A_{\mathcal{R}:}x^k-b_{\mathcal{R}})+\omega(x^k-x^{k-1}).
$$
Theorem \ref{THMconsclar} yields the following global convergence rate of mRBK for solving consistent linear systems:
$$
\mathbb{E}[\|x^{k+1}-x_*^0\|^2_2]\leq q^k(1+\tau)\|x^0-x_*^0\|^2_2, \ k\geq 0,
$$
where $$q=\left\{
           \begin{array}{ll}
             \frac{\gamma_1+\sqrt{\gamma_1^2+4\gamma_2}}{2}, & \hbox{if $\omega>0$;} \\
             \gamma_1, & \hbox{if $\omega=0$,}
           \end{array}
         \right.
\ \mbox{and} \ \tau=q-\gamma_1\geq 0$$ with
$
\gamma_1=1+3\omega+2\omega^2
-\big(2\alpha-\alpha^2\beta_3/\|A\|^2_F+\alpha\omega\big)\frac{\sigma_{\min}^2(A)}{\|A\|^2_F}
$,
$
\gamma_2=2\omega^2+\omega+\omega\alpha\frac{\sigma^2_{\max}(A)}{\|A\|^2_F}
$, $0<\alpha<\frac{2\|A\|^2_F}{\beta_3}$, and $\beta_3$ is given by \eqref{rbkbeta31} and \eqref{rbkbeta32}.

\subsection{Randomized block coordinate descent with momentum}
\label{sbst:mRBCD}
Our framework extends to block formulations of the randomized coordinate descent method studied in \cite{DS2021}.
To the best of our knowledge, this is the first time that the momentum variant of this method is considered.

Assume $1\leq s\leq n$. Let $\mathcal{L}$ denote the
set consisting of the uniform sampling of $r$ different numbers of $[n]$. Choose $T = \sqrt{n/s}I_{:\mathcal{L}}$ to be a column concatenation of the columns of the $n\times n$ identity matrix $I$ indexed by $\mathcal{L}$. Let $T_1=T_2=T$ and $S_1=S_2=\frac{I}{\|A\|_F}$.
Then we know that
$$
\mathbb{E}\bigg[\frac{1}{\|A\|^2_F}TT^\top A^\top \bigg]=\frac{n/s}{{n\choose s}}\sum\limits_{\mathcal{L}\subset[n],|\mathcal{L}|=s} I_{:\mathcal{L}}I_{:\mathcal{L}}^\top \cdot \frac{A^\top}{\|A\|^2_F} = \frac{n/s}{{n\choose s}}{n-1\choose s-1}I \cdot \frac{A^\top}{\|A\|^2_F}=\frac{A^\top}{\|A\|^2_F}
$$
and if $s\geq2$, then
\begin{equation}\label{rbcdbeta21}
\begin{array}{ll}
\beta_2&=\frac{n^2}{s^2}\left\|\mathbb{E}\left[I_{:\mathcal{L}}I_{:\mathcal{L}}^\top A^\top A I_{:\mathcal{L}}I_{:\mathcal{L}}^\top \right]\right\|_2
\\
&=\frac{n^2}{s^2}\frac{{{n-2\choose s-2}}}{{{n\choose s}}}\left\|A^\top A+\frac{n-s}{s-1}\mbox{diag}(A^\top A)\right\|_2
\\
&=\frac{n(s-1)}{(n-1)s}\left\|A^\top A+\frac{n-s}{s-1}\mbox{diag}(A^\top A)\right\|_2,
\end{array}
\end{equation}
and if $s=1$, then
\begin{equation}\label{rbcdbeta22}
\beta_2=n\left\|\mbox{diag}(A^\top A )\right\|_2.
\end{equation}
By Algorithm \ref{PFR1}, the randomized block coordinate descent (RBCD) method updates with the following iterative strategy:
$$x^{k+1}=x^k-\frac{\alpha n}{s\|A\|^2_F}I_{:\mathcal{L}}(A_{:\mathcal{L}})^\top (Ax^k-b).$$
By Algorithm \ref{mPDFR}, the momentum variant of RBCD (mRBCD) updates with the following iterative strategy:
$$
x^{k+1}=x^k-\frac{\alpha n}{s\|A\|^2_F}I_{:\mathcal{L}}(A_{:\mathcal{L}})^\top (Ax^k-b)+\omega(x^k-x^{k-1}).
$$
Theorem \ref{THMr} shows that mRBCD  converges linearly in expectation for solving different types of linear systems
$$
\mathbb{E}[\|r^{k+1}-r^*\|^2_2]\leq q^k(1+\tau)\|r^0-r^*\|^2_2, \ k\geq 0,
$$
where $$q=\left\{
           \begin{array}{ll}
             \frac{\gamma_1+\sqrt{\gamma_1^2+4\gamma_2}}{2}, & \hbox{if $\omega>0$;} \\
             \gamma_1, & \hbox{if $\omega=0$,}
           \end{array}
         \right.
\ \mbox{and} \ \tau=q-\gamma_1\geq 0$$ with
$
\gamma_1=1+3\omega+2\omega^2
-\left(2\alpha-\alpha^2\beta_2/\|A\|^2_F+\alpha\omega\right)\frac{\sigma_{\min}^2(A)}{\|A\|^2_F}
$,
$
\gamma_2=2\omega^2+\omega+\omega\alpha\frac{\sigma^2_{\max}(A)}{\|A\|^2_F}
$, $0<\alpha<\frac{2\|A\|^2_F}{\beta_2}$, and $\beta_2$ is given by \eqref{rbcdbeta21} and \eqref{rbcdbeta22}.

\subsection{Block Gaussian Kaczmarz with momentum}
In this subsection, we study the  method based on block Gaussian row sampling and derive a new randomized block method.

Let us first propose one special case of Algorithm \ref{PFR1} by using block Gaussian row sampling and refer to it as the \emph{block Gaussian Kaczmarz} (BGK) method.
Choose $S$ to be an $m\times p$ random matrix whose entries are independent, mean zero, Gaussian random variables, and set $S_1=S_2=S$.
Let $T_1=T_2=\frac{I}{\sqrt{p}\|A\|_F}$.  Then we know that
$$
\mathbb{E}\bigg[\frac{1}{p\|A\|^2_F}A^\top S S^\top\bigg]=\frac{A^\top}{\|A\|^2_F}
$$
and
$$
\beta_3=\big\|\mathbb{E}[S_2S_1^\top A A^\top S_1S_2^\top ]\big\|_2=\big\|\mathbb{E}[ S S^\top AA^\top S S^\top ]\big\|_2=(p^2+p)\|A\|_2^2+p\|A\|^2_F,
$$
where the second equality follows from Lemma \ref{lemma-gaussmatrix}.
It follows from Algorithm \ref{PFR1} that the BGK method constructs $x^{k+1}$ by
$$
x^{k+1}=x^k-\alpha\frac{A^\top S S^\top(Ax^k-b)}{p\|A\|^2_F}.
$$
If we take $p=1$, then we can get the Gaussian Kaczmarz proposed in \cite{Gow15}.
By Algorithm \ref{mPDFR}, we can also obtain the accelerated BGK with momentum (mBGK)
$$
x^{k+1}=x^k-\alpha\frac{A^\top S S^\top(Ax^k-b)}{p\|A\|^2_F}+\omega(x^k-x^{k-1}).
$$
Theorem \ref{THMconsclar} yields the following global convergence rate of mBGK for solving consistent linear systems:
$$
\mathbb{E}[\|x^{k+1}-x_*^0\|^2_2]\leq q^k(1+\tau)\|x^0-x_*^0\|^2_2, \ k\geq 0,
$$
where $$q=\left\{
           \begin{array}{ll}
             \frac{\gamma_1+\sqrt{\gamma_1^2+4\gamma_2}}{2}, & \hbox{if $\omega>0$;} \\
             \gamma_1, & \hbox{if $\omega=0$,}
           \end{array}
         \right.
\ \mbox{and} \ \tau=q-\gamma_1\geq 0$$ with
$$
\gamma_1=1+3\omega+2\omega^2
-\left(2\alpha-\alpha^2\frac{(p+1)\|A\|^2_2+\|A\|^2_F}{p\|A\|^2_F}+\alpha\omega\right)\frac{\sigma_{\min}^2(A)}{\|A\|^2_F}
,$$
$
\gamma_2=2\omega^2+\omega+\omega\alpha\frac{\sigma^2_{\max}(A)}{\|A\|^2_F}
$, and $0<\alpha<\frac{2p\|A\|^2_F}{(p+1)\|A\|^2_2+\|A\|^2_F}$.

\subsection{Block Gaussian least-squares with momentum}
In this subsection, we study the  method based on block Gaussian column sampling.
Choose $T$ to be an $n\times s$ random matrix whose entries are independent, mean zero, Gaussian random variables, and set $T_1=T_2=T$.
Let $S_1=S_2=\frac{I}{\sqrt{s}\|A\|_F}$.
Then we know that
$$
\mathbb{E}\bigg[ \frac{1}{s\|A\|^2_F}T T^\top A^\top\bigg]=\frac{A^\top}{\|A\|_F^2}
$$
and
$$
\beta_2=\|\mathbb{E}[ T_2T_1^\top A^\top A T_2T_1^\top   ]\|_2=\|\mathbb{E}[ T T^\top A^\top A T T^\top ]\|_2=(s^2+s)\|A\|_2^2+s\|A\|^2_F,
$$
where the second equality follows from Lemma \ref{lemma-gaussmatrix}.
Algorithm \ref{PFR1} then has the following form
$$
x^{k+1}=x^k-\alpha\frac{TT^\top A^\top(Ax^k-b)}{s\|A\|^2_F},
$$
which we call the \emph{block Gaussian least-squares} (BGLS) method.
By Algorithm \ref{mPDFR}, we can also obtain the accelerated BGLS with momentum (mBGLS)
$$
x^{k+1}=x^k-\alpha\frac{TT^\top A^\top(Ax^k-b)}{s\|A\|^2_F}+\omega(x^k-x^{k-1}).
$$
By Theorem \ref{THMr}, we know that  mBGLS yields a global convergence rate for solving various types of linear systems
$$
\mathbb{E}[\|r^{k+1}-r^*\|^2_2]\leq q^k(1+\tau)\|r^0-r^*\|^2_2, \ k\geq 0,
$$
where $$q=\left\{
           \begin{array}{ll}
             \frac{\gamma_1+\sqrt{\gamma_1^2+4\gamma_2}}{2}, & \hbox{if $\omega>0$;} \\
             \gamma_1, & \hbox{if $\omega=0$,}
           \end{array}
         \right.
\ \mbox{and} \ \tau=q-\gamma_1\geq 0$$ with
$$
\gamma_1=1+3\omega+2\omega^2
-\left(2\alpha-\alpha^2\frac{(s+1)\|A\|^2_2+\|A\|^2_F}{s\|A\|^2_F}+\alpha\omega\right)\frac{\sigma_{\min}^2(A)}{\|A\|^2_F}
,$$
$
\gamma_2=2\omega^2+\omega+\omega\alpha\frac{\sigma^2_{\max}(A)}{\|A\|^2_F}
$, and $0<\alpha<\frac{2s\|A\|^2_F}{(s+1)\|A\|^2_2+\|A\|^2_F}$.

\subsection{The symmetric matrix cases}
Recall that all of the special cases discussed above focus exclusively on the cases $T_1=T_2$ and $S_1=S_2$. In this subsection, we show that one can choose $T_1\neq T_2$ and $S_1\neq S_2$.

We consider the case where $A\in\mathbb{R}^{n\times n}$ is a symmetric matrix with $\mbox{Tr}(A)\neq 0$. Choosing $S_1=T_2=\eta$ with $\eta$ being a random vector $\eta\backsim\mathcal{N}(0,I_n)$, and $(T_1,S_2)=\left(\frac{e_i}{a_{i,j}},\frac{e_j}{\mbox{Tr}(A)}\right)$, and the index pair $(i,j)$ is randomly selected with probability $\frac{a_{i,j}^2}{\|A\|^2_F}$. Now we have
$$
\mathbb{E}[T_1T_2^\top A^\top S_1S_2^\top]=\frac{1}{\mbox{Tr}(A)}\mathbb{E}\left[\frac{e_i}{a_{i,j}}\eta^\top A \eta e_j^\top\right]
=\frac{1}{\mbox{Tr}(A)}\mathbb{E}[\eta^\top A \eta ]\mathbb{E}\left[\frac{e_i e_j^\top}{a_{i,j}}\right]=\frac{A}{\|A\|^2_F}
$$
and
$$
\begin{array}{ll}
\beta&=\left\|\mathbb{E}\bigg[\frac{e_j\eta^\top A \eta e_i^\top A A e_i \eta^\top A \eta e_j^\top}{a_{i,j}^2(\mbox{Tr}(A))^2}\bigg]\right\|_2\\
&=\frac{1}{(\mbox{Tr}(A))^2}\left\|\sum\limits_{i=1}^n\sum\limits_{j=1}^n\frac{a_{i,j}^2}{\|A\|^2_F}\frac{ e_i^\top A A e_i e_je_j^\top}{a_{i,j}^2}\cdot\mathbb{E}\left[(\eta^\top A \eta)^2\right]\right\|_2
\\
&=\frac{1}{(\mbox{Tr}(A))^2}\left\|I\cdot\mathbb{E}\left[(\eta^\top A \eta)^2\right]\right\|_2
\\
&=\frac{3\|A\|^4_2}{(\mbox{Tr}(A))^2}.
\end{array}
$$
By Algorithm \ref{mPDFR}, we get the following algorithm:
$$
x^{k+1}=x^k-\frac{\alpha \eta^\top A\eta}{\mbox{Tr}(A)}\cdot \frac{\langle a_j,x^k\rangle-b_j}{a_{i,j}}e_i+\omega(x^k-x^{k-1}),
$$
which we call the \emph{symmetric Gaussian  coordinate} with momentum (mSGC).
Theorem \ref{THMr-in629} yields  the following global convergence rate for mSGC:
$$
\mathbb{E}[\|r^{k+1}-r^*\|^2_2]\leq q^k(1+\tau)\|r^0-r^*\|^2_2+\frac{6\alpha^2\|A\|^4_2(1-q^k)\|r^*\|^2_2}{(1-q)(\mbox{Tr}(A))^2}, \ k\geq 0,
$$
where $$q=\left\{
           \begin{array}{ll}
             \frac{\gamma_1+\sqrt{\gamma_1^2+4\gamma_2}}{2}, & \hbox{if $\omega>0$;} \\
             \gamma_1, & \hbox{if $\omega=0$,}
           \end{array}
         \right.
\ \mbox{and} \ \tau=q-\gamma_1\geq 0$$ with $
\gamma_1=1+3\omega+2\omega^2-(2\alpha+\alpha\omega)\frac{\sigma^2_{\min}(A)}{\|A\|^2_F}+\frac{6\alpha^2\|A\|^4_2}{(\mbox{Tr}(A))^2}
$
,
$
\gamma_2=2\omega^2+\omega+\omega\alpha\frac{\sigma^2_{\max}(A)}{\|A\|^2_F}
$, and  $0<\alpha<\frac{\sigma^2_{\min}(A)(\mbox{Tr}(A))^2}{3\|A\|_2^4\|A\|^2_F}$.

\section{Numerical experiments}

In this section, we study the computational behavior of the proposed algorithms. In particular, we focus mostly on the evaluation of the performance of  mRBK, mRBCD, mBGK, and mBGLS.
We divide the algorithms discussed in Section \ref{section-5} (see Table \ref{table1}) into two groups
for comparison: (1) mRK, mDGSG, mRBK, and mBGK for solving consistent linear systems; (2) mRGS, mRBCD, and mBGLS for solving inconsistent linear systems.

All the methods are implemented in  {\sc Matlab} R2022a for Windows $10$ on a desktop PC with Intel(R) Core(TM) i7-10710U CPU @ 1.10GHz  and 16 GB memory.

\subsection{Numerical setup}
In our test, we use the  relative solution error (RSE)
$$\mbox{RSE}=\frac{\|x^k-x^0_*\|^2_2}{\|x^0-x^0_*\|^2_2},$$
or the relative residual error (RRE)
 $$\mbox{RRE}=\frac{\|r^k-r^*\|^2_2}{\|r^0-r^*\|^2_2},$$
 as the stopping criterion.
 We use $\kappa_2(A) = \|A\|_2\|A^\dagger\|_2$ to denote the $2$-norm condition number.

During our experiment, we set the stepsize parameter $\alpha=1$ for both mRK and mRGS. For mDSGS, we set $\alpha=\frac{1}{n}$ if the matrix $A$ is full-rank, and set $\alpha=\frac{\sigma_{\min}^2(A)}{\|A\|^2_2}$ otherwise. We set $\alpha=\frac{\|A\|_F^2}{\beta_3}$ for mRBK, where $\beta_3$ is given by \eqref{rbkbeta31} and \eqref{rbkbeta32}. For mBGK, we set $\alpha=\frac{p\|A\|^2_F}{(p+1)\|A\|^2_2+\|A\|^2_F}$. We set $\alpha=\frac{\|A\|_F^2}{\beta_2}$ for mRBCD, where $\beta_2$ is given by \eqref{rbcdbeta21} and \eqref{rbcdbeta22}. For mBGLS, we set $\alpha=\frac{s\|A\|^2_F}{(s+1)\|A\|^2_2+\|A\|^2_F}$. We note that the choices of those stepsize parameters $\alpha$ are not arbitrary. From the discussions in Section  \ref{section-5}, we know that the best convergence rates of those algorithms with no-momentum are obtained precisely for those choices of $\alpha$. Therefore, the comparisons in our experiment will be with the best-in-theory no-momentum variants.

\subsection{The effect of  heavy ball momentum}
In this subsection, we study the computational behavior of momentum variants of the proposed methods and compare them with their no momentum variants for both synthetic and real data.

{\bf Synthetic data.}  The synthetic data is generated based on the {\sc Matlab} function {\tt randn}. Specifically, we generate the matrix $A$ by using the {\sc Matlab} function {\tt randn(m,n)} and generate the exact solution $x^*$
by {\tt randn(n,1)}. The consistent system is constructed by setting $b=Ax^*$. For the inconsistent system, we first generate a $\triangle b={\tt null(A')*randn(m-r,1)}$, then set $b=Ax^*+\triangle b$, where {\tt r} is the rank of $A$.

We also test our algorithms on problem instances generated with different given values of condition number  $\kappa_2(A)$. To do so, we first use the {\sc Matlab} function {\tt randn(m,n)} to generate a Gaussian matrix. Then, we use SVD to modify the singular values to obtain a matrix with the desired values of $\kappa_2(A)$. This is done by linearly scaling the differences of all singular values with $\sigma^2_{\min}(A)$.

{\bf Real data.} The real-world data are available via the SuiteSparse Matrix Collection \cite{Kol19}. Each
dataset consists of a matrix $A\in\mathbb{R}^{m\times n}$ and a vector $b\in\mathbb{R}^m$. In our experiments, we only use the matrices $A$ of the datasets and ignore the vector $b$.
As before, to ensure the consistency of the linear system, we first generate the solution by $x^*={\tt randn(n,1)}$ and then set $b=Ax^*$. For the inconsistent system, we set $b=Ax^*+{\tt null(A')*randn(m-r,1)}$.

In our test, we use $x^0=0\in\mathbb{R}^n$ or $x^1=x^0=0\in\mathbb{R}^n$ as an initial point.
For each consistent linear system, we run mRK, mDGSG, mRBK, and mBGK (Figures \ref{figue722-1} and \ref{figue723-2}) for several values of momentum parameters $\omega$ and plot the performance of the methods (average after 10 trials) for RSE.
For each inconsistent linear system, similarly, we run mRGS, mRBCD, and mBGLS (Figures \ref{figue722-2} and \ref{figue723-1}) for several values of momentum parameters $\omega$ and plot the performance of the methods (average after 10 trials) for RRE. We set $p=20$ and $s=20$ for mRBK, mBGK, mRBCD, and mBGLS, as they are always good options for sufficient fast convergence of those methods.

For $\omega=0$, the methods now are equivalent with their no-momentum variants. We note that in all of the presented tests the momentum parameters $\omega$ of the methods are chosen to be nonnegative constants that
do not depend on parameters that are not known to the users such as $\sigma^2_{\min}(A)$ and $\sigma^2_{\max}(A)$.
From Figures \ref{figue722-1}, \ref{figue723-2}, \ref{figue722-2}, and \ref{figue723-1}, it is clear that the introducing of momentum term leads
to an improvement in the performance of the methods. More specifically, from the figures we observe the
following:

\begin{itemize}
	\item[(1)] The momentum technique can improve the convergence behavior of the methods. It can be observed that  mRK, mDGSG, mRBK, mBGK, mRGS, mRBCD, and mBGLS,  with appropriately chosen momentum parameters $0 <\omega \leq 0.6$, always converge faster than their no-momentum variants.
	\item[(2)] For consistent linear systems with different values of $\kappa_2(A)$ (well- or ill-conditioned),  and for different types of data (synthetic or real-world), $\omega=0.4$ is always a good option for a sufficient fast  convergence of both mRK and mDSGS; $\omega=0.4$ or $0.5$ is always a good option for both mRBK and mBGK (Figures \ref{figue722-1} and \ref{figue723-2}).
\item[(3)] For inconsistent linear systems with different values of $\kappa_2(A)$ (well- or ill-conditioned),  and for different types of data (synthetic or real-world), $\omega=0.4$ is always a good option for a sufficient fast  convergence of mRGS; $\omega=0.4$ or $0.5$ is always a good option for both mRBCD and mBGLS (Figures \ref{figue722-2}, and \ref{figue723-1}).
\end{itemize}

\begin{figure}[hptb]
	\centering
	\begin{tabular}{cc}
        \includegraphics[width=0.24\linewidth]{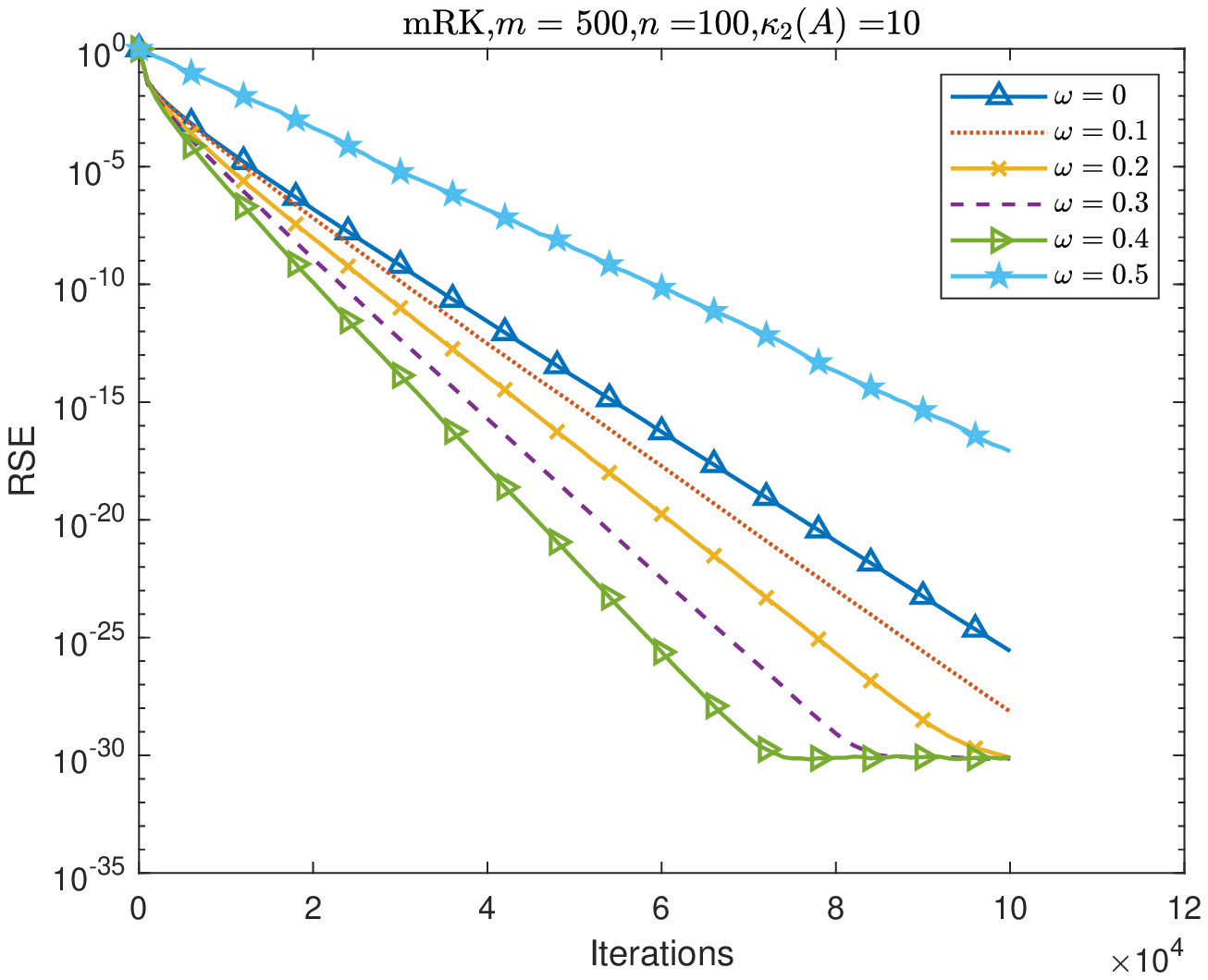}
        \includegraphics[width=0.24\linewidth]{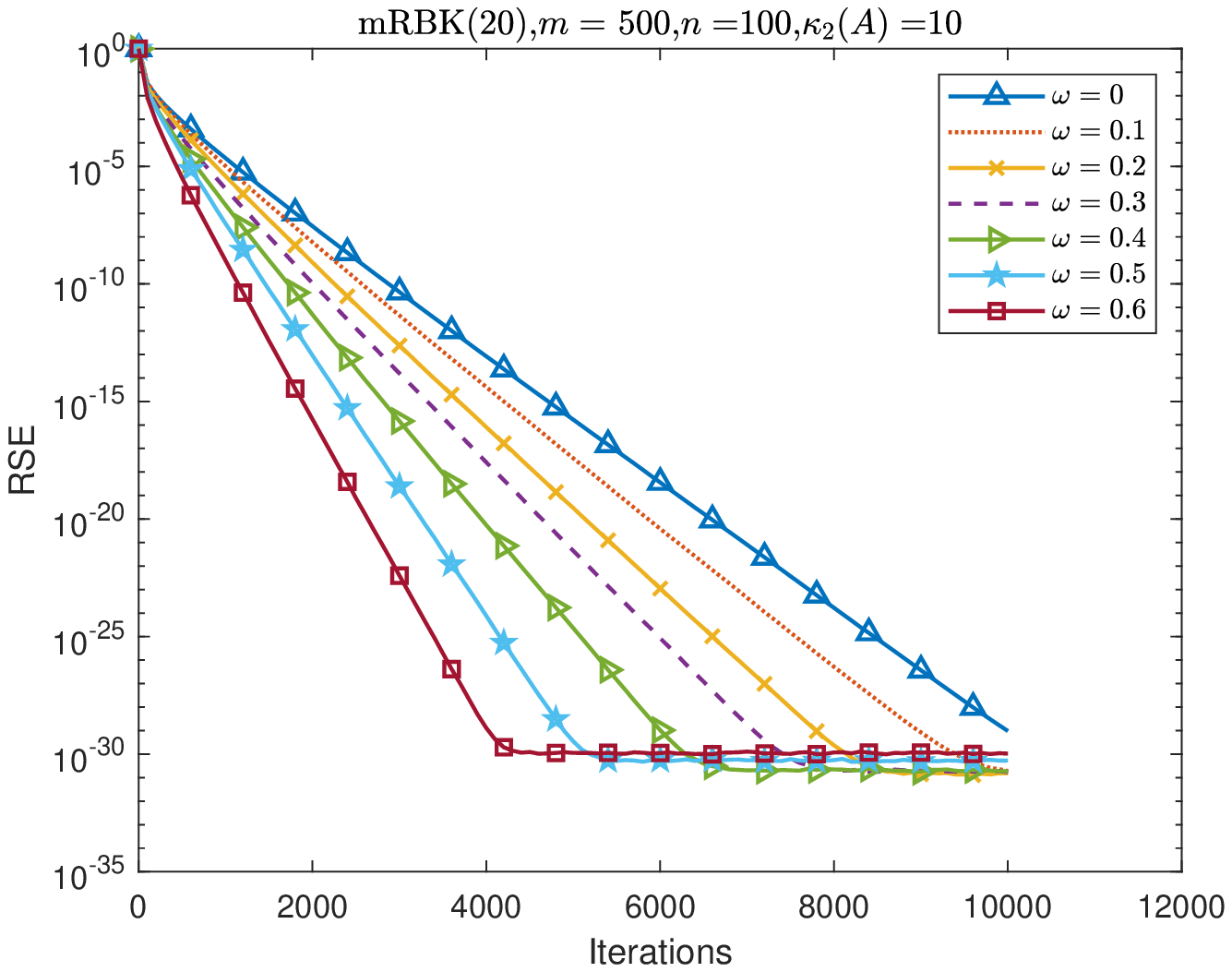}
        \includegraphics[width=0.24\linewidth]{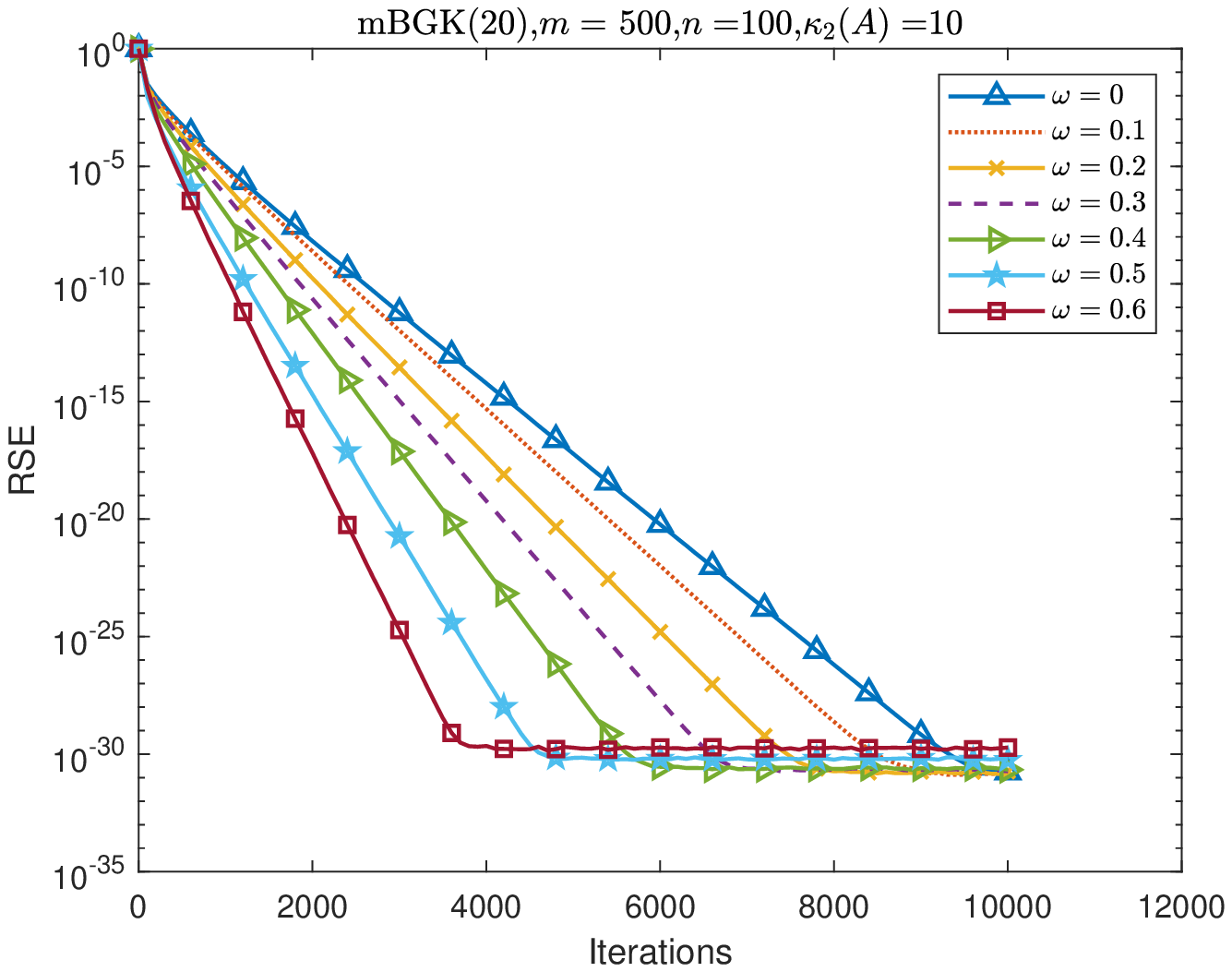}
        \includegraphics[width=0.24\linewidth]{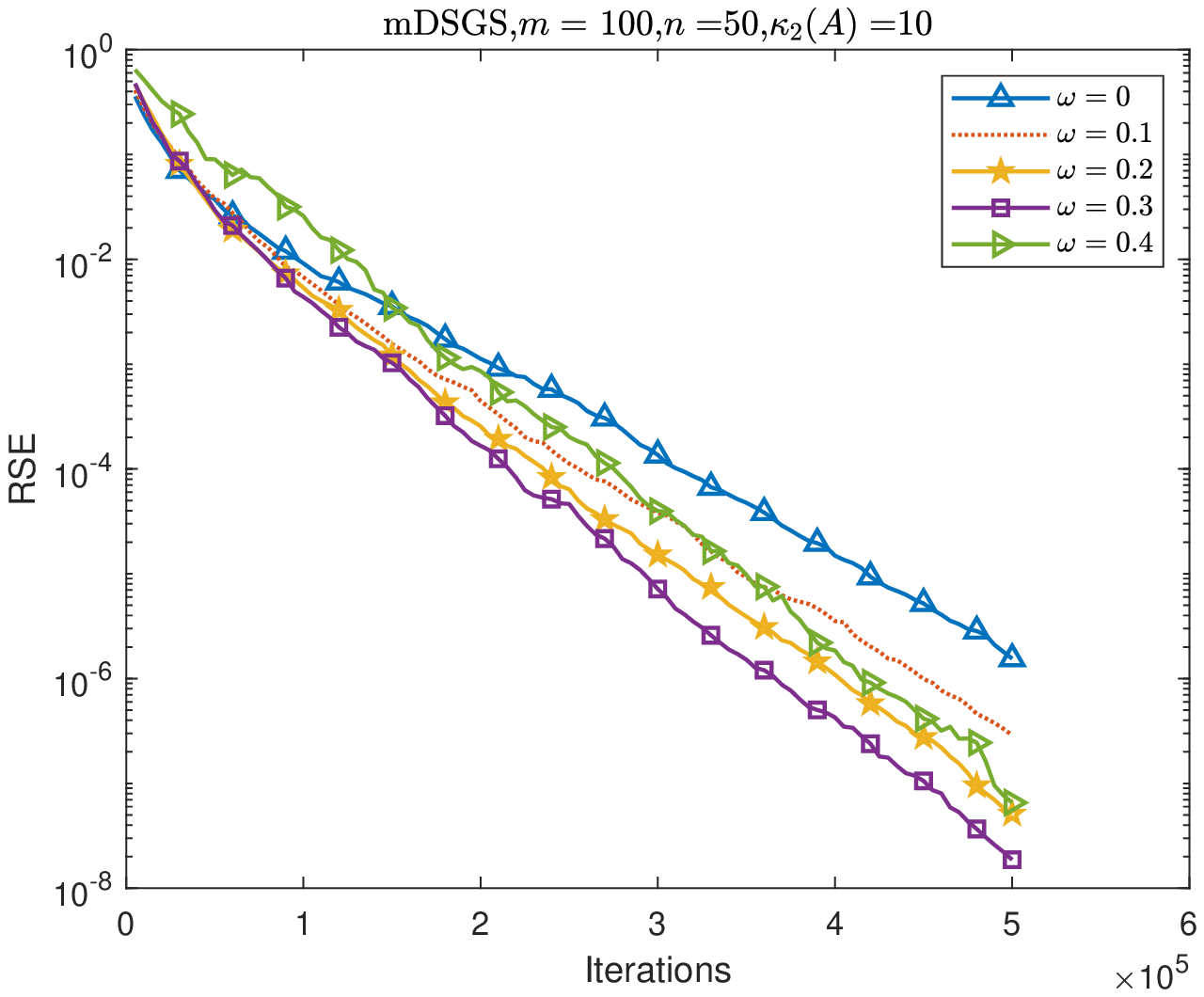}
        \\
        \includegraphics[width=0.24\linewidth]{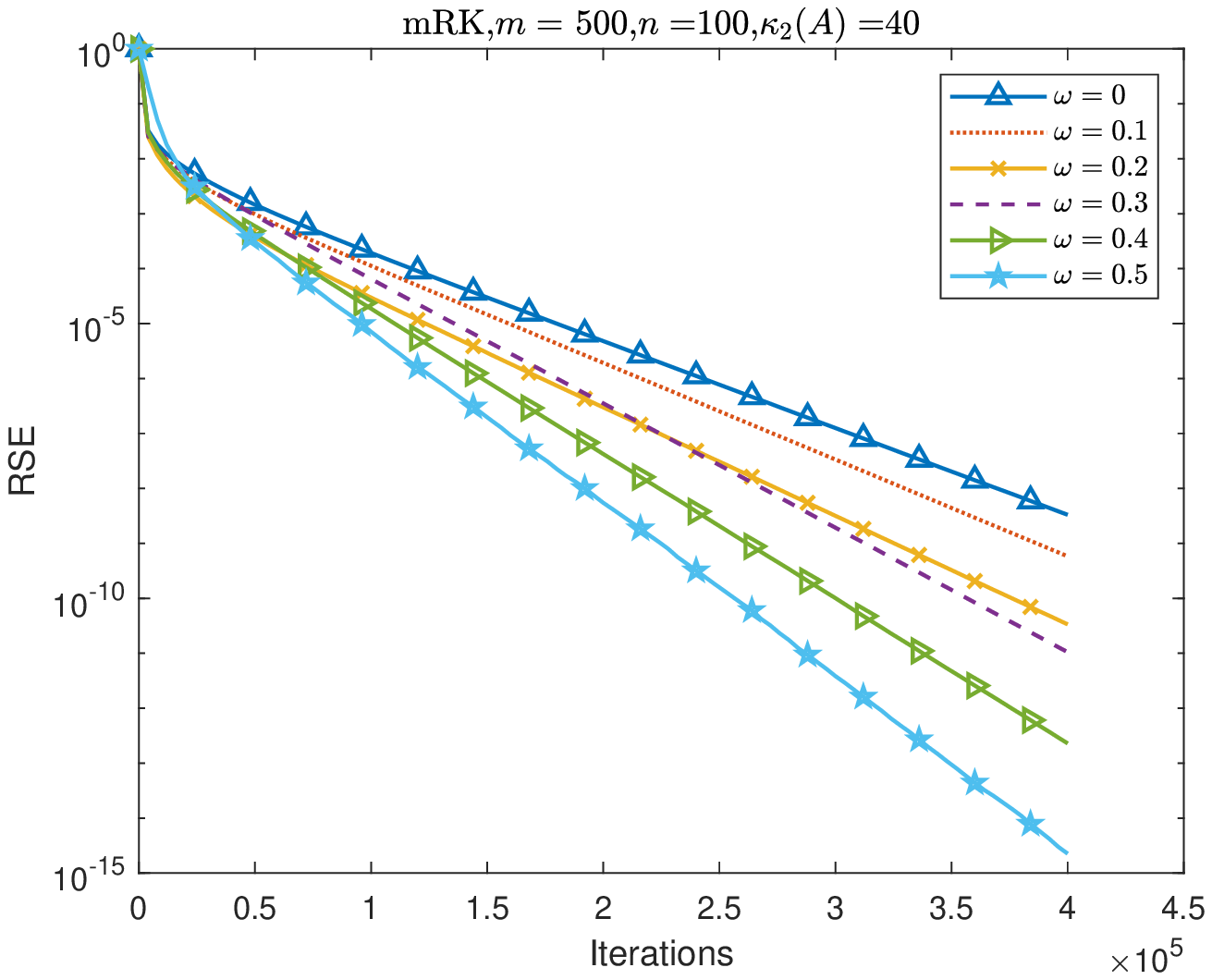}
        \includegraphics[width=0.24\linewidth]{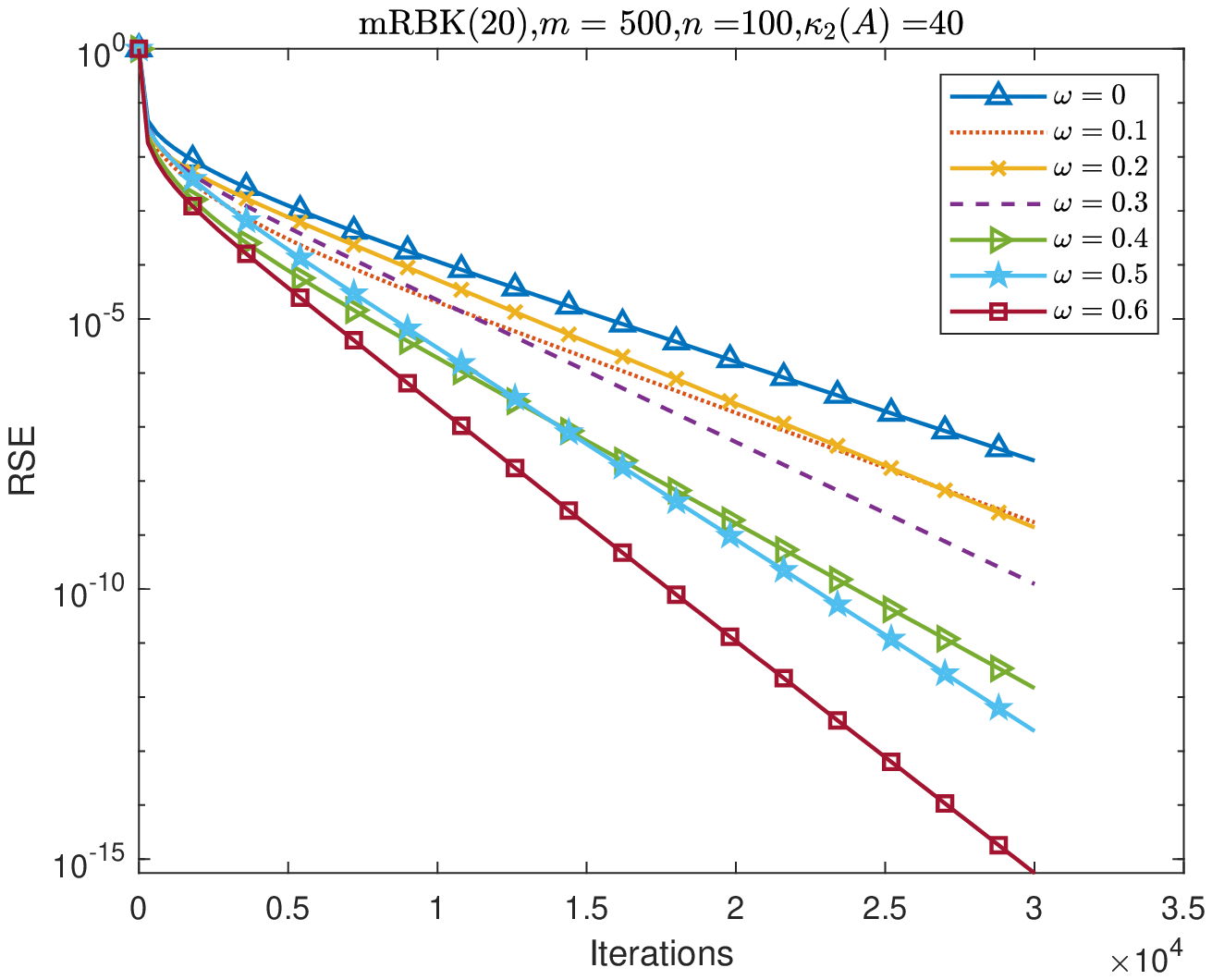}
        \includegraphics[width=0.24\linewidth]{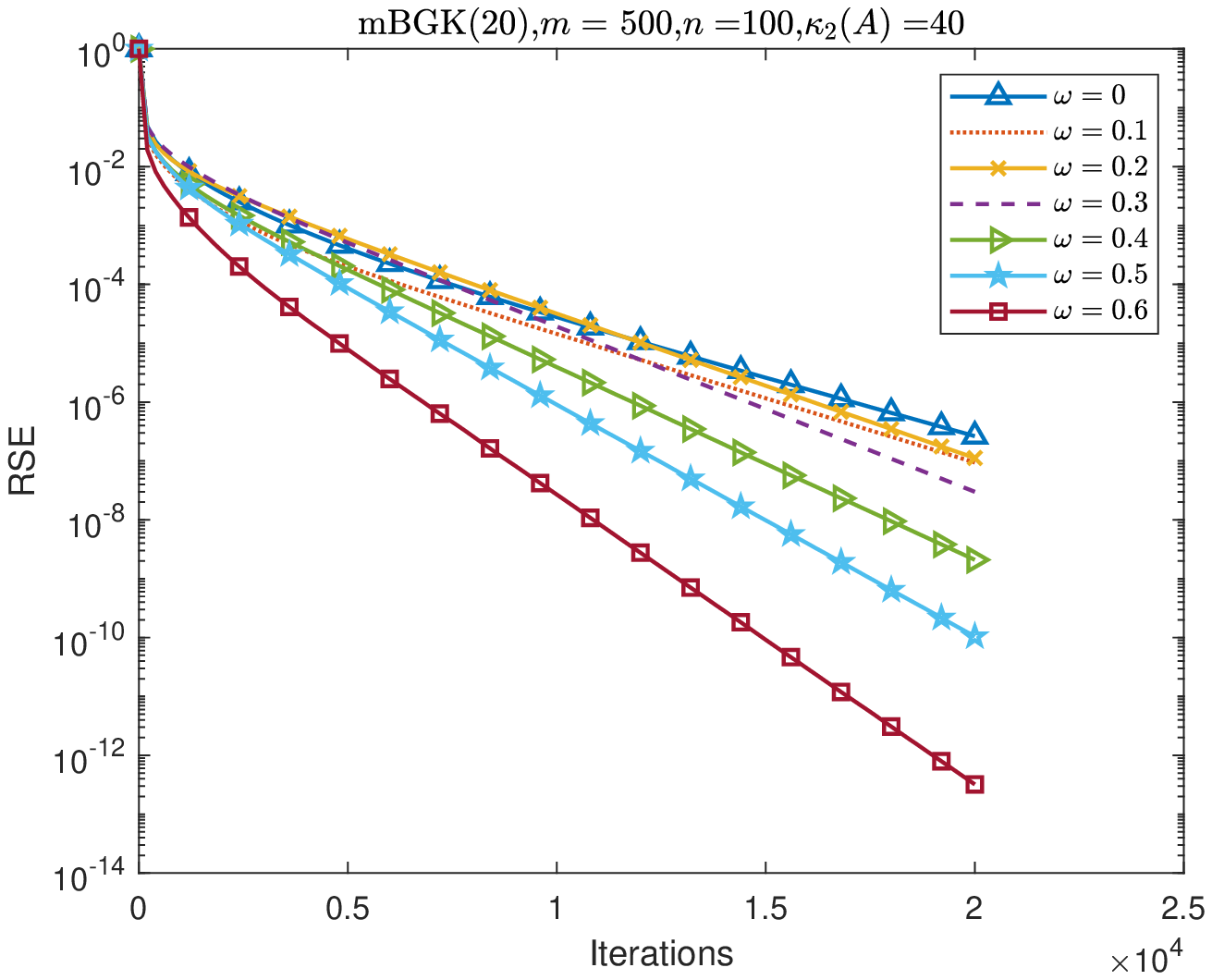}
        \includegraphics[width=0.24\linewidth]{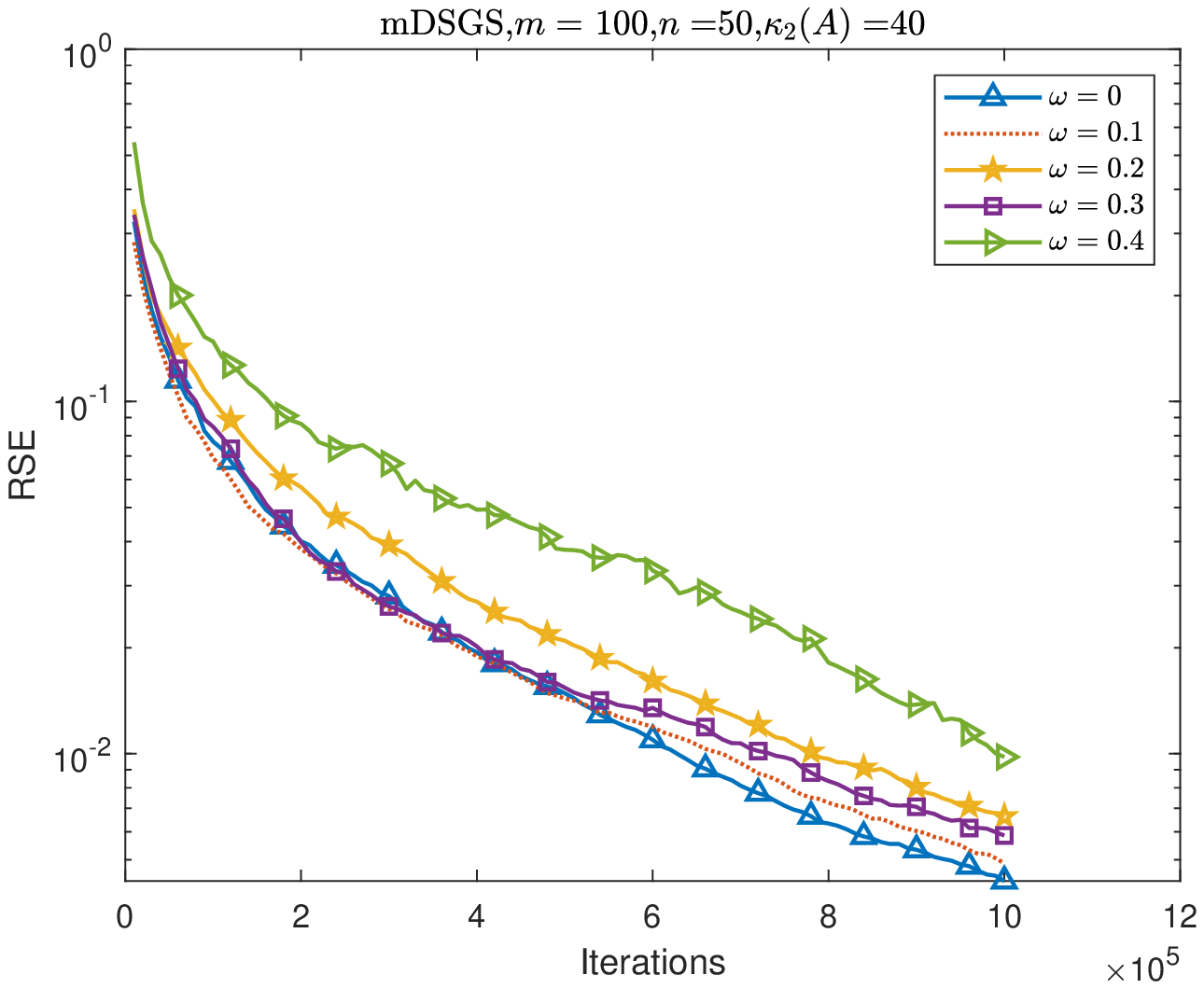}
	\end{tabular}
	\caption{Performance of mRK, mRBK, mBGK, and mDSGS with different momentum parameters $\omega$  for consistent linear systems with Gaussian matrix $A$. All plots are averaged over 10 trials. The title of each plot indicates the test algorithm,  the dimension of the matrix $A$, and the value of condition number $\kappa_2(A)$. We set $p=20$ for both mRBK and mBGK. }
	\label{figue722-1}
\end{figure}

\begin{figure}[hptb]
	\centering
	\begin{tabular}{cc}
        \includegraphics[width=0.24\linewidth]{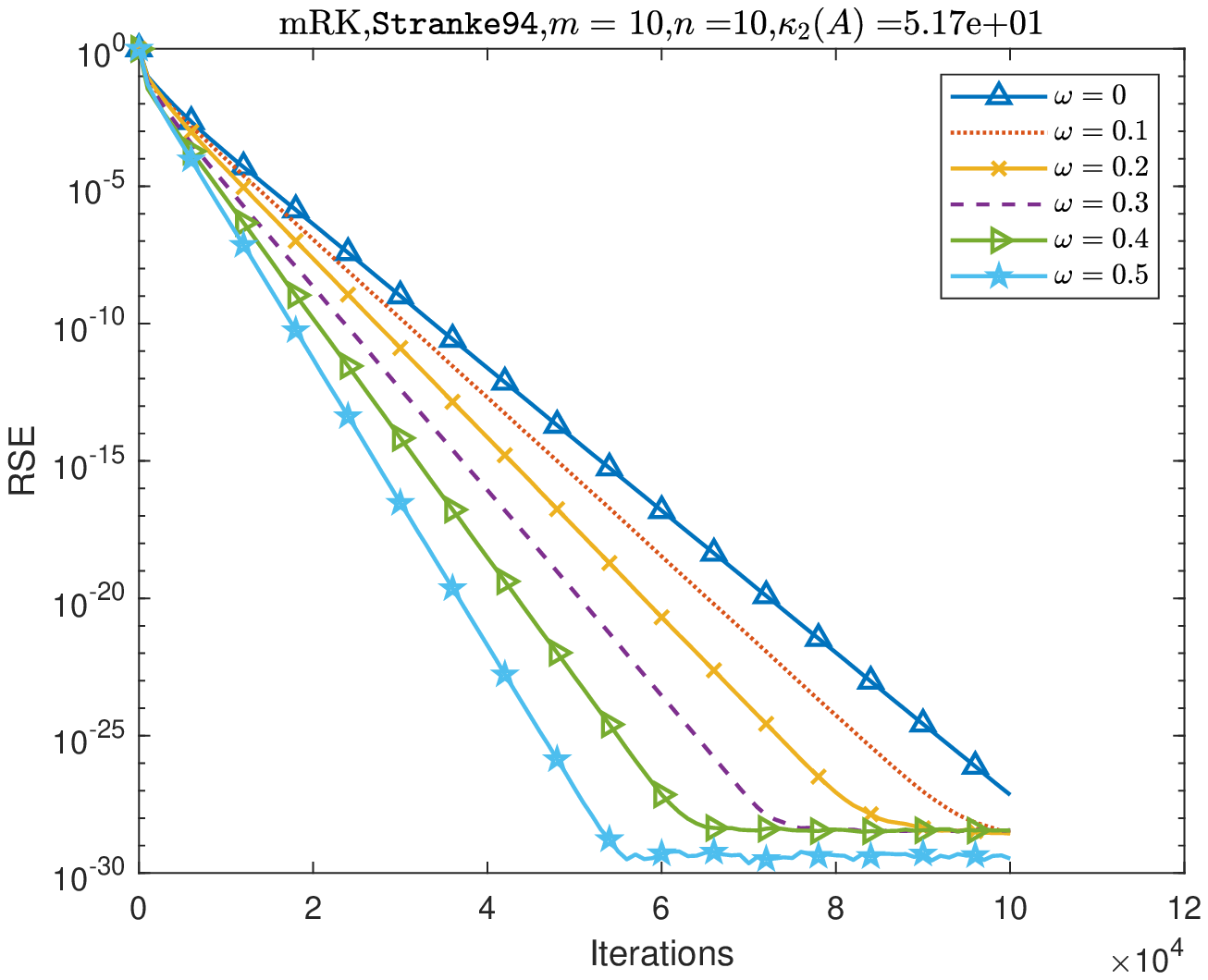}
        \includegraphics[width=0.24\linewidth]{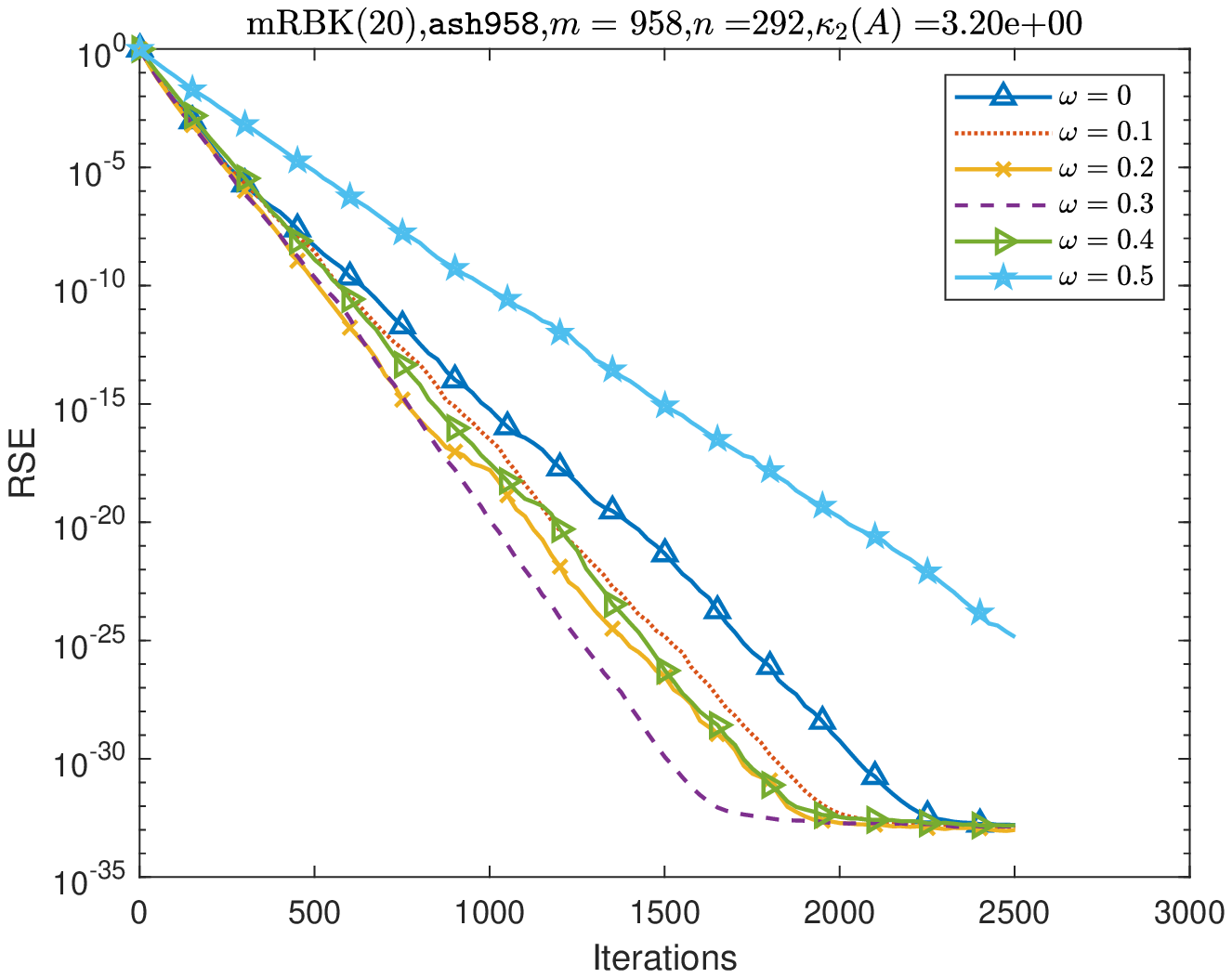}
        \includegraphics[width=0.24\linewidth]{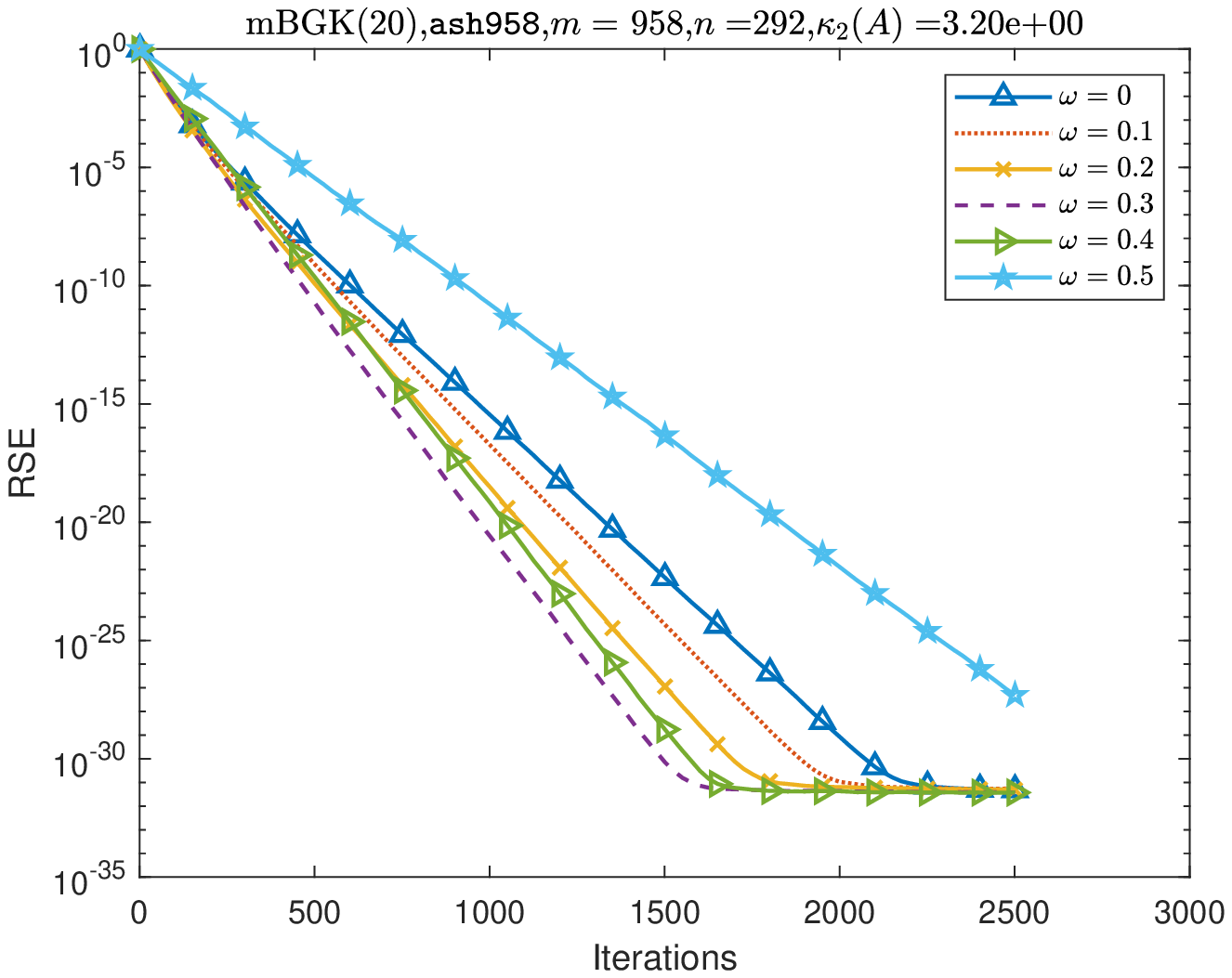}
        \includegraphics[width=0.24\linewidth]{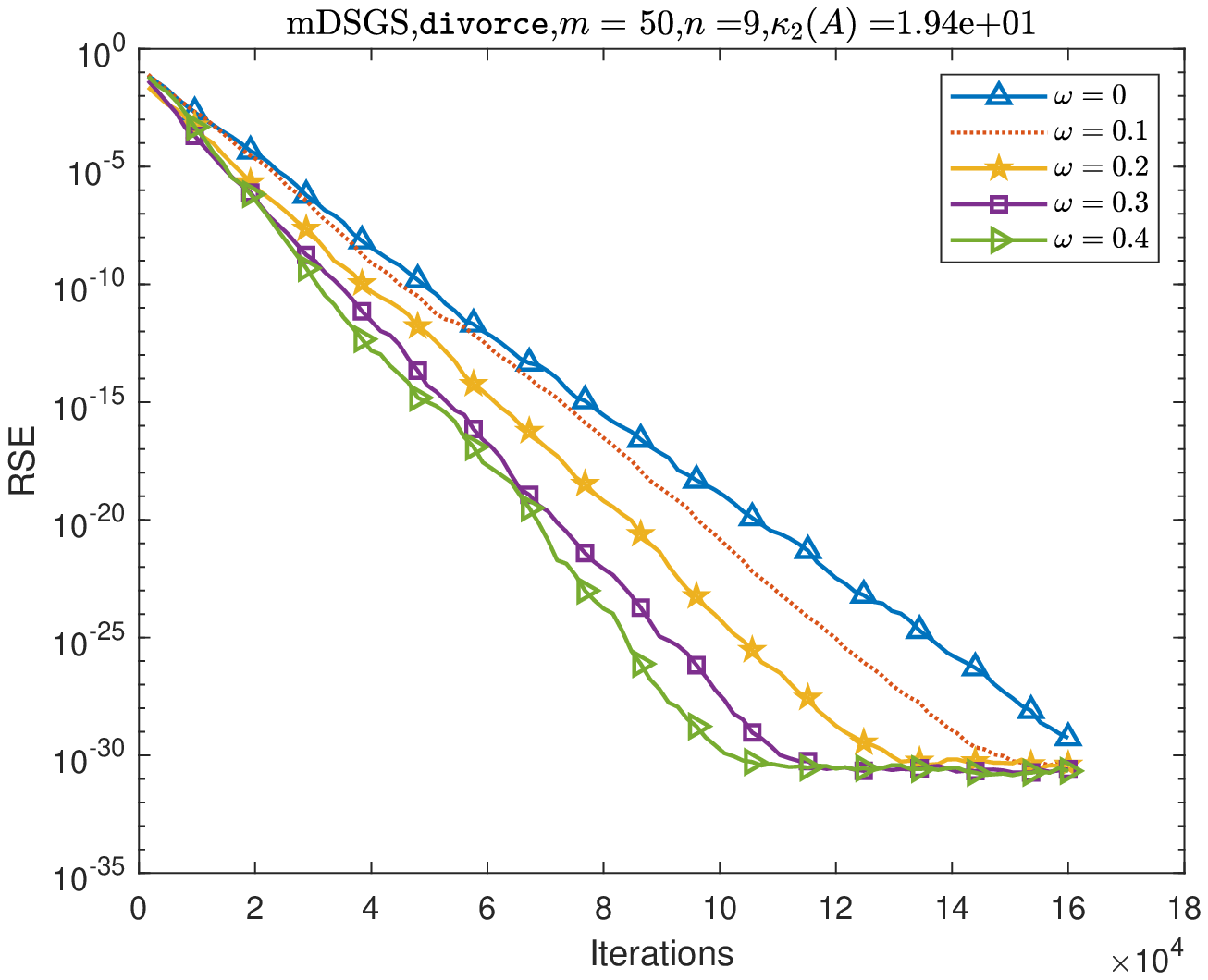}
        \\
        \includegraphics[width=0.24\linewidth]{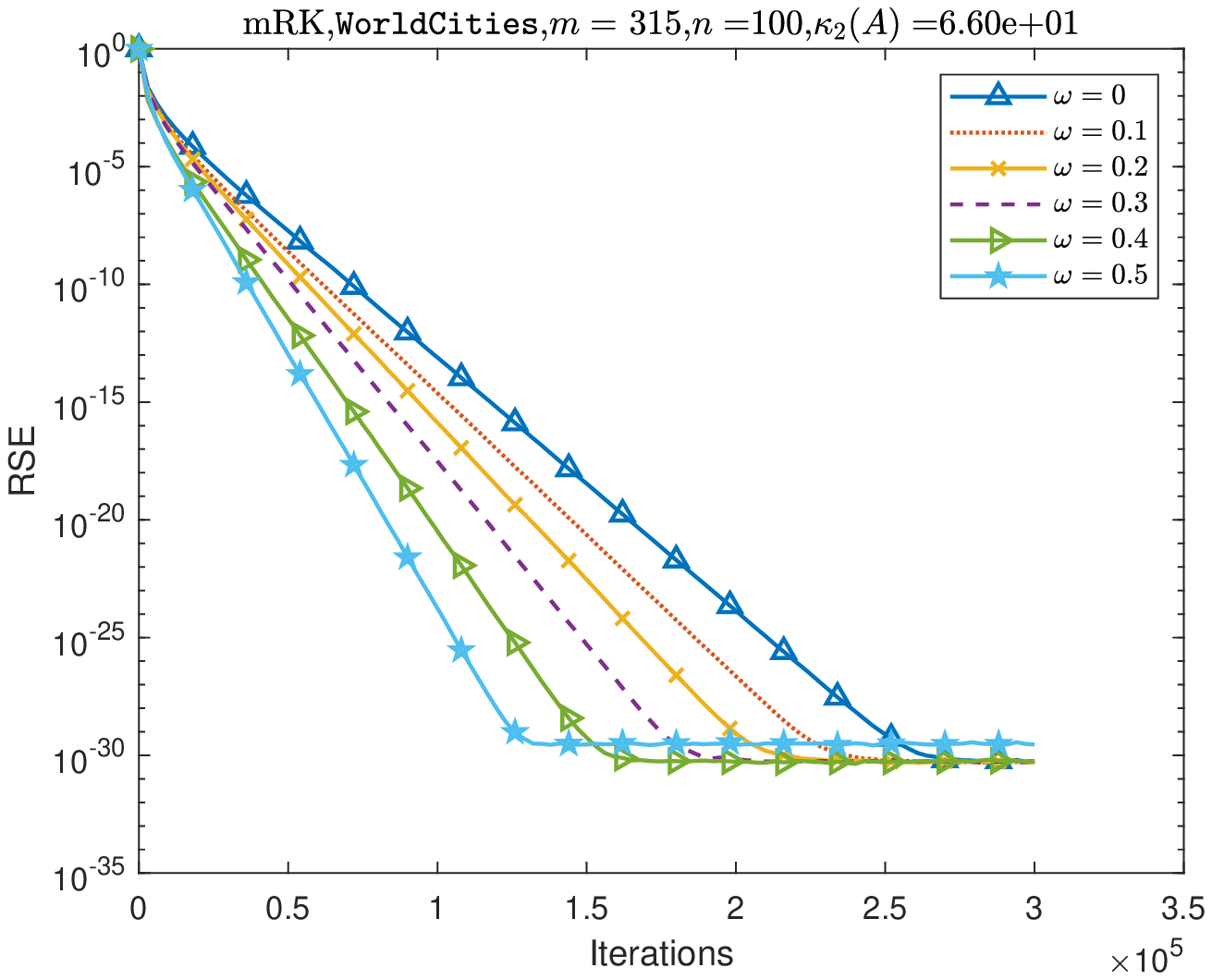}
        \includegraphics[width=0.24\linewidth]{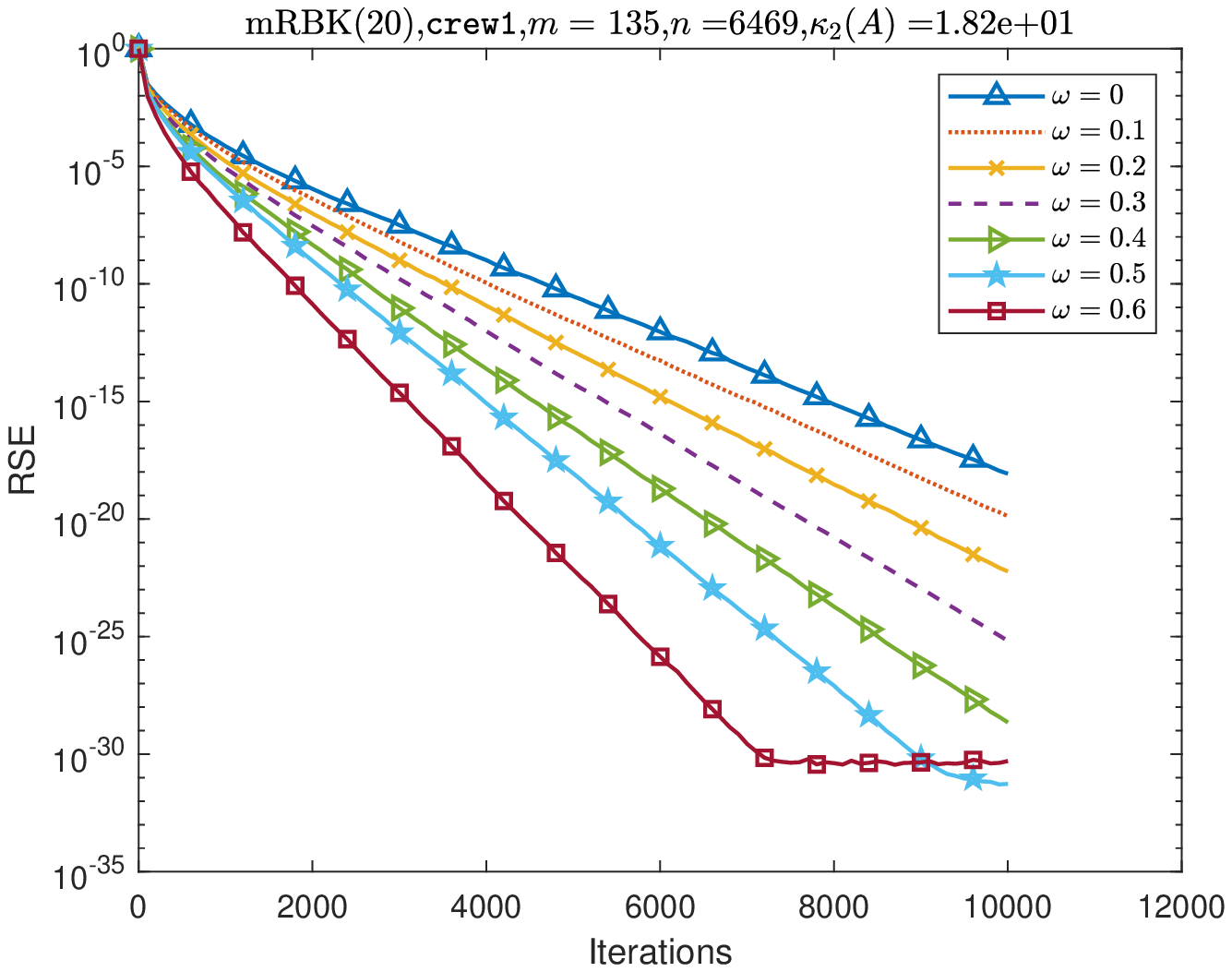}
        \includegraphics[width=0.24\linewidth]{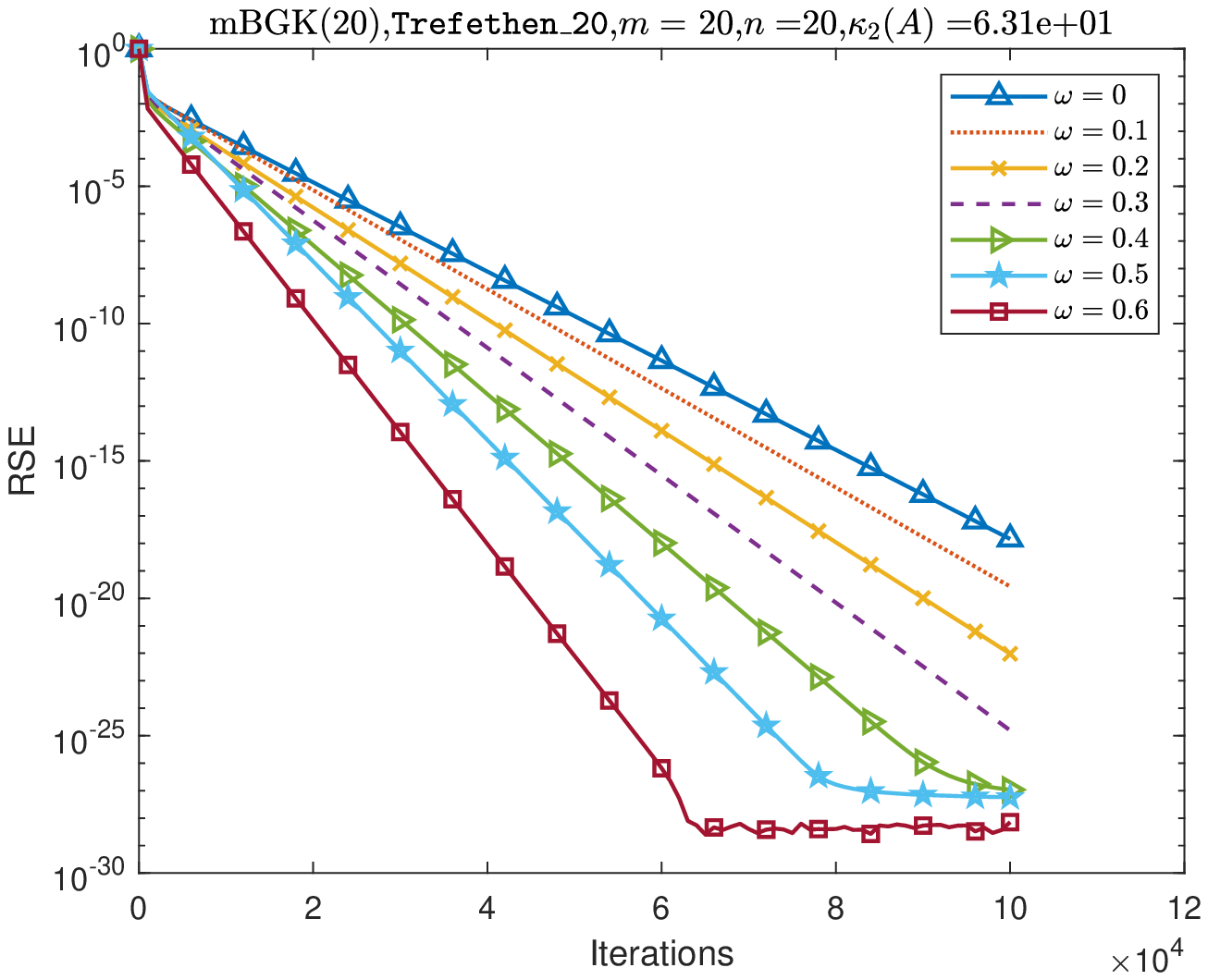}
        \includegraphics[width=0.24\linewidth]{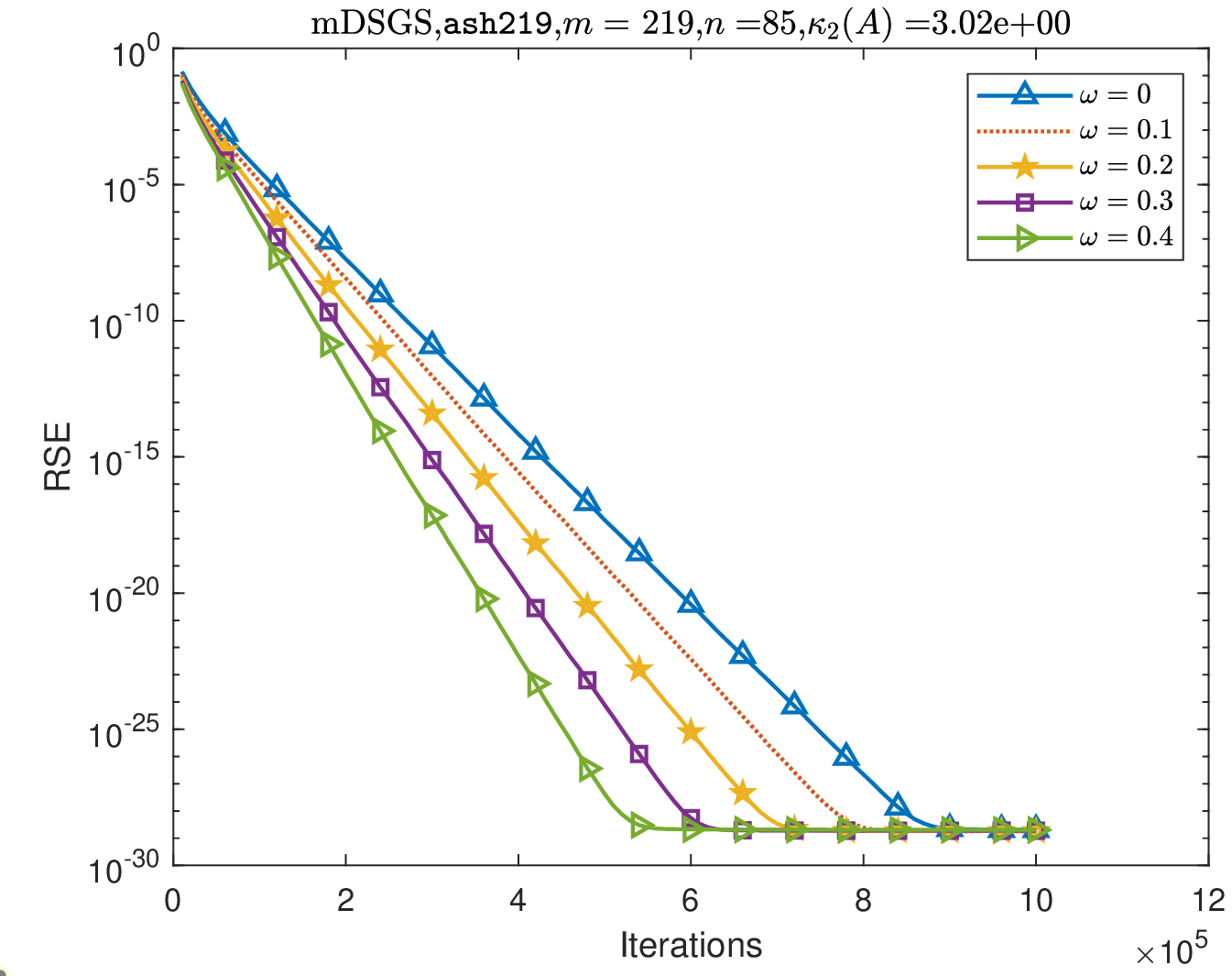}
	\end{tabular}
	\caption{Performance of mRK, mRBK, mBGK, and mDSGS with different momentum parameters $\omega$  for consistent linear systems. The coefficient matrices are from  SuiteSparse Matrix Collection \cite{Kol19}. All plots are averaged over 10 trials. The title of each plot indicates the test algorithm,  the dimension of the matrix $A$, and the value of condition number $\kappa_2(A)$. We set $p=20$ for both mRBK and mBGK.}
	\label{figue723-2}
\end{figure}

\begin{figure}[hptb]
	\centering
	\begin{tabular}{cc}
        \includegraphics[width=0.33\linewidth]{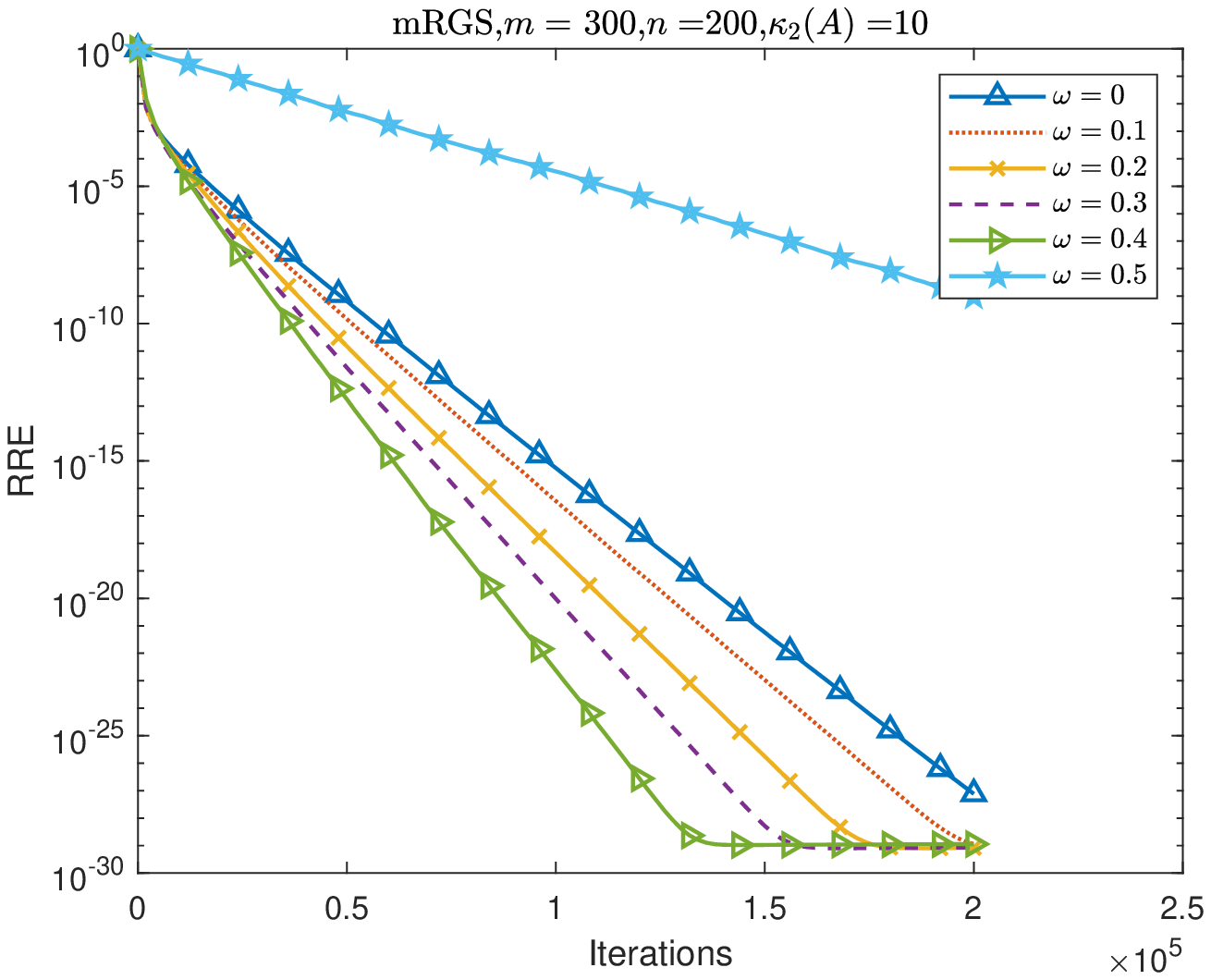}
        \includegraphics[width=0.33\linewidth]{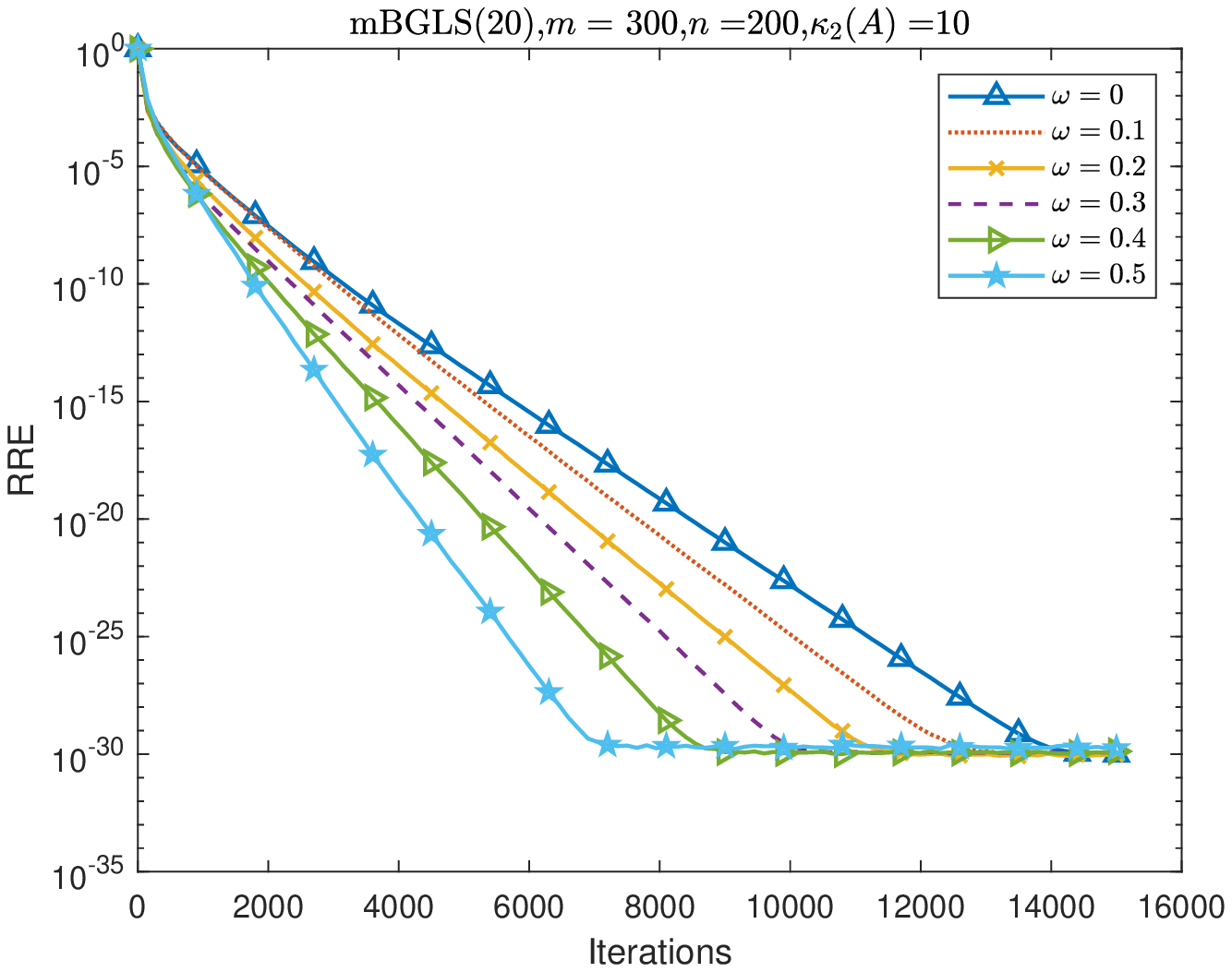}
        \includegraphics[width=0.33\linewidth]{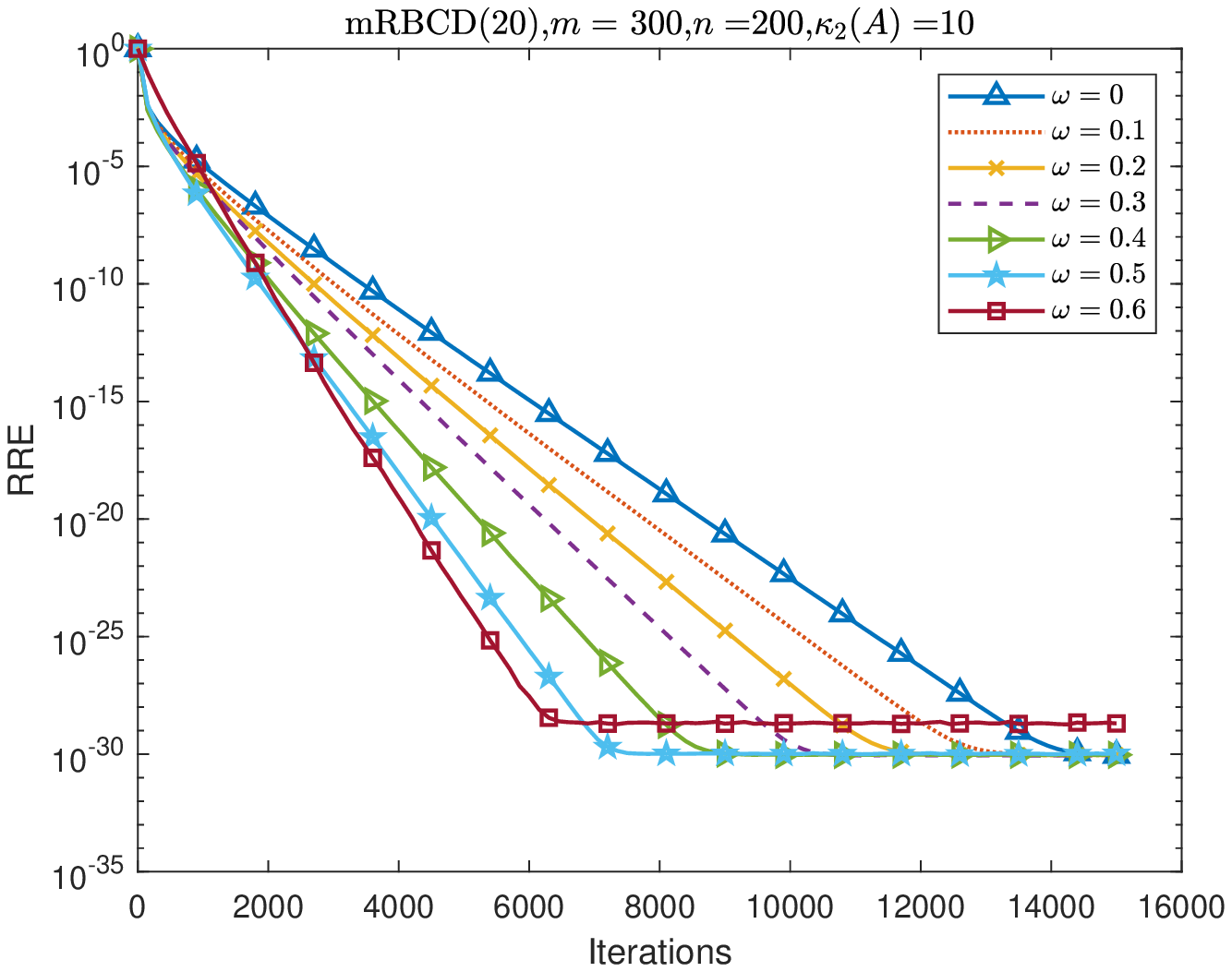}\\
        \includegraphics[width=0.33\linewidth]{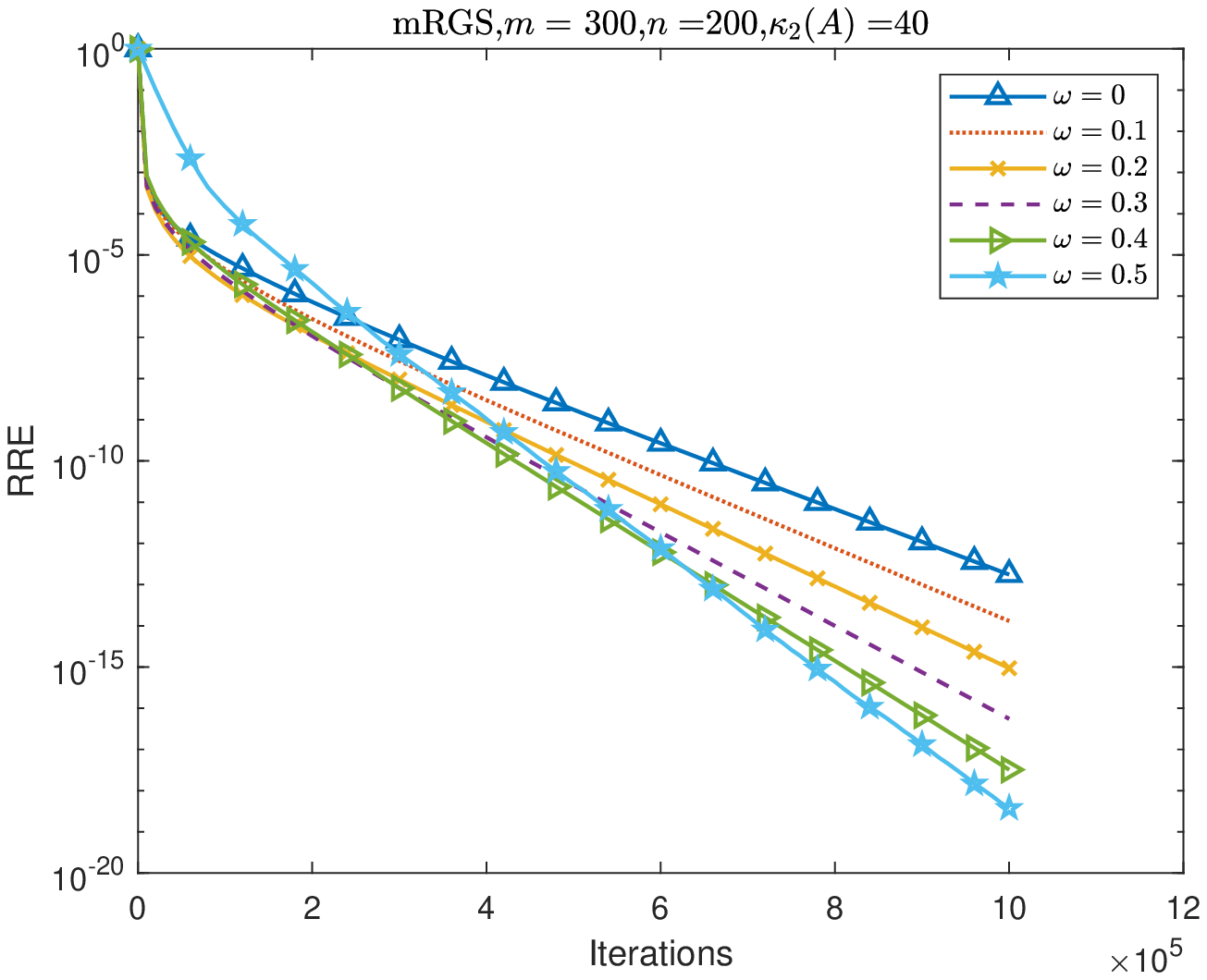}
        \includegraphics[width=0.33\linewidth]{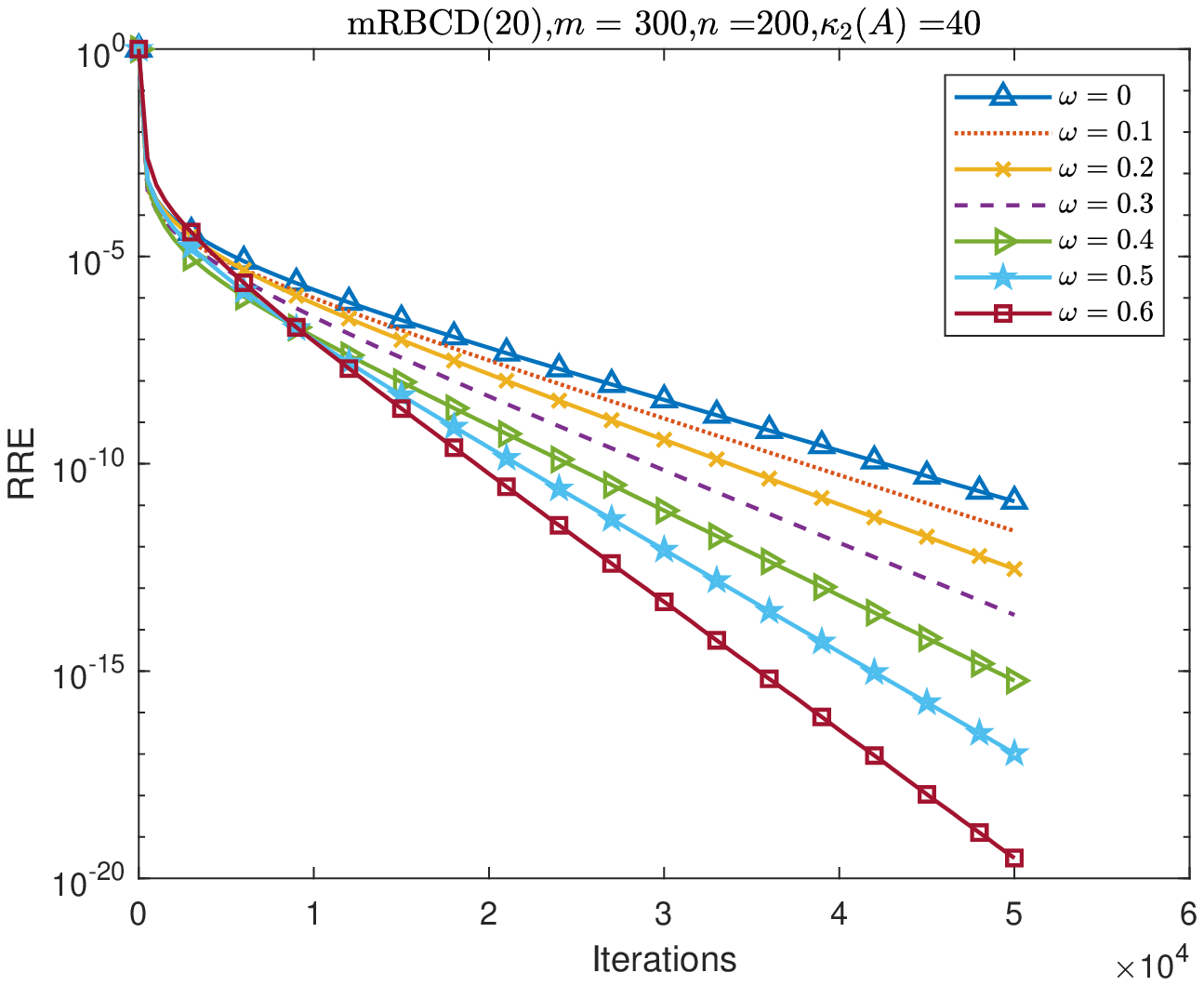}
        \includegraphics[width=0.33\linewidth]{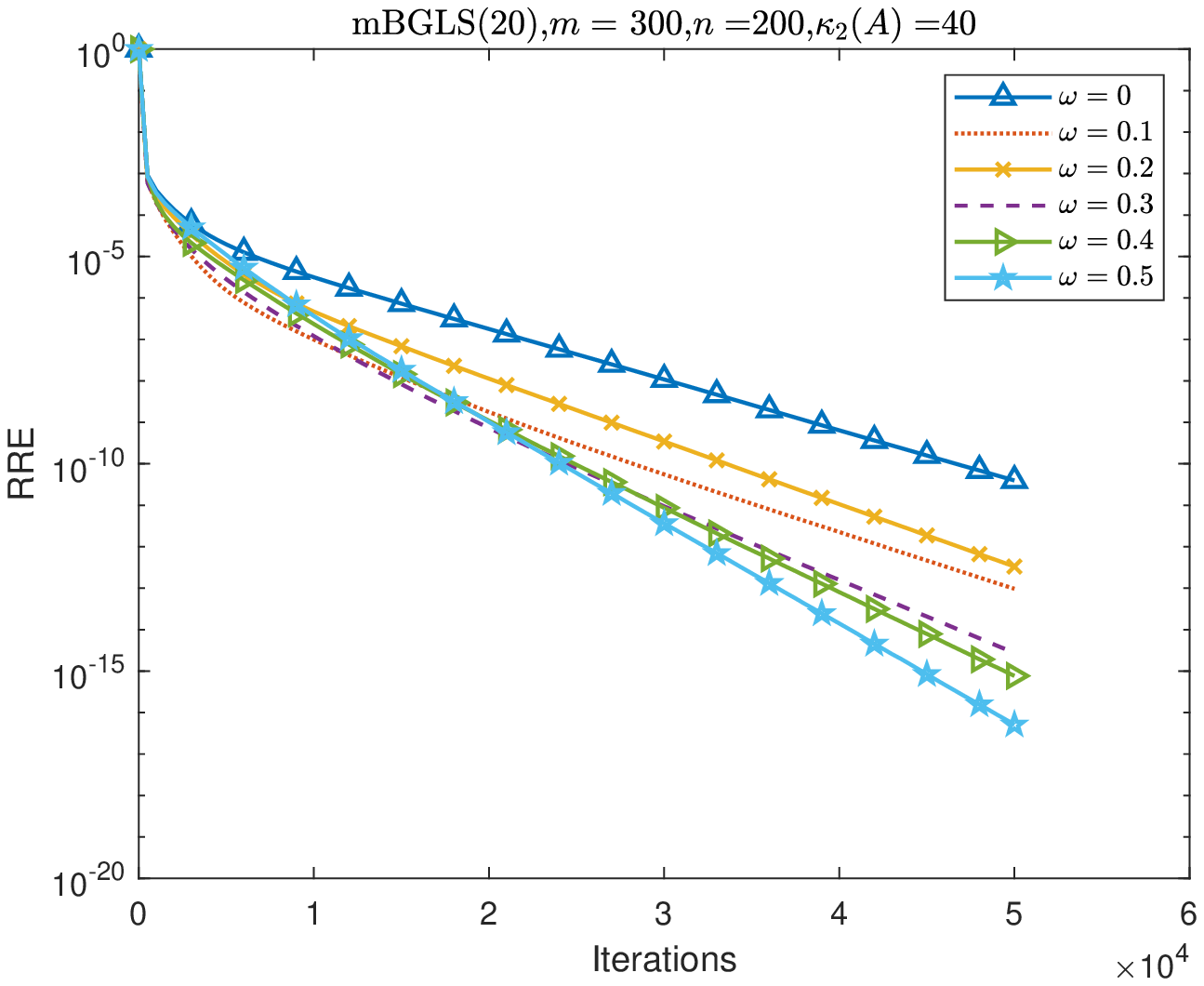}
	\end{tabular}
	\caption{Performance of mRGS, mRBCD, and mBGLS with different momentum parameters $\omega$  for inconsistent linear systems with Gaussian matrix $A$. All plots are averaged over 10 trials. The title of each plot indicates the test algorithm,  the dimension of the matrix $A$, and the value of condition number $\kappa_2(A)$. We set $s=20$ for both mRBCD and mBGLS.}
	\label{figue722-2}
\end{figure}

\begin{figure}[hptb]
	\centering
	\begin{tabular}{cc}
        \includegraphics[width=0.33\linewidth]{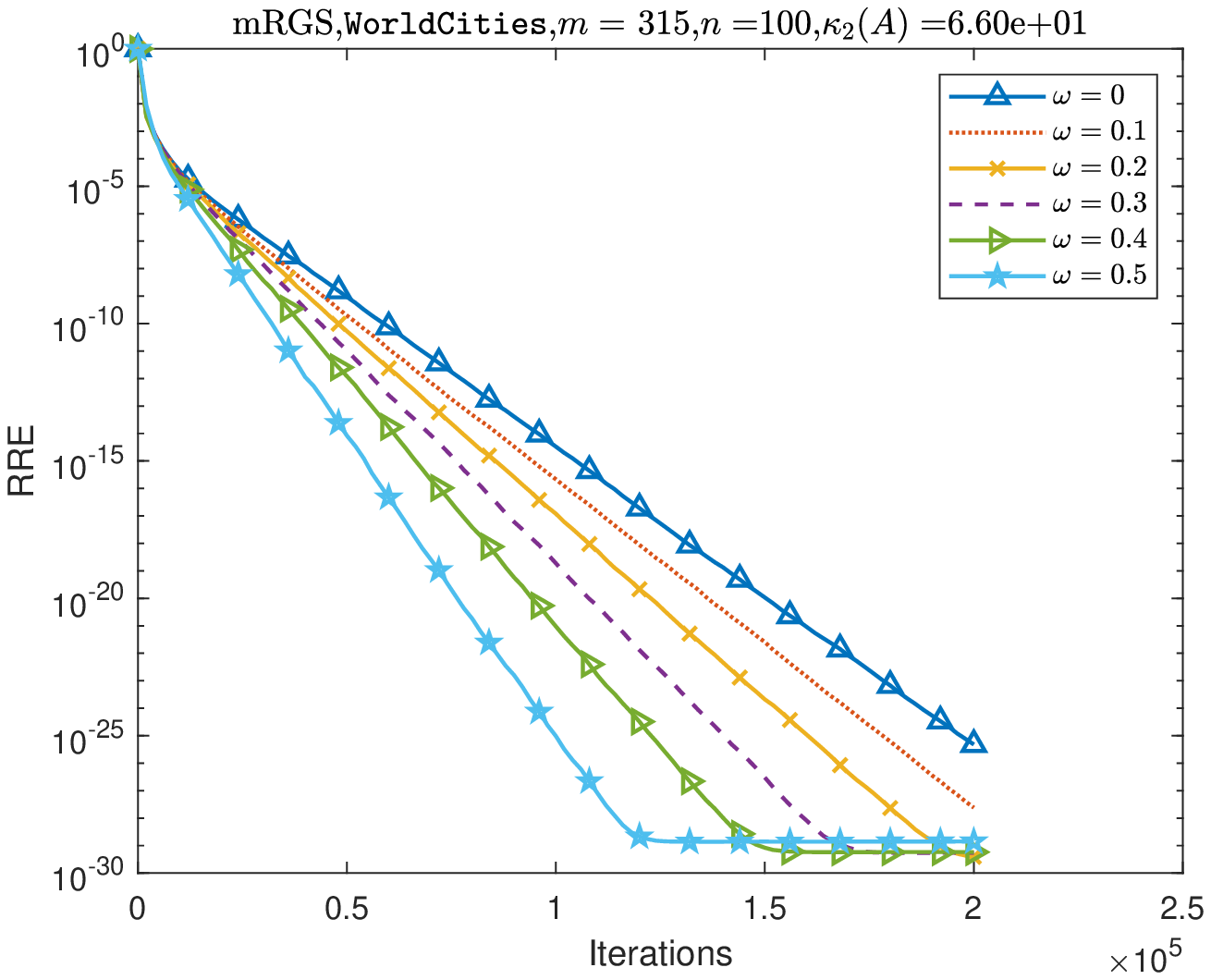}
        \includegraphics[width=0.33\linewidth]{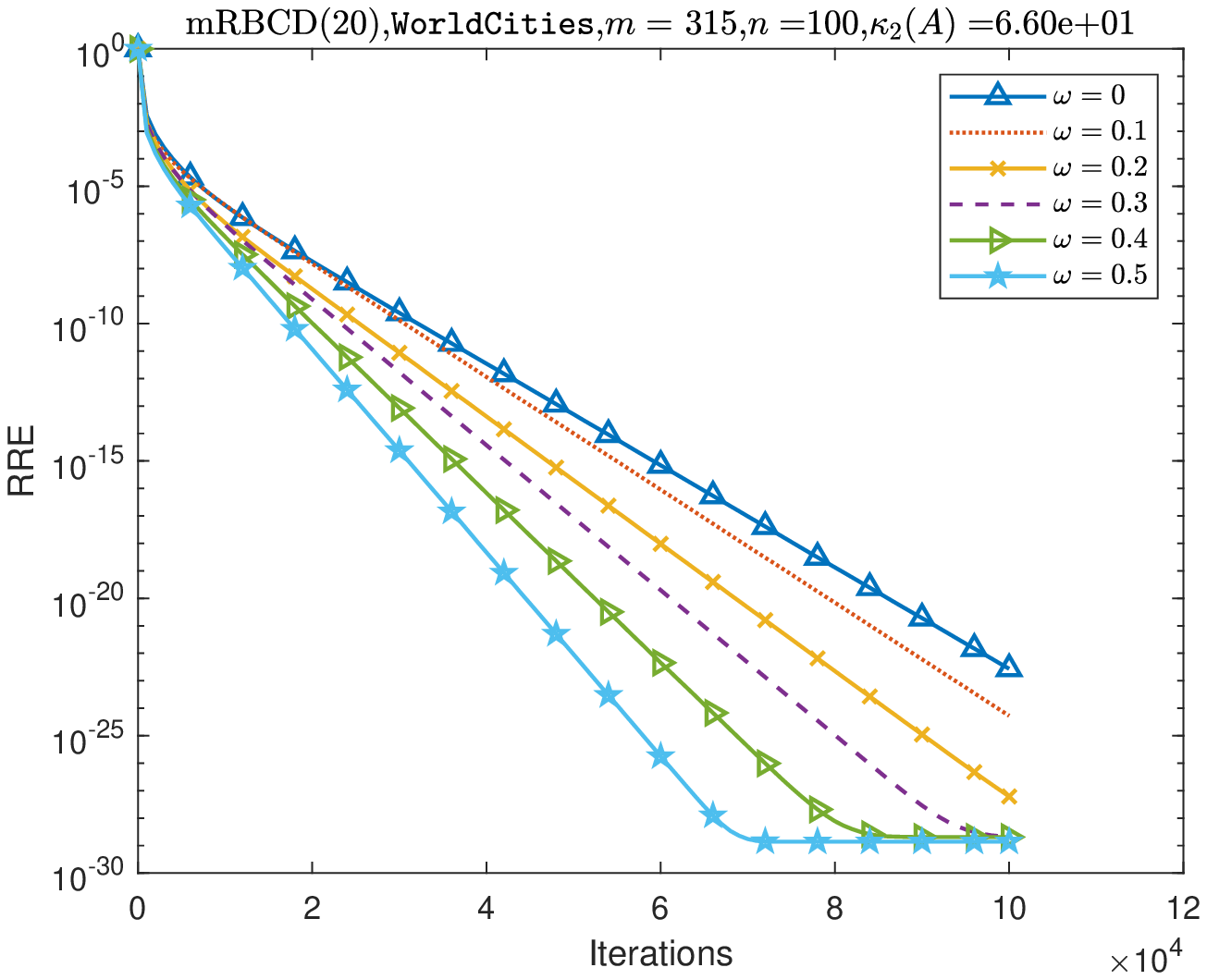}
        \includegraphics[width=0.33\linewidth]{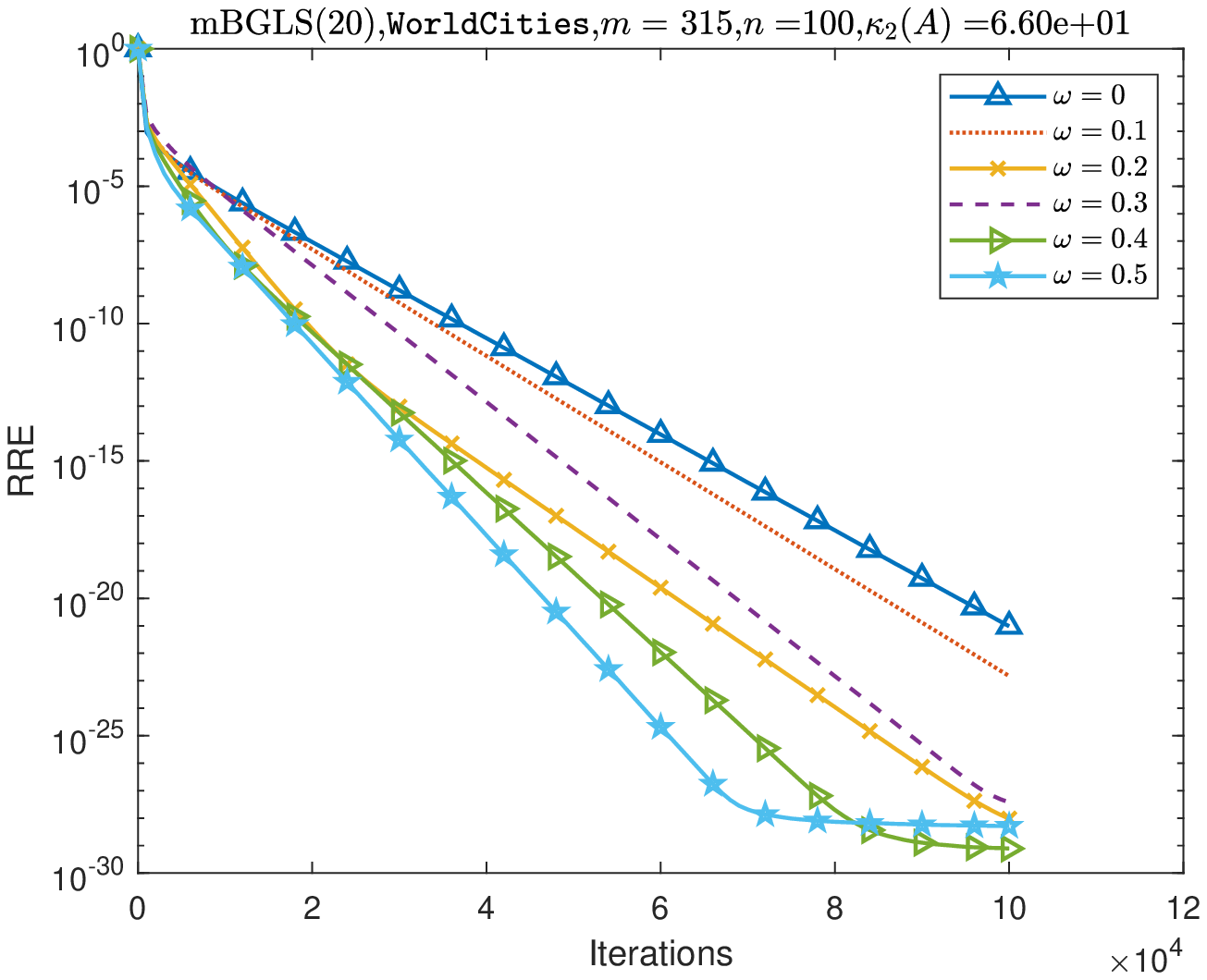}
        \\
        \includegraphics[width=0.33\linewidth]{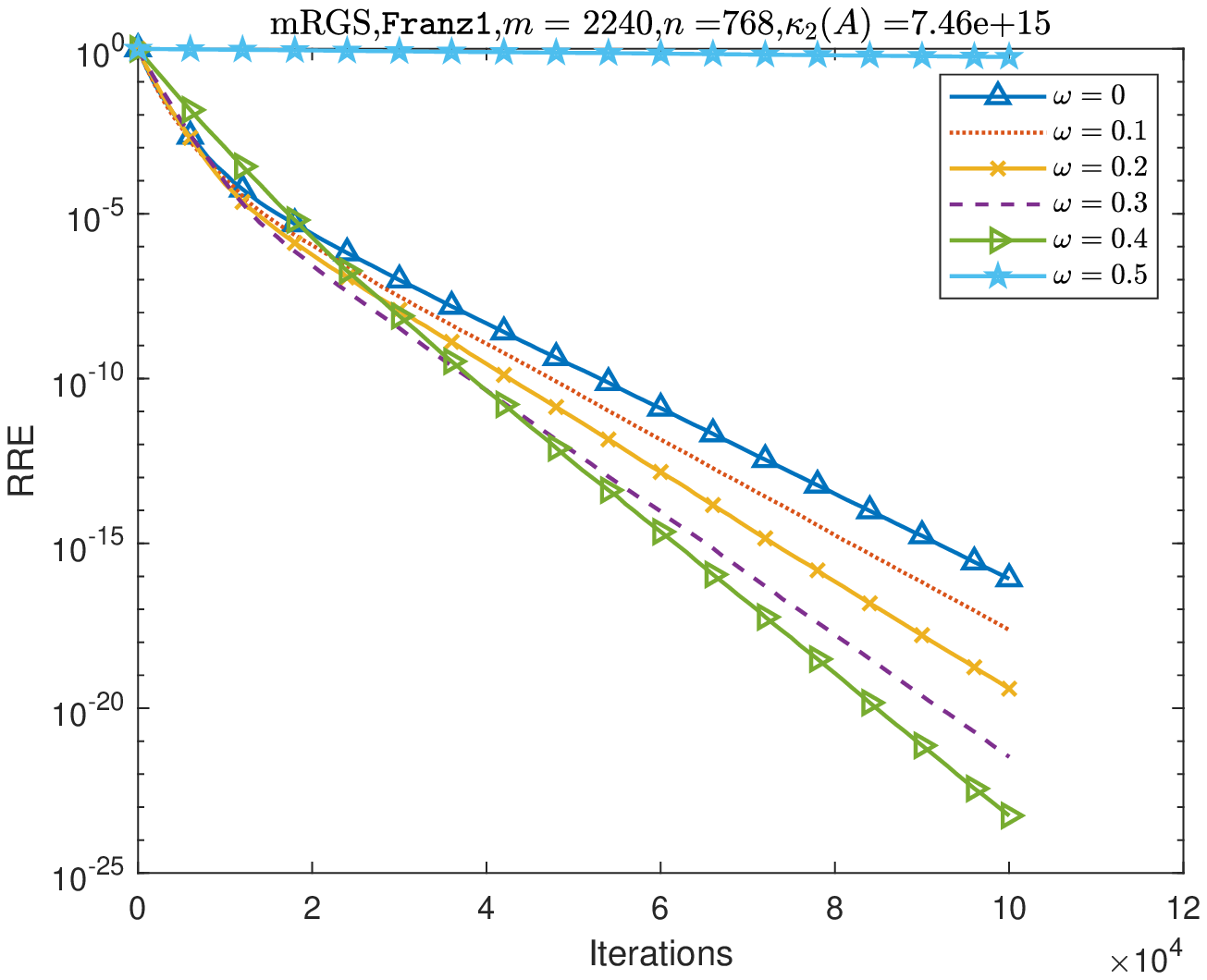}
        \includegraphics[width=0.33\linewidth]{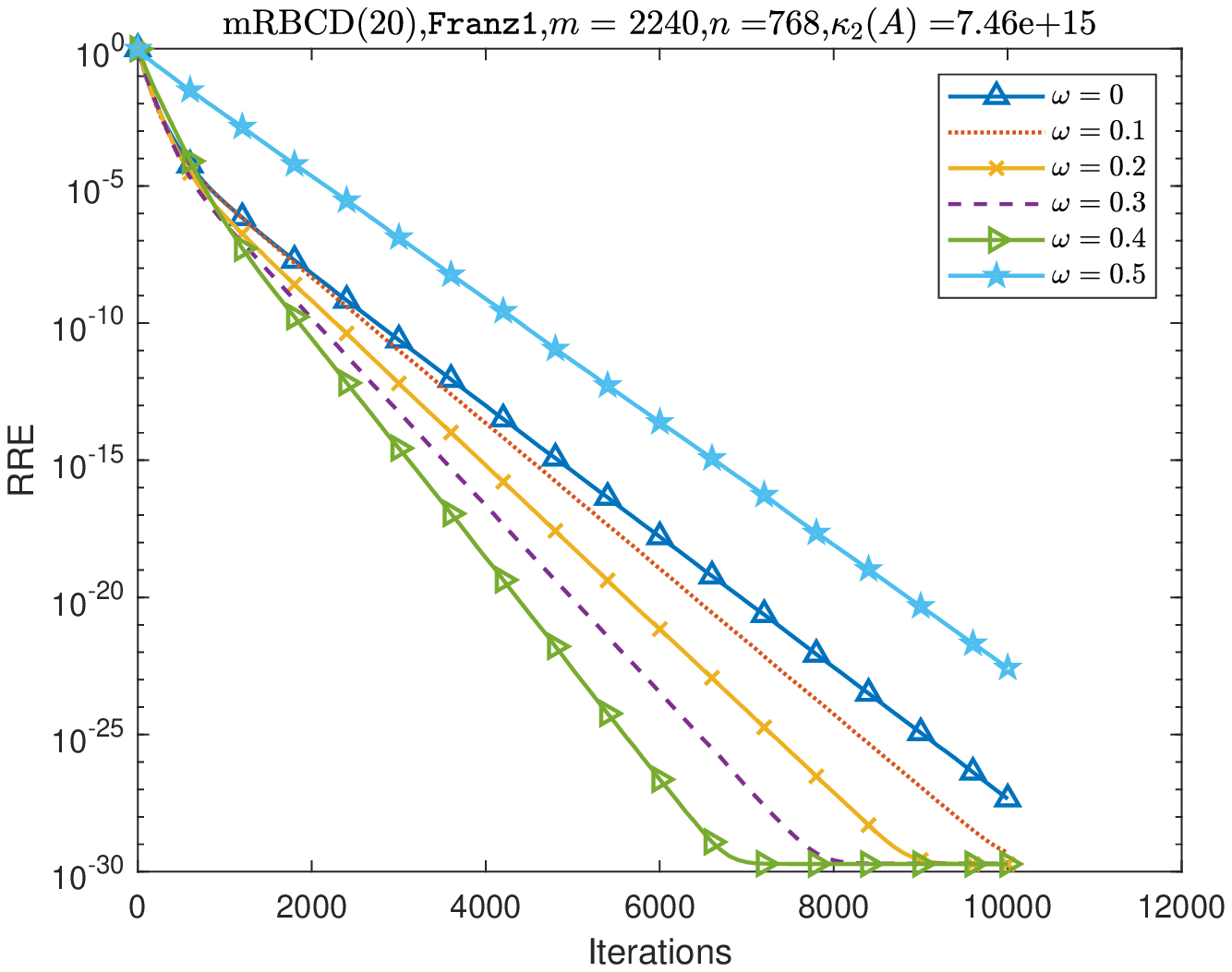}
        \includegraphics[width=0.33\linewidth]{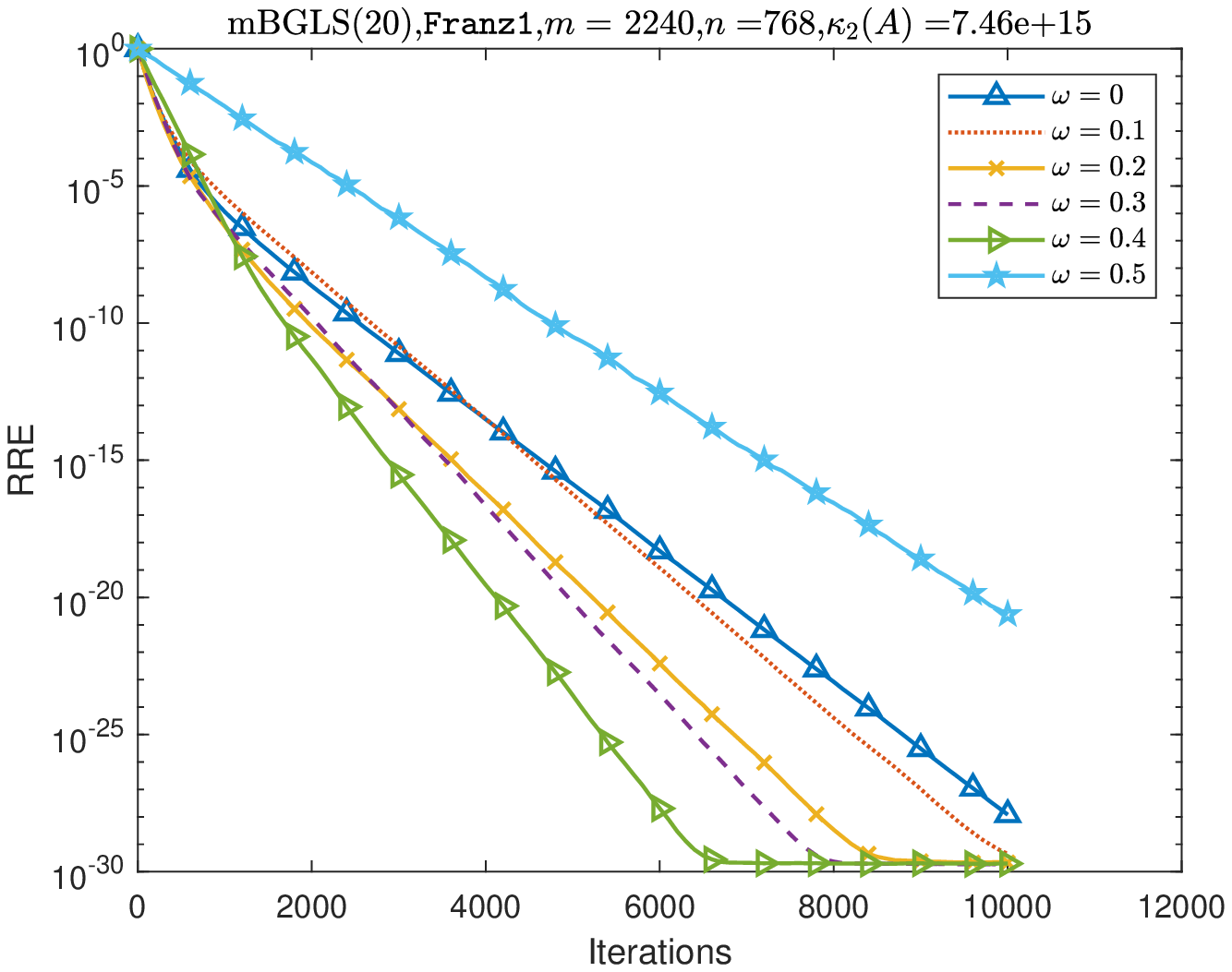}
	\end{tabular}
	\caption{Performance of mRGS, mRBCD, and mBGLS with different momentum parameters $\omega$  for inconsistent linear systems. The coefficient matrices are from  SuiteSparse Matrix Collection \cite{Kol19}. All plots are averaged over 10 trials. The title of each plot indicates the test algorithm,  the dimension of the matrix $A$, and the value of condition number $\kappa_2(A)$. We set $s=20$ for both mRBCD and mBGLS.}
	\label{figue723-1}
\end{figure}

\subsection{Comparison among the proposed methods}
In this subsection, we will compare mRK, mRBK, and mBGK for solving the consistent linear systems and compare mRGS, mRBCD, and mBGLS for solving the inconsistent linear systems.
 In comparing the methods, we use $x^0=0\in\mathbb{R}^n$ or $x^1=x^0=0\in\mathbb{R}^n$ as an initial point. The computations are terminated once RSE or RRE is less than $10^{-12}$.
For the horizontal axis, we use wall-clock time measured using the {\tt tic-toc} {\sc Matlab}
function.

First we compare  mRK, mRBK, and mBGK on synthetic linear systems generated with the {\sc Matlab}
functions {\tt randn} and {\tt sprandn}; see Figure \ref{figue724-1}.
The poor performance of mBGK on the dense problem generated using {\tt randn} can be found in Figure \ref{figue724-1} (a).
This is because that the Gaussian method almost always requires more flops to reach a solution as it requires the expensive matrix-vector product. We can observe from  Figure \ref{figue724-1}  (a) that mRBK performs better than mRK and mBGK on the dense problem.
In Figure \ref{figue724-1}  (b), we compare the methods on a sparse linear system generated
using the {\sc Matlab} sparse random matrix function {\tt sprandn(m,n,density,rc)},
where {\tt density} is the percentage of nonzero entries and {\tt rc} is the reciprocal of the
condition number. On this sparse problem, mBGK is more efficient than mRK and mRBK.

In Figure \ref{figue724-3}, we compare mRGS, mRBCD, and mBGLS on inconsistent linear system.
The high iteration cost of mBGLS resulted in poor performance on the dense problem generated using {\tt rand} can be found in Figure \ref{figue724-3}  (a). On this dense problem, mRBCD is more efficient. From Figure \ref{figue724-3}  (b), we can see that mBGLS performs better than mRGS and mRBCD for solving sparse problems. We note that since {\sc Matlab} performs automatic multithreading when calculating matrix-vector products, which was the bottleneck cost in the Gaussian sampling based methods.
Hence despite the higher iteration cost of the Gaussian sampling based methods, their performance, in
terms of the wall-clock time, is comparable to that of other methods when the coefficient matrix is sparse.

\begin{figure}[hptb]
 \centering
 \begin{tabular}{cc}
 \subfigure[]{\includegraphics[width=0.48\linewidth]{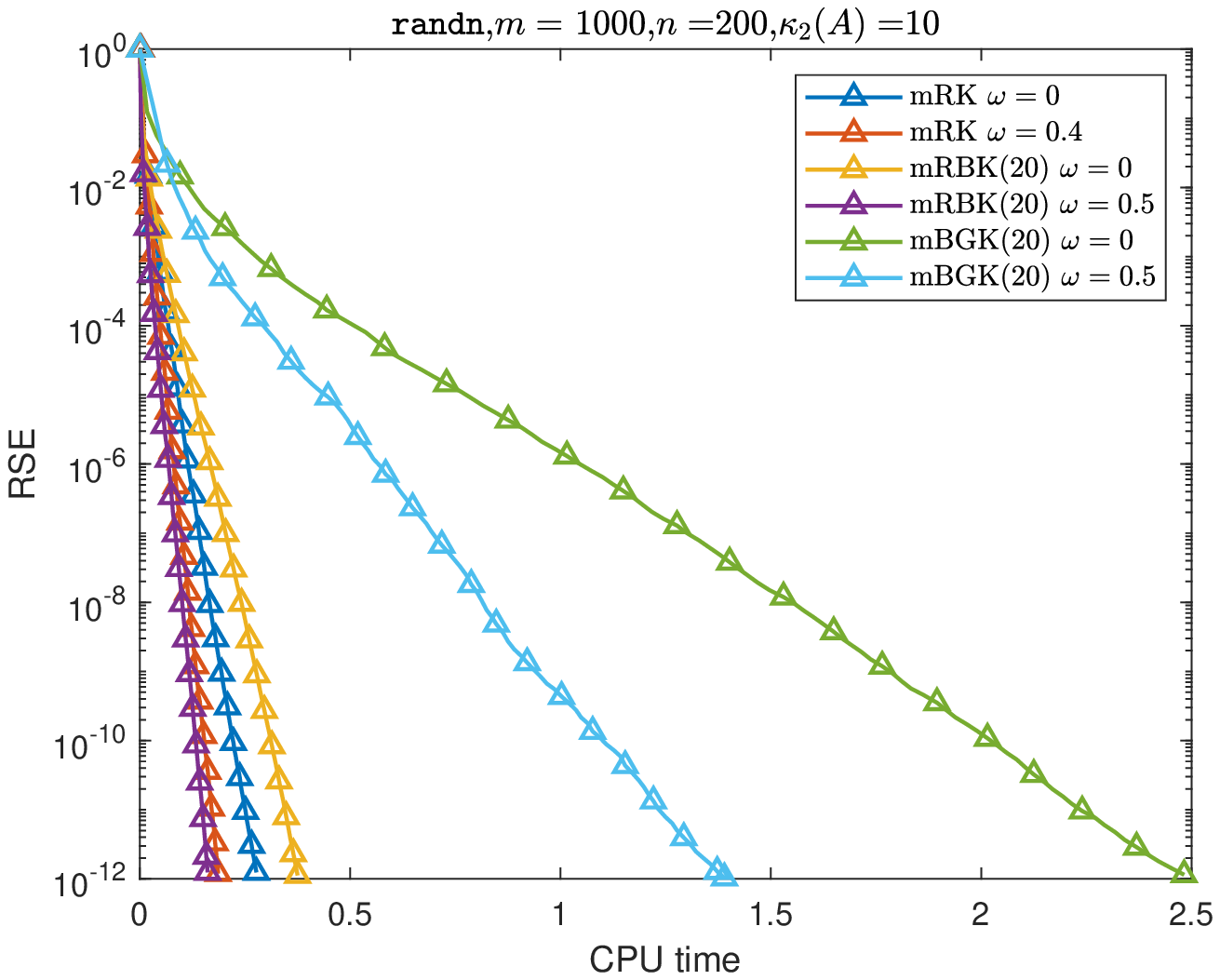}}
 \subfigure[]{\includegraphics[width=0.48\linewidth]{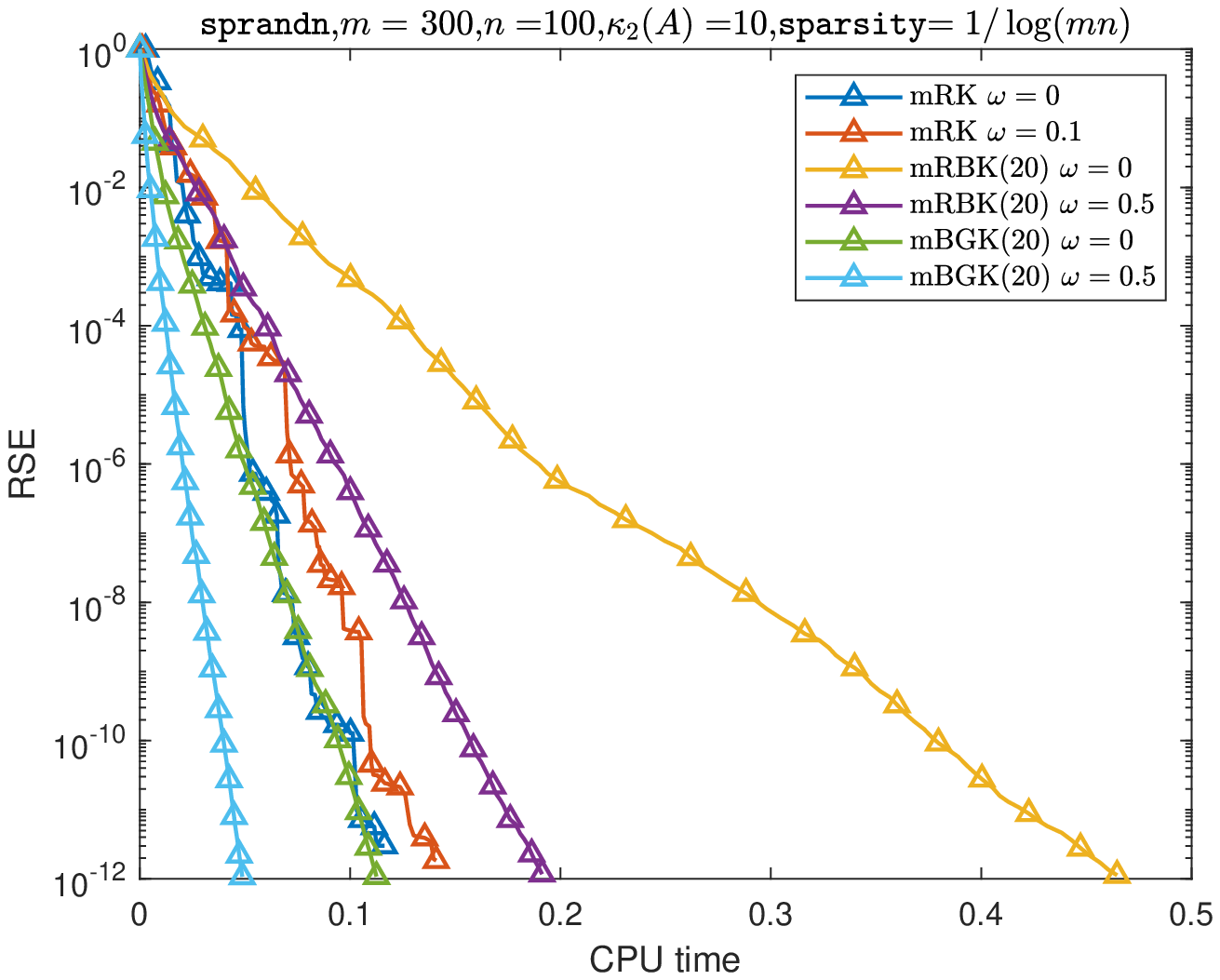}}
 \end{tabular}
 \caption{Performance of mRK, mRBK, and mBGK for consistent linear systems with Gaussian matrix (a) and sparse random matrix (b). The title of each plot indicates the test matrix,  the dimension of the matrix $A$, and the value of condition number $\kappa_2(A)$. We set $p=20$ for both mRBK and mBGK.}
	\label{figue724-1}
 \end{figure}

\begin{figure}[hptb]
 \centering
 \begin{tabular}{cc}
 \subfigure[]{\includegraphics[width=0.48\linewidth]{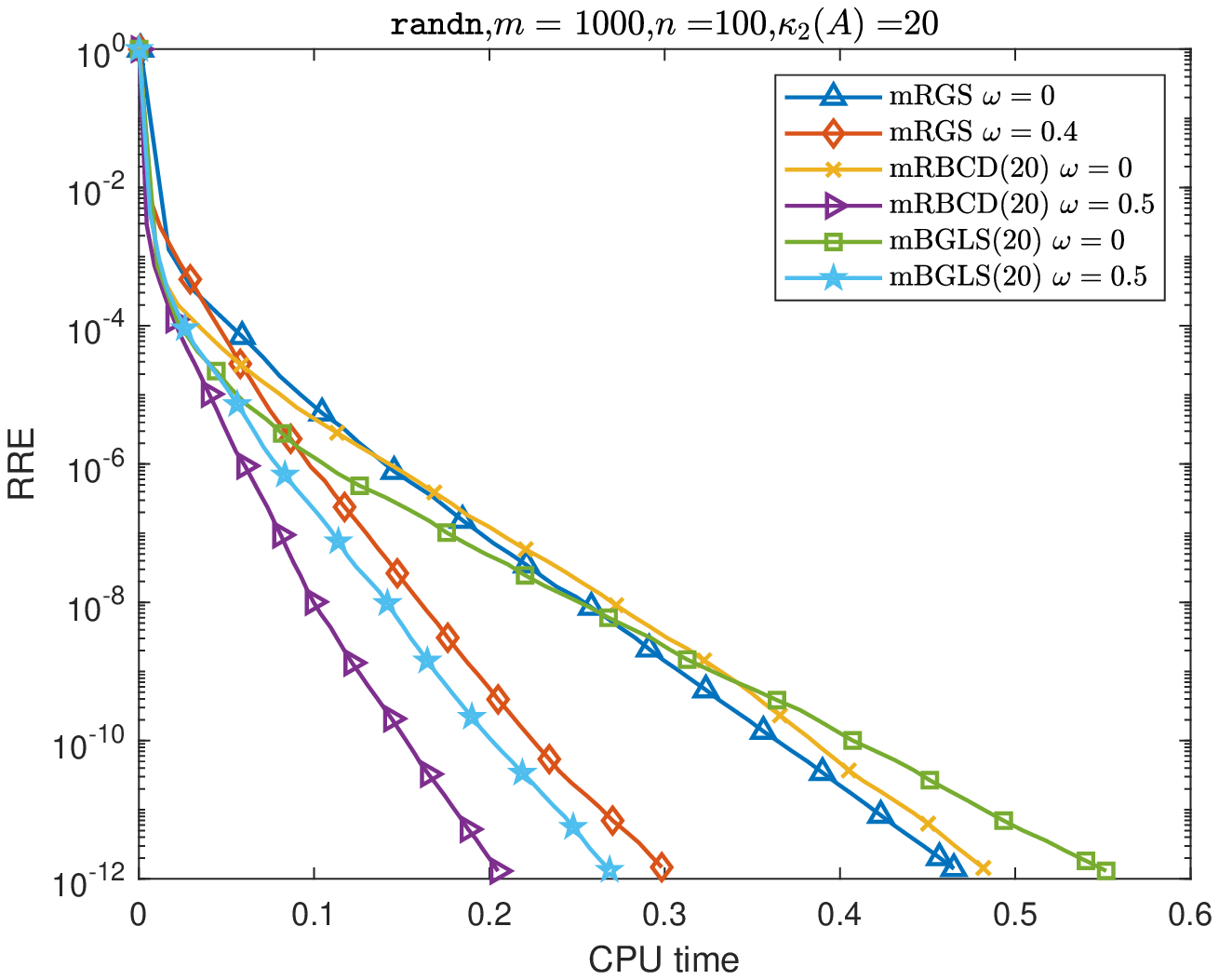}}
 \subfigure[]{\includegraphics[width=0.48\linewidth]{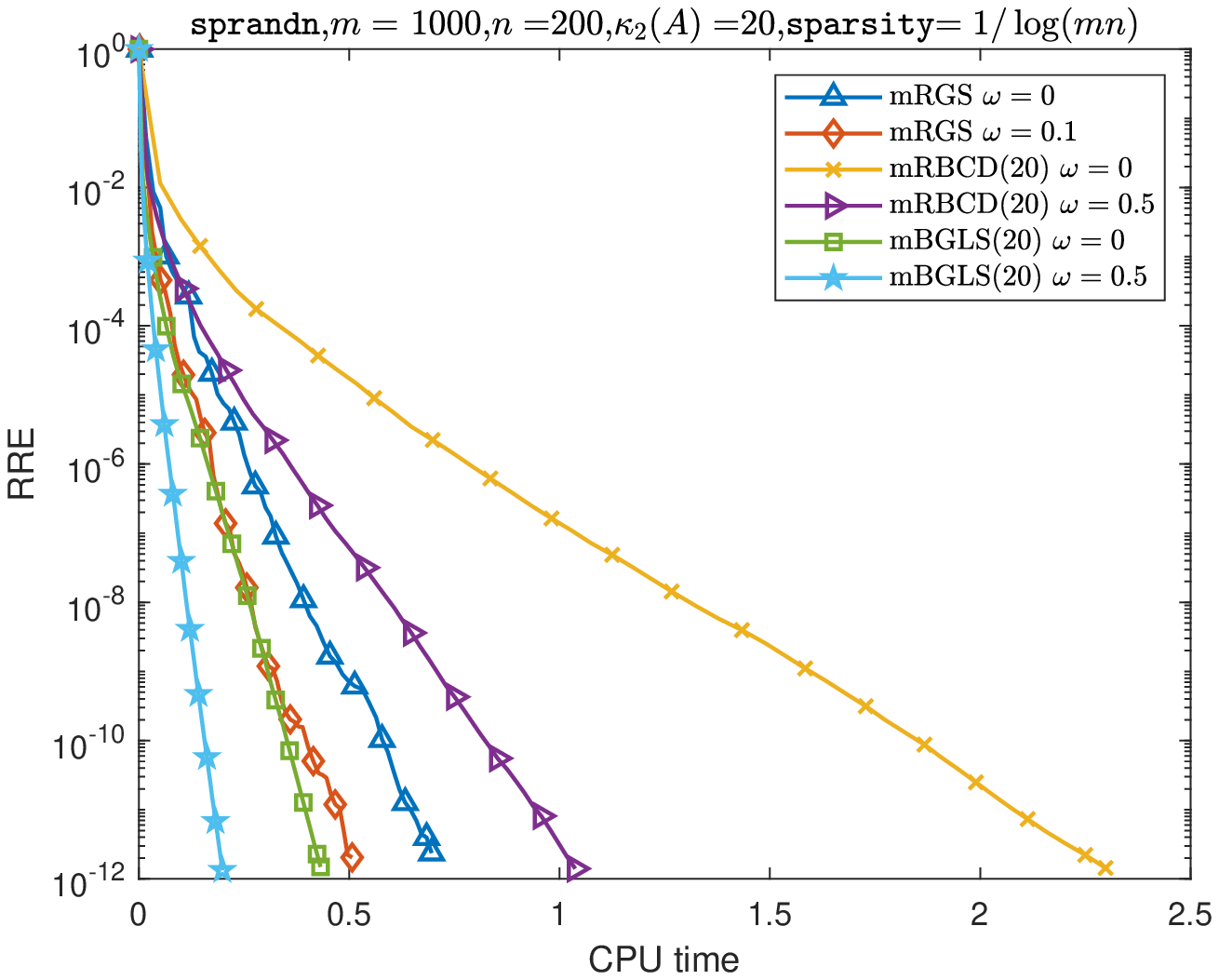}}
 \end{tabular}
 \caption{Performance of mRGS, mRBCD, and mBGLS  for inconsistent linear systems with Gaussian matrix (left) and sparse random matrix (right). The title of each plot indicates the test matrix,  the dimension of the matrix $A$, the value of condition number $\kappa_2(A)$, and the density of $A$. We set $s=20$ for both mRBCD and mBGLS.}
	\label{figue724-3}
 \end{figure}

\subsection{Faster method for average consensus}
Average consensus is a central problem in distributed computing and multi-agent systems \cite{boyd2006randomized,loizou2021revisiting}. It is raised in many applied areas, such as PageRank, coordination of autonomous agents, and rumor spreading in social networks.
In the average consensus (AC) problem, we are given an undirected connected network $G=(V, E)$ with node set $V=\{1,2, \ldots, n\}$ and edge set $E \ (|E|=m)$.
Each node $i \in V$ ``knows'' a private value $c_{i} \in \mathbb{R}$. The goal of the AC problem is for every node to compute the average of these private values, $\bar{c}:=\frac{1}{n} \sum_{i} c_{i}$,  in a
distributed fashion. That is, the exchange of information can only occur between connected nodes (neighbours).
Recently, under an appropriate setting, the famous randomized pairwise gossip algorithm \cite{boyd2006randomized} for solving the AC problem has been proved to be equivalent to the RK method. One may refer to \cite{loizou2016new,loizou2021revisiting} for more details.

In our test, the linear system is the homogeneous linear system ($Ax =
0$) with matrix $A \in\mathbb{R}^{m\times n}$ being the incidence matrix of the undirected graph.
The initial values of the nodes are chosen arbitrarily, i.e. using the {\sc Matlab} function {\tt rand(n,1)}, and the algorithms will find the average of these values.
We note that the incidence matrix $A$ is \emph{rank deficient}. More specifically, it can be shown that $\mbox{rank}(A)=n-1$ \cite{loizou2016new,loizou2021revisiting}.  In comparing the methods, the initial vector is chosen as $x^0=c$ or $x^1=x^0=c$.
The computations are terminated once RSE  is less than $10^{-12}$.

We use three  popular graph topologies in the literature on wireless sensor networks: the line graph, the cycle graph, and the $2$-dimensional random geometric graph $G(n, p)$. In practice, $G(n, p)$ is considered ideal to model wireless sensor networks because of their particular formulation. In the experiments the $2$-dimensional $G(n, p)$ is formed by placing $n$ nodes uniformly at random in a unit square where there are only edges between nodes whose  Euclidean distance is within the given radius $p$.
To preserve the connectivity of $G(n, p)$, a radius $p=p(n)=\log (n) / n$ is recommended \cite{penrose2003random}.

Table \ref{table2}  summarizes the results of the experiment, where we use mRK, mRBK, and mBGK for solving the consistent linear system $Ax=0$. It is clear that the introduction of the momentum term improves
the performance of the methods.
It is shown that mRBK ($\omega=0.5$) is more efficient  than other methods.

\begin{landscape}
{\tiny
\begin{longtable}{  c c c c c c c c c c c c c c }
\caption{Performance of mRK, mRBK, and mBGK  for the AC problem. We set $p=20$ for both mRBK and mBGK. All results are averaged over 10 trials. The label ``--'' in the table
means that the algorithms fail to find the solution within 200 seconds.}
\label{table2}\\
\hline
\multirow{2}{*}{ Size }& \multirow{2}{*}{ Graph }  &\multicolumn{2}{c }{ mRK}  &\multicolumn{2}{c }{mRK}  &\multicolumn{2}{c }{mRBK(20)} &\multicolumn{2}{c }{mRBK(20)} &\multicolumn{2}{c }{mGBK(20)}  &\multicolumn{2}{c }{mGBK(20)}
\\
   & &\multicolumn{2}{c }{$\omega=0$}  &\multicolumn{2}{c }{$\omega=0.5$}  &\multicolumn{2}{c }{$\omega=0$} &\multicolumn{2}{c }{$\omega=0.5$} &\multicolumn{2}{c }{$\omega=0$}  &\multicolumn{2}{c }{$\omega=0.5$}
\\
\hline
\hbox{$n$} & &  Iter & CPU    & Iter & CPU    & Iter & CPU   & Iter & CPU    & Iter & CPU   & Iter & CPU   \\
\hline

\hline
\\
$    100$ & Cycle &   5.94e+05  &   6.6913  & 3.56e+05  &   3.8582 & 3.55e+04  &   0.5535 & 1.77e+04  &   {\bf 0.2723} & 4.22e+04  &   0.9673 & 2.12e+04  &   0.4851 \\[0.1cm]
$    100$  &Line graph  &  2.18e+06  &  26.2444  & 1.33e+06  &  14.7973 & 1.31e+05  &   2.0442 & 6.26e+04  &   {\bf 1.0166} & 1.56e+05  &   3.7249 & 7.82e+04  &   1.8621 \\[0.1cm]
$    100$ & RGG  &  4.23e+04  &   0.6044  & 2.59e+04  &   0.3581 & 2.79e+03  &   0.0576 & 1.39e+03  &   {\bf 0.0296} & 2.96e+03  &   0.3454 & 1.46e+03  &   0.1701 \\[0.1cm]
$    200$  &Cycle  & 4.61e+06  &  67.1620  & 2.72e+06  &  38.3765 & 2.48e+05  &   4.1216 & 1.23e+05  &   {\bf 2.0790} & 2.74e+05  &  11.6263 & 1.37e+05  &   5.8053 \\[0.1cm]
$    200$  &Line graph  &  -- &  -- &  1.03e+07  & 155.7895 & 9.55e+05  &  16.1365 & 4.77e+05  &   {\bf 8.0497} & 1.06e+06  &  44.9593 & 5.28e+05  &  23.3718 \\[0.1cm]
$    200$ & RGG  &  9.42e+04  &   2.0505 &  5.82e+04  &   1.1823 & 5.28e+03  &   0.2039 & 2.67e+03  &   {\bf 0.0939} & 5.50e+03  &   2.8123 & 2.81e+03  &   1.4595 \\[0.1cm]
$    300$  &Cycle  &  --  &  --  & 8.90e+06  & 157.4100 & 8.07e+05  &  15.1372 & 4.04e+05  &  {\bf 7.6067} & 8.65e+05  &  54.2075 & 4.32e+05  &  27.6358 \\[0.1cm]
$    300$ & Line graph  & -- &  --  & --  &  -- & 2.93e+06  &  54.9336 & 1.48e+06  &  {\bf 28.1943} & 3.11e+06  & 194.9407 & 1.53e+06  &  95.3693 \\[0.1cm]
$    300$ & RGG  &  2.19e+05  &   6.2552  & 1.36e+05  &   3.8626 & 1.20e+04  &   0.5547 & 6.25e+03  &   {\bf 0.2608} & 1.25e+04  &  10.8691 & 6.43e+03  &   5.6038 \\[0.1cm]
$    400$  &Cycle  &  --  &   -- &  --  &   -- & 1.71e+06  &  39.6690 & 8.57e+05  &  {\bf 21.7547} & 1.80e+06  & 174.3996 & 8.92e+05  &  73.7291 \\[0.1cm]
$    400$ & Line graph &   --  &   -- &  --  &   -- & --  &-- & 3.72e+06  &  {\bf 94.9543} & --  &   --& --  & -- \\[0.1cm]
$    400$ & RGG  &  3.50e+05  &  12.1921 &  2.08e+05  &   7.0579 & 1.88e+04  &   1.0795 & 9.81e+03  &   {\bf 0.4774} & 1.91e+04  &  18.1648 & 9.55e+03  &   9.2028 \\[0.1cm]
$    500$  &Cycle  &  --  &   -- &  --  &  -- &3.69e+06  & 119.9571 & 1.85e+06  &  {\bf 61.3696} & --  & -- & -- & -- \\[0.1cm]
$    500$  &RGG  &  3.75e+05  &  15.7980 &  2.25e+05  &   9.7337 & 1.91e+04  &   1.3119 & 1.07e+04  &   {\bf 0.6695} & 1.94e+04  &  27.2015 & 9.63e+03  &  14.6479 \\[0.1cm]
\hline
\end{longtable}
}
\end{landscape}

\section{Concluding remarks}

In this work, we have proposed a generic  pseudoinverse-free randomized method for solving different  types of linear systems. Our method is formulated with general randomized sampling matrices as well as enriched with Polyak's heavy ball momentum. We proved the global convergence rates of the method as well as an accelerated linear rate for the case of the norm of expected iterates.
Our general framework can lead to the improvement of several existing algorithms, and can produce new algorithms.
Numerical tests reveal that the new methods based on Gaussian sampling are competitive on sparse problems, as compared to mRK, mRBK, mRGS, and mRBCD.

We believe that this work could open up several future avenues for research. In \cite{Zou12}, Zouzias and Freris studied the randomized extended Kaczmarz for solving least squares. They proved that REK converges to a solution of $Ax = AA^{\dagger}b$. We note that any solution of $Ax = AA^{\dagger}b$ is a solution of $Ax = b$ if it is
consistent or a least squares solution of $Ax = b$ if it is inconsistent. While in Theorem \ref{THMr-in629}, our convergence result is about the iterative residuals, and this is actually kind of hard to know which solution our method would converge to. It is a valuable topic to study the extended pseudoinverse-free methods with better convergence results. The randomized sampling matrices in our work need to satisfy Assumption \ref{assumption1},
it is convenient to use more general sampling matrices  for obtaining new classes of randomized algorithms.

\bibliographystyle{abbrv}
\bibliography{main0823}

\section{Appendix. Proof of the main results}

The following lemma is crucial in our proof.
\begin{lemma}
\label{lemma-key629}
Fix $F^1=F^0\geq 0,\zeta\geq0$, and let $\{F^k\}_{k\geq 0}$ be a sequence of nonnegative real numbers satisfying the relation
\begin{equation}\label{lemma6291}
F^{k+1}\leq \gamma_1 F^k+\gamma_2F^{k-1}+\zeta,\ \ \forall \ k\geq 1,
\end{equation}
where $\gamma_2\geq0,\gamma_1+\gamma_2<1$. Then the sequence satisfies the relation
$$F^{k+1}\leq q^k(1+\tau)F^0+\frac{1-q^k}{1-q}\zeta,\ \ \forall \ k\geq 0,$$
where $$q=\left\{
           \begin{array}{ll}
             \frac{\gamma_1+\sqrt{\gamma_1^2+4\gamma_2}}{2}, & \hbox{if $\gamma_2>0$;} \\
             \gamma_1, & \hbox{if $\gamma_2=0$,}
           \end{array}
         \right. \  \mbox{and} \ \tau=q-\gamma_1\geq 0.$$ Moreover,
$
\gamma_1+\gamma_2\leq q<1,
$
with equality if and only if $\gamma_2=0$.
\end{lemma}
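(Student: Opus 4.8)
The plan is to recognize $q$ as the larger root of the characteristic quadratic associated with the two-step recursion \eqref{lemma6291}. Setting aside the degenerate case for a moment, $q=\frac{\gamma_1+\sqrt{\gamma_1^2+4\gamma_2}}{2}$ is precisely the positive solution of $t^2-\gamma_1 t-\gamma_2=0$, so that $q^2=\gamma_1 q+\gamma_2$. With $\tau:=q-\gamma_1$ this single identity encodes the two relations $\gamma_1=q-\tau$ and $\gamma_2=q\tau$, which drive the whole argument (and they remain valid in the case $\gamma_2=0$, where $q=\gamma_1$ and $\tau=0$). First I would record the elementary facts about $q$ and $\tau$. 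Since $\gamma_2\ge 0$ we have $\sqrt{\gamma_1^2+4\gamma_2}\ge|\gamma_1|\ge\gamma_1$, whence $q\ge\gamma_1$ and $\tau\ge0$. For $q<1$ I would note that $\gamma_1+\gamma_2<1$ and $\gamma_2\ge0$ force $\gamma_1<1<2$, so $2-\gamma_1>0$ and the inequality $\sqrt{\gamma_1^2+4\gamma_2}<2-\gamma_1$ may be squared, reducing exactly to $\gamma_2<1-\gamma_1$. Finally $\gamma_1+\gamma_2\le q$ follows from $\gamma_2=q\tau\le\tau$ (using $0\le q<1$ and $\tau\ge0$), so that $\gamma_1+\gamma_2\le\gamma_1+\tau=q$, with equality exactly when $\tau=0$, i.e. when $\gamma_2=0$.

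The crux of the proof is to linearize the second-order inequality into a first-order one via the auxiliary sequence $H^k:=F^k+\tau F^{k-1}$, $k\ge1$. Using $\gamma_1=q-\tau$ and $\gamma_2=q\tau$, the hypothesis \eqref{lemma6291} rewrites as
$$
F^{k+1}\le(q-\tau)F^k+q\tau F^{k-1}+\zeta,
$$
and adding $\tau F^k$ to both sides gives exactly
$$
H^{k+1}=F^{k+1}+\tau F^k\le q\big(F^k+\tau F^{k-1}\big)+\zeta=qH^k+\zeta,\qquad k\ge1.
$$
This is the decisive step: identifying the correct coefficient $\tau$ is what collapses the two-term recursion into a contraction, and I expect it to be the only genuinely non-routine part of the argument.

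Having obtained $H^{k+1}\le qH^k+\zeta$, I would unroll the geometric recursion to get $H^{k+1}\le q^kH^1+\zeta\sum_{j=0}^{k-1}q^j=q^kH^1+\frac{1-q^k}{1-q}\zeta$ for $k\ge1$. The initial term is computed from $F^1=F^0$ as $H^1=F^1+\tau F^0=(1+\tau)F^0$. Since $\tau\ge0$ and every $F^k\ge0$, we have $F^{k+1}\le F^{k+1}+\tau F^k=H^{k+1}$, which yields
$$
F^{k+1}\le q^k(1+\tau)F^0+\frac{1-q^k}{1-q}\zeta
$$
for all $k\ge1$. The remaining case $k=0$ is immediate: the right-hand side equals $(1+\tau)F^0$, while $F^1=F^0\le(1+\tau)F^0$. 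Together with the facts on $q$ and $\tau$ from the first step, this establishes the claim and the assertion $\gamma_1+\gamma_2\le q<1$ with equality if and only if $\gamma_2=0$.
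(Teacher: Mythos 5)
Your proof is correct and follows essentially the same route as the paper's: using $q^2=\gamma_1 q+\gamma_2$ to write $\gamma_1=q-\tau$ and $\gamma_2=q\tau$, adding $\tau F^k$ to collapse the two-step recursion into $H^{k+1}\le qH^k+\zeta$, and unrolling. You are in fact a bit more careful than the paper in two small places (verifying $q<1$ by squaring $\sqrt{\gamma_1^2+4\gamma_2}<2-\gamma_1$, and keeping the telescoping step as an inequality rather than an equality), but the argument is the same.
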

\begin{proof}
We first show that $\tau \geq 0$ and $\gamma_{2} =\left(\gamma_{1}+\tau\right) \tau$. Indeed, since
$$\tau=q-\gamma_1=\left\{
           \begin{array}{ll}
             \frac{-\gamma_{1}+\sqrt{\gamma_{1}^{2}+4 \gamma_{2}}}{2}, & \hbox{if $\gamma_2>0$;} \\
             0, & \hbox{if $\gamma_2=0$,}
           \end{array}
         \right.$$
the nonnegativity of $\tau$ can be derived from $\gamma_2\geq0$. It is easy to verify that
$\tau$ satisfies
\begin{equation}\label{lemma6293}
\left(\gamma_{1}+\tau\right) \tau=\gamma_{2}.
\end{equation}
By \eqref{lemma6293} and adding $\tau F_{k}$ to both sides of \eqref{lemma6291}, we get
$$\begin{array}{ll}
F^{k+1}&\leq F^{k+1}+\tau F^{k}=\left(\gamma_{1}+\tau\right) F^{k}+\gamma_{2} F^{k-1}+\zeta \\
&=\left(\gamma_{1}+\tau\right)\left(F^{k}+\tau F^{k-1}\right)+\zeta
=q\left(F^{k}+\tau F^{k-1}\right)+\zeta\\
&=q^{k}\left(F^{1}+\tau F^{0}\right)+\zeta\sum\limits_{i=0}^{k-1} q^i
=q^{k}(1+\tau) F^{0}+\frac{1-q^k}{1-q}\zeta,
\end{array}$$
where the inequality follows from $\tau\geq0$.

Finally, let us establish $\gamma_1+\gamma_2\leq q < 1$. The inequality $q<1$ follows directly from the assumption $\gamma_{1}+\gamma_{2}<1$.
Noting that $\gamma_{1}=q-\tau$, and since in view of \eqref{lemma6293} we have $\gamma_{2}=q \tau$, we conclude that $\gamma_{1}+\gamma_{2}=q+\tau(q-1) \leq q$, where the inequality follows from $q<1$ and $\tau\geq0$.
\end{proof}

\subsection{Proof of Theorems \ref{THMr-in} and \ref{THMr-in629}}
In fact, Theorem \ref{THMr-in} can be directly derived from Theorem \ref{THMr-in629} by letting $\omega=0$. We include a simple proof of Theorem \ref{THMr-in} here for ease of reading.

\begin{proof}[Proof of Theorem \ref{THMr-in}]
Straightforward calculations yield
$$\begin{array}{ll}
\|r^{k+1}-r^*\|^2_2&=\|Ax^{k+1}-b-r^*\|^2_2
\\
&=\|Ax^k-b-\alpha AT_1T_2^\top A^\top S_1S_2^\top(Ax^{k}-b)-r^*\|^2_2
\\
&=\|r^k-r^*-\alpha AT_1T_2^\top A^\top S_1S_2^\top r^{k}\|^2_2
\\
&=\|r^k-r^*\|^2_2-2\alpha\langle r^k-r^*, AT_1T_2^\top A^\top S_1S_2^\top r^{k}\rangle +\alpha^2\| AT_1T_2^\top A^\top S_1S_2^\top r^{k}\|^2_2.
\end{array}
$$
Noting that $\mathbb{E}[T_1T_2^\top A^\top S_1S_2^\top]=\frac{A^\top}{\|A\|^2_F}$ and $A^\top r^*=0$, we have
\begin{equation}\label{proof-THM-5}
\begin{array}{ll}
\mathbb{E}[\|r^{k+1}-r^*\|^2_2|x^k]
&=\|r^k-r^*\|^2_2-2\alpha\big\langle r^k-r^*, \mathop{\mathbb{E}}[AT_1T_2^\top A^\top S_1S_2^\top] r^{k}\big\rangle
\\
&\ \ \ +\alpha^2\mathbb{E}[\| AT_1T_2^\top A^\top S_1S_2^\top r^{k}\|^2_2]\\
&=\|r^k-r^*\|^2_2-2\frac{\alpha}{\|A\|^2_F} \langle r^k-r^*,A A^\top r^k\rangle
 +\alpha^2\mathbb{E}[\| AT_1T_2^\top A^\top S_1S_2^\top r^{k}\|^2_2]\\
&=\|r^k-r^*\|^2_2-2\frac{\alpha}{\|A\|^2_F}  \langle r^k-r^*,A A^\top (r^k-r^*)\rangle
 \\
 &\ \ \ +\alpha^2\mathbb{E}[\| AT_1T_2^\top A^\top S_1S_2^\top r^{k}\|^2_2].
\end{array}
\end{equation}
Since $r^k-r^*\in \mbox{Range}(A)$, we know that
\begin{equation}\label{proof-THM-4}
\langle r^k-r^*, AA^\top (r^k-r^*)\rangle\geq\sigma^2_{\min}(A)\|r^k-r^*\|^2_2.
\end{equation}
Using the fact that $\|a-b\|^2_2\leq 2\|a\|^2_2+2\|b\|^2_2$, we have
\begin{equation}
\label{proof-THM-2}
\begin{array}{ll}
\| AT_1T_2^\top A^\top S_1S_2^\top r^{k}\|^2_2&=\| AT_1T_2^\top A^\top S_1S_2^\top (r^{k}-r^*+r^*)\|^2_2\\
&\leq 2\| AT_1T_2^\top A^\top S_1S_2^\top(r^{k}-r^*)\|_2^2+2\| AT_1T_2^\top A^\top S_1S_2^\top r^*\|_2^2.
\end{array}
\end{equation}
Note that
\begin{equation}
\label{proof-THM-3}
\begin{array}{ll}
&\mathbb{E}[\| AT_1T_2^\top A^\top S_1S_2^\top (r^{k}-r^*)\|_2^2]\\
&=(r^k-r^*)^\top\mathbb{E}\big[ ( AT_1T_2^\top A^\top S_1S_2^\top)^\top AT_1T_2^\top A^\top S_1S_2^\top \big](r^k-r^*)\\
&\leq \beta\|r^k-r^*\|^2_2
\end{array}
\end{equation}
and similarly,
\begin{equation}
\label{proof-THM-6}
\mathbb{E}[\| AT_1T_2^\top A^\top S_1S_2^\top r^*\|_2^2]\leq \beta\|r^*\|^2_2.
\end{equation}
Combining  \eqref{proof-THM-5}, \eqref{proof-THM-4}, \eqref{proof-THM-2}, \eqref{proof-THM-3} and \eqref{proof-THM-6}  yields
$$
\begin{array}{ll}
\mathbb{E}[\|r^{k+1}-r^*\|^2_2|x^k]
&\leq\|r^k-r^*\|^2_2-2\alpha\frac{\sigma^2_{\min}(A)}{\|A\|^2_F}\|r^k-r^*\|^2_2\\
&\ \ \ +2\alpha^2\beta \|r^k-r^*\|^2_2+2\alpha^2\beta\|r^*\|^2_2\\
&=\eta \|r^k-r^*\|^2_2+2\alpha^2\beta\|r^*\|^2_2.
\end{array}
$$
Taking the expectation over the entire history,
we have
$$
\begin{array}{ll}
\mathbb{E}[\|r^{k}-r^*\|^2_2]
&=\eta \mathbb{E}[\|r^{k-1}-r^*\|^2_2]+2\alpha^2\beta\|r^*\|^2_2
\\
&\leq \eta^{k} \|r^0-r^*\|^2_2+2\alpha^2\beta\|r^*\|^2_2\sum\limits_{i=0}^{k-1}\eta^i\\
&=\eta^{k} \|r^0-r^*\|^2_2+\frac{2\alpha^2\beta(1-\eta^k)\|r^*\|^2_2}{1-\eta}
\end{array}
$$
as desired.
\end{proof}

Now we will prove Theorem \ref{THMr-in629}.

\begin{proof}[Proof of Theorem \ref{THMr-in629}]
Straightforward calculations yield
\begin{equation}\label{proof-THM-6291}
\begin{array}{ll}
\|r^{k+1}-r^*\|^2_2&=\|Ax^{k+1}-b-r^*\|^2_2
\\
&=\|Ax^k-b-\alpha AT_1T_2^\top A^\top S_1S_2^\top(Ax^{k}-b)+\omega(Ax^k-Ax^{k-1})-r^*\|^2_2
\\
&=\|r^k-r^*-\alpha AT_1T_2^\top A^\top S_1S_2^\top r^{k}+\omega(r^k-r^{k-1})\|^2_2
\\
&=\underbrace{\|r^k-r^*-\alpha AT_1T_2^\top A^\top S_1S_2^\top r^{k}\|^2_2}_{\textcircled{a}}\\
&\ \ \ +\underbrace{2\omega\langle r^k-r^*-\alpha AT_1T_2^\top A^\top S_1S_2^\top r^{k},  r^{k}-r^{k-1}\rangle }_{\textcircled{b}} +\underbrace{\omega^2\|r^{k}-r^{k-1}\|^2_2}_{\textcircled{c}}.
\end{array}
\end{equation}
We will now analyze the three expressions $\textcircled{a},\textcircled{b},\textcircled{c}$ separately. The first expression can be written as
$$
\textcircled{a}=\|r^k-r^*\|^2_2-2\alpha\langle r^k-r^*, AT_1T_2^\top A^\top S_1S_2^\top r^{k}\rangle +\alpha^2\| AT_1T_2^\top A^\top S_1S_2^\top r^{k}\|^2_2.
$$
We will now bound the second expression. First, we have
$$
\begin{array}{ll}
\textcircled{b}&=2\omega\langle r^k-r^*,r^k-r^{k-1}\rangle-2\omega\alpha\langle AT_1T_2^\top A^\top S_1S_2^\top r^{k},r^k-r^{k-1}\rangle\\
&=2\omega\langle r^k-r^*,r^k-r^*\rangle+2\omega\langle r^k-r^*,r^*-r^{k-1}\rangle-2\omega\alpha\langle AT_1T_2^\top A^\top S_1S_2^\top r^{k},r^k-r^{k-1}\rangle.
\end{array}
$$
Noting that
$$
2\langle r^{k}-r^*,r^*-r^{k-1} \rangle\leq \|r^k-r^*\|^2_2+\|r^{k-1}-r^*\|^2_2,
$$
which implies
$$
\textcircled{b}\leq 3\omega\|r^k-r^*\|^2_2+\omega\|r^{k-1}-r^*\|^2_2-2\omega\alpha\langle AT_1T_2^\top A^\top S_1S_2^\top r^{k},r^k-r^{k-1}\rangle.
$$
The third expression can be bounded as
$$\textcircled{c}\leq2\omega^2 \|r^{k}-r^*\|^2_2+2\omega^2\|r^{k-1}-r^*\|^2_2.$$
By substituting all the bounds  into \eqref{proof-THM-6291}, we obtain
$$
\begin{array}{ll}
\|r^{k+1}-r^*\|^2_2&\leq(1+3\omega+2\omega^2)\|r^k-r^*\|^2_2+(\omega+2\omega^2)\|r^{k-1}-r^*\|^2_2
\\
&\ \ \ -2\alpha\langle r^k-r^*, AT_1T_2^\top A^\top S_1S_2^\top r^{k}\rangle +\alpha^2\| AT_1T_2^\top A^\top S_1S_2^\top r^{k}\|^2_2
\\
&\ \ \ -2\omega\alpha\langle AT_1T_2^\top A^\top S_1S_2^\top r^{k},r^k-r^{k-1}\rangle.
\end{array}
$$
Now taking the conditional expectation and noting that $\mathbb{E}[T_1T_2^\top A^\top S_1S_2^\top]=\frac{A^\top}{\|A\|^2_F}$ and $A^\top r^*=0$, we have
\begin{equation}\label{prf-main-71-1}
\begin{array}{ll}
\mathbb{E}\left[\|r^{k+1}-r^*\|^2_2|x^k\right]&\leq(1+3\omega+2\omega^2)\|r^k-r^*\|^2_2+(\omega+2\omega^2)\|r^{k-1}-r^*\|^2_2
\\
&\ \ \ -2\alpha\langle r^k-r^*, \mathbb{E}[AT_1T_2^\top A^\top S_1S_2^\top] r^{k}\rangle +\alpha^2\mathbb{E}[\| AT_1T_2^\top A^\top S_1S_2^\top r^{k}\|^2_2]
\\
&\ \ \ -2\omega\alpha\langle \mathbb{E}[AT_1T_2^\top A^\top S_1S_2^\top] r^{k},r^k-r^{k-1}\rangle\\
&=(1+3\omega+2\omega^2)\|r^k-r^*\|^2_2+(\omega+2\omega^2)\|r^{k-1}-r^*\|^2_2
\\
&\ \ \ -\frac{2\alpha}{\|A\|^2_F}\langle r^k-r^*, A A^\top( r^{k}-r^*)\rangle+\frac{2\omega\alpha}{\|A\|^2_F}\langle A A^\top (r^{k}-r^*),r^{k-1}-r^{k}\rangle
\\
&\ \ \ +\alpha^2\mathbb{E}[\| AT_1T_2^\top A^\top S_1S_2^\top r^{k}\|^2_2].
\end{array}
\end{equation}
Since $r^k-r^*\in \mbox{Range}(A)$, we know that
\begin{equation}\label{proof-THM-6294}
\langle r^k-r^*, AA^\top (r^k-r^*)\rangle\geq\sigma^2_{\min}(A)\|r^k-r^*\|^2_2.
\end{equation}
From  \eqref{proof-THM-2}, \eqref{proof-THM-3} and \eqref{proof-THM-6}, we have
\begin{equation}
\label{proof-THM-6292}
\mathbb{E}[\| AT_1T_2^\top A^\top S_1S_2^\top r^{k}\|^2_2]\leq 2\beta\|r^k-r^*\|^2_2+2\beta\|r^*\|^2_2
\end{equation}
Note that
\begin{equation}\label{prf-main-71-2}
\begin{array}{ll}
2\langle A A^\top (r^{k}-r^*),r^{k-1}-r^{k}\rangle&=2\langle A A^\top (r^{k}-r^*),(r^{k-1}-r^*)-(r^{k}-r^*)\rangle
\\
&\leq \|A^\top(r^{k-1}-r^*)\|_2^2-\|A^\top(r^{k}-r^*)\|_2^2
\\
&\leq \sigma_{\max}^2(A)\|r^{k-1}-r^*\|_2^2-\sigma_{\min}^2(A)\|r^{k}-r^*\|_2^2.
\end{array}
\end{equation}
Combining  \eqref{prf-main-71-1}, \eqref{proof-THM-6294}, \eqref{proof-THM-6292} and \eqref{prf-main-71-2}  yields
$$
\begin{array}{ll}
\mathbb{E}\left[\|r^{k+1}-r^*\|^2_2|x^k\right]&\leq\bigg(\underbrace{1+3\omega+2\omega^2+2\alpha^2\beta
-(2\alpha+\alpha\omega)\frac{\sigma_{\min}^2(A)}{\|A\|^2_F}}_{\gamma_1}\bigg)
\|r^k-r^*\|^2_2
\\
&\ \ \ +\bigg(\underbrace{\omega+2\omega^2+\omega\alpha\frac{\sigma^2_{\max}(A)}{\|A\|^2_F}}_{\gamma_2}\bigg)\|r^{k-1}-r^*\|^2_2
+\underbrace{2\beta\alpha^2\|r^*\|^2_2}_{\zeta}.
\end{array}
$$
By taking expectation over the entire history, and letting $F^k:=\mathbb{E}[\|r^k-r^*\|^2_2]$, we get the relation
\begin{equation}\label{prf-THMm10}
F^{k+1}\leq \gamma_1F^k+\gamma_2F^{k-1}+\zeta.
\end{equation}
Noting that the conditions of the Lemma \ref{lemma-key629} are satisfied. Indeed, $\gamma_2\geq0$, and if $\gamma_2=0$, then $\omega=0$.
The condition $\gamma_1+\gamma_2<1$ holds by assumption.
Then apply Lemma \ref{lemma-key629} to \eqref{prf-THMm10}, one can get the theorem.
\end{proof}

\subsection{Proof of Theorems \ref{THMnm2}, \ref{THMmain}, and \ref{THMfm2}}
Let us first present  two useful lemmas.
\begin{lemma}\label{lemma-62912}
Suppose that the singular value decomposition of $A=U\Sigma V^\top$ and let $\{x^k\}_{k=0}^{\infty}$ be the sequences generated by Algorithm \ref{mPDFR} or Algorithm \ref{PFR1} $(\omega=0)$. Then
$$
V^\top\mathbb{E}[x^{k+1}-x^0_*]=\bigg((1+\omega)I-\alpha\frac{\Sigma^\top\Sigma}{\|A\|^2_F}\bigg)
V^\top\mathbb{E}[x^k-x^0_*]-\omega V^\top\mathbb{E}[x^{k-1}-x^0_*].
$$
\end{lemma}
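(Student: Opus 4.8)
The plan is to reduce the stochastic iteration to a deterministic recursion for the mean error, and then diagonalize that recursion via the SVD of $A$. First I would take the conditional expectation of the mPFR update in Algorithm \ref{mPDFR} with respect to the random pair $(S_1,S_2,T_1,T_2)$, which is sampled independently of the history and hence of $x^k,x^{k-1}$. Invoking Assumption \ref{assumption1}, namely $\mathbb{E}[T_1T_2^\top A^\top S_1S_2^\top]=A^\top/\|A\|^2_F$, this gives
$$
\mathbb{E}[x^{k+1}\mid x^k,x^{k-1}]=x^{k}-\frac{\alpha}{\|A\|^2_F}A^\top(Ax^{k}-b)+\omega(x^k-x^{k-1}),
$$
i.e. the momentum variant of the Landweber iteration already noted after Algorithm \ref{mPDFR}. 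Taking the full expectation and using linearity (and the fact that $x^0_*$ is deterministic), I would write $d^k:=\mathbb{E}[x^k-x^0_*]=\mathbb{E}[x^k]-x^0_*$ and aim for a closed three-term recursion in $\{d^k\}$.

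The one genuinely nontrivial step, and what I expect to be the main obstacle, is showing that $x^0_*$ satisfies the normal equations, i.e. $A^\top(Ax^0_*-b)=0$. Granting this, one has $A^\top(A\mathbb{E}[x^k]-b)=A^\top A(\mathbb{E}[x^k]-x^0_*)=A^\top A\,d^k$, and the mean update collapses to
$$
d^{k+1}=\left((1+\omega)I-\frac{\alpha}{\|A\|^2_F}A^\top A\right)d^k-\omega\,d^{k-1}.
$$
To establish the normal-equation property I would use the definition $x^0_*=A^\dagger b+(I-A^\dagger A)x^0$ together with the pseudoinverse identities $A(I-A^\dagger A)=0$ (so that $Ax^0_*=AA^\dagger b$) and $A^\top AA^\dagger=A^\top$, the latter holding because $AA^\dagger$ is the symmetric orthogonal projector onto $\mbox{Range}(A)$, whence $A^\top=(AA^\dagger A)^\top=A^\top AA^\dagger$. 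Combining these yields $A^\top Ax^0_*=A^\top AA^\dagger b=A^\top b$, as required.

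Finally I would multiply the recursion on the left by $V^\top$. From $A=U\Sigma V^\top$ with $U^\top U=I$ and $V^\top V=I$ we have $A^\top A=V\Sigma^\top\Sigma V^\top$, hence $V^\top A^\top A=\Sigma^\top\Sigma\,V^\top$. This passes the matrix factor through $V^\top$ and converts the dense factor $A^\top A$ into the diagonal $\Sigma^\top\Sigma$, producing
$$
V^\top d^{k+1}=\left((1+\omega)I-\alpha\frac{\Sigma^\top\Sigma}{\|A\|^2_F}\right)V^\top d^k-\omega\,V^\top d^{k-1},
$$
which is precisely the claimed identity once we recall $d^k=\mathbb{E}[x^k-x^0_*]$. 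The remaining verifications are purely routine linearity arguments, so the crux is the pseudoinverse computation identifying $x^0_*$ as a solution of the normal equations.
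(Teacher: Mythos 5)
Your proposal is correct and follows essentially the same route as the paper: take the conditional expectation using Assumption \ref{assumption1}, use $A^\top A x^0_*=A^\top b$ to close the three-term recursion for $\mathbb{E}[x^k-x^0_*]$, and then conjugate by $V^\top$ via $A^\top A=V\Sigma^\top\Sigma V^\top$. The only difference is that you explicitly verify the normal-equation identity $A^\top A x^0_*=A^\top b$ from the pseudoinverse properties, whereas the paper invokes it directly (having noted in Section 2 that $x^0_*$ is the projection of $x^0$ onto $\{x: A^\top Ax=A^\top b\}$); your verification is a correct and harmless addition.
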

\begin{proof}
First, we have
$$
\begin{array}{ll}
x^{k+1}-x^0_*&=x^k-\alpha T_1T_2A^\top S_1S_2^\top(Ax^k-b)+\omega(x^k-x^{k-1})-x_*^0
\\
&=(1+\omega)\left(x^k-x_*^0\right)-\alpha T_1T_2A^\top S_1S_2^\top(Ax^k-b)-\omega(x^{k-1}-x_*^0).
\end{array}
$$
Taking expectations and noting that $A^\top Ax_*^0=A^\top b$, we have
$$
\begin{array}{ll}
\mathbb{E}\big[x^{k+1}-x_*^0|x^k\big]&=(1+\omega)\left(x^k-x_*^0\right)-\alpha \mathbb{E}[T_1T_2A^\top S_1S_2^\top](Ax^k-b)-\omega(x^{k-1}-x_*^0)\\
&= (1+\omega)\left(x^k-x_*^0\right)-\alpha\frac{A^\top A}{\|A\|^2_F}\left(x^k-x_*^0\right)-\omega(x^{k-1}-x_*^0)\\
&=\bigg((1+\omega)I-\alpha\frac{A^\top A}{\|A\|^2_F}\bigg)(x^k-x_*^0)-\omega(x^{k-1}-x_*^0).
\end{array}
$$
Taking the expectations again, we have
\begin{equation}\label{lemmaeq-613}
\mathbb{E}[x^{k+1}-x_*^0]=\bigg((1+\omega)I-\alpha\frac{A^\top A}{\|A\|^2_F}\bigg)\mathbb{E}[x^k-x_*^0]-\omega\mathbb{E}[x^{k-1}-x_*^0].
\end{equation}
Plugging $A^\top A=V\Sigma^\top\Sigma V^\top$ into \eqref{lemmaeq-613}, and multiplying both sides form the left by $V^\top$, we can get the lemma.
\end{proof}

\begin{lemma}[\cite{fillmore1968linear,elaydi1996introduction}]\label{Lemma-relation}
Consider the second degree linear homogeneous recurrence relation:
$$
r^{k+1}=\gamma_{1} r^{k}+\gamma_{2} r^{k-1}
$$
with initial conditions $r^{0}, r^{1} \in \mathbb{R}$. Assume that the constant coefficients $\gamma_{1}$ and $\gamma_{2}$ satisfy the inequality $\gamma_{1}^{2}+4 \gamma_{2}<0$ (the roots of the characteristic equation $t^{2}-\gamma_{1} t-$ $\gamma_{2}=0$ are imaginary). Then there are complex constants $c_{0}$ and $c_{1}$ (depending on the initial conditions $r_{0}$ and $r_{1}$) such that:
$$
r^{k}=2 M^{k}\left(c_{0} \cos (\theta k)+c_{1} \sin (\theta k)\right)
$$
where $M=\left(\sqrt{\frac{\gamma_{1}^{2}}{4}+\frac{\left(-\gamma_{1}^{2}-4 \gamma_{2}\right)}{4}}\right)=\sqrt{-\gamma_{2}}$ and $\theta$ is such that $\gamma_{1}=2 M \cos (\theta)$ and $\sqrt{-\gamma_{1}^{2}-4 \gamma_{2}}=2 M \sin (\theta)$.
\end{lemma}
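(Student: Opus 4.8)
The plan is to solve the recurrence in closed form through its characteristic equation and then convert the resulting complex exponentials into the real trigonometric form claimed. First I would substitute the ansatz $r^k = t^k$ into $r^{k+1} = \gamma_1 r^k + \gamma_2 r^{k-1}$, which after dividing by $t^{k-1}$ produces the characteristic equation $t^2 - \gamma_1 t - \gamma_2 = 0$ with roots
$$
t_{\pm} = \frac{\gamma_1 \pm \sqrt{\gamma_1^2 + 4\gamma_2}}{2}.
$$
Under the hypothesis $\gamma_1^2 + 4\gamma_2 < 0$ the discriminant is strictly negative, so the two roots form a genuine complex-conjugate pair
$$
t_{\pm} = \frac{\gamma_1}{2} \pm \frac{i}{2}\sqrt{-\gamma_1^2 - 4\gamma_2},
$$
and in particular $t_+ \neq t_-$.

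Next I would put the roots into polar form. A direct computation gives $|t_{\pm}|^2 = \frac{\gamma_1^2}{4} + \frac{-\gamma_1^2 - 4\gamma_2}{4} = -\gamma_2$, so that $|t_{\pm}| = \sqrt{-\gamma_2} = M$; here $M$ is real and positive because $\gamma_1^2 + 4\gamma_2 < 0$ forces $\gamma_2 < 0$. Defining $\theta$ by $\gamma_1 = 2M\cos\theta$ and $\sqrt{-\gamma_1^2 - 4\gamma_2} = 2M\sin\theta$, I can write $t_{\pm} = M(\cos\theta \pm i\sin\theta) = M e^{\pm i\theta}$ by Euler's formula, so that $t_{\pm}^k = M^k(\cos(k\theta) \pm i\sin(k\theta))$.

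Then I would invoke the standard structure theorem for second-order linear homogeneous recurrences: because the two characteristic roots are distinct, every solution is of the form $r^k = A\, t_+^k + B\, t_-^k$ for constants $A, B \in \mathbb{C}$, and these are uniquely determined by $r^0, r^1$ via the $2\times 2$ system whose Vandermonde determinant is $t_- - t_+ \neq 0$. Substituting the polar expressions yields
$$
r^k = M^k\big[(A+B)\cos(k\theta) + i(A-B)\sin(k\theta)\big],
$$
and setting $c_0 = (A+B)/2$ and $c_1 = i(A-B)/2$ gives exactly $r^k = 2M^k\big(c_0\cos(\theta k) + c_1\sin(\theta k)\big)$ with $c_0, c_1$ complex constants depending on the initial data.

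There is no genuine obstacle in this argument, as it is a classical computation; the only points deserving a line of care are verifying that $\gamma_1^2 + 4\gamma_2 < 0$ indeed forces $\gamma_2 < 0$ (so that $M = \sqrt{-\gamma_2}$ is real), and checking the consistency of the defining relations for $\theta$, namely $(2M\cos\theta)^2 + (2M\sin\theta)^2 = 4M^2$, which is immediate from $|t_{\pm}|^2 = M^2$. Everything else is the routine application of Euler's formula and the regrouping of the two conjugate exponential terms into a sine and a cosine.
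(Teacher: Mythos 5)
Your proof is correct. One point of comparison worth noting: the paper does not prove this lemma at all --- it is quoted verbatim from the literature (Fillmore--Williams and Elaydi's text on difference equations), so your blind attempt supplies the argument the paper delegates to its references. What you give is the standard, and essentially the only natural, proof: form the characteristic equation $t^2-\gamma_1 t-\gamma_2=0$, observe that under $\gamma_1^2+4\gamma_2<0$ its roots are the distinct complex conjugates $t_\pm=\tfrac{\gamma_1}{2}\pm\tfrac{i}{2}\sqrt{-\gamma_1^2-4\gamma_2}$, invoke the structure theorem $r^k=At_+^k+Bt_-^k$ (valid because the Vandermonde determinant $t_--t_+$ is nonzero), and regroup via Euler's formula. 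Your care with the two genuine hypotheses-checks is exactly right: $\gamma_1^2+4\gamma_2<0$ forces $\gamma_2<0$, so $M=\sqrt{-\gamma_2}$ is a positive real number and in particular $M\neq 0$, and the defining relations for $\theta$ are consistent because $\left(\tfrac{\gamma_1}{2M}\right)^2+\left(\tfrac{\sqrt{-\gamma_1^2-4\gamma_2}}{2M}\right)^2=\tfrac{-4\gamma_2}{4M^2}=1$. The only cosmetic remark is that when $r^0,r^1$ are real one has $B=\overline{A}$, so your $c_0=(A+B)/2$ and $c_1=i(A-B)/2$ are in fact real; the lemma as stated only claims complex constants, so this does not affect correctness, but it explains why the right-hand side $2M^k\left(c_0\cos(\theta k)+c_1\sin(\theta k)\right)$ is manifestly real despite the complex intermediate quantities.
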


We first prove Theorem \ref{THMfm2}.
\begin{proof}[Proof of Theorem \ref{THMfm2}]
Set $s^k:=V^\top\mathbb{E}[x^{k}-x^0_*]$. Then from Lemma \ref{lemma-62912} we have
$$
s^{k+1}=\bigg((1+\omega)I-\alpha\frac{\Sigma^\top\Sigma}{\|A\|^2_F}\bigg)s^k-\omega s^{k-1},$$
which can be rewritten in a coordinate-by-coordinate form as follows:
\begin{equation}\label{prf-THMm2-1}
s^{k+1}_i=\big((1+\omega)-\alpha\sigma^2_i(A)/\|A\|^2_F\big)s_i^k-\omega s_i^{k-1}, \ \ \forall i=1,2,\ldots,n,
\end{equation}
where $s^k_i$ indicated the $i$-th coordinate of $s^k$.

We now consider two cases:
$\sigma_i(A)=0$ or $\sigma_i(A)>0$.

If $\sigma_i(A)=0$, then \eqref{prf-THMm2-1} takes the form:
$$
s^{k+1}_i=(1+\omega)s_i^k-\omega s_i^{k-1}.
$$
Since $x^1-x^0_*=x^0-x^0_*=A^\dagger(Ax^0-b)$, we have $s^0_i=s^1_i=v_i^\top A^\dagger(Ax^0-b)=0$.
So
$$
s^k_i=0 \ \ \mbox{for all} \ k\geq0.
$$

If $\sigma_i(A)>0$. We use Lemma \ref{Lemma-relation} to establish the desired bound. Since $0<\alpha\leq\frac{\|A\|^2_F}{\sigma_{\max}^2(A)}$, one can verify that $1-\alpha\sigma_i^2(A)/\|A\|^2_F\geq 0$ and since $\omega\geq0$, we have $1+\omega-\alpha\sigma_i^2(A)/\|A\|^2_F\geq 0$ and hence
$$
\begin{array}{ll}
\gamma_1^2+4\gamma_2&=\big(1+\omega-\alpha\sigma_i^2(A)/\|A\|^2_F\big)^2-4\omega
\\
&\leq\big(1+\omega-\alpha\sigma_{\min}^2(A)/\|A\|^2_F\big)^2-4\omega
\\
&<0,
\end{array}
$$
where the last inequality can be shown to hold for
$$\left(1-\sqrt{\alpha\sigma_{\min}^2(A)/\|A\|^2_F}\right)^2<\omega<1.$$
Using Lemma \ref{Lemma-relation}, the following bound can be deduced
$$
s^{k}_{i}=2\left(-\gamma_{2}\right)^{k / 2}\left(c_{0} \cos (\theta k)+c_{1} \sin (\theta k)\right) \leq 2 \omega^{k / 2} p_{i},
$$
where $p_{i}$ is a constant depending on the initial conditions (we can simply choose $p_{i}=\left|c_{0}\right|+\left|c_{1}\right|.$)
Now put the two cases together, for all $k \geq 0$ we have
$$
\begin{aligned}
\left\|\mathbb{E}\left[x^{k}-x^0_{*}\right]\right\|_{2}^{2} &=\left\|V^{\top} \mathbb{E}\left[x^{k}-x^0_{*}\right]\right\|_{2}^{2}=\left\|s^{k}\right\|^{2}=\sum_{i=1}^{n}\left(s^{k}_{i}\right)^{2} \\
&=\sum_{i: \sigma_{i}(A)=0}\left(s^{k}_{i}\right)^{2}+\sum_{i: \sigma_{i}(A)>0}\left(s^{k}_{i}\right)^{2} = \sum_{i: \sigma_{i}(A)>0}\left(s^{k}_{i}\right)^{2} \\
& \leq \sum_{i: \sigma_{i}(A)>0} 4 \omega^{k} p_{i}^{2} \\
&=\omega^{k} c,
\end{aligned}
$$
where $c=4 \sum_{i: \lambda_{i}>0} p_{i}^{2}$.
\end{proof}

\begin{proof}[Proof of Theorem \ref{THMnm2}]
Theorem \ref{THMnm2} can be directly derived from Lemma \ref{lemma-62912}. Indeed, using similar arguments as that in the proof of Theorem \ref{THMfm2}, we can get
$$
\left\|\mathbb{E}\left[x^{k}-x^0_{*}\right]\right\|_{2}^{2} = \sum_{i: \sigma_{i}(A)>0}\left(s^{k}_{i}\right)^{2},
$$
where $$s_i^{k}=\left(1-\alpha\sigma^2_i(A)/\|A\|^2_F\right)s_i^{k-1}, i=1,\ldots,n.$$
Hence, we have
$$
  \left\|\mathbb{E}\left[x^k-x^0_*\right]\right\|^2_2\leq\left(1-\alpha\frac{\sigma_{\min}^2(A)}{\|A\|^2_F}
\right)^{2k}\left\|x^0-x_0^*\right\|^2_2.
$$
This completes the proof of the theorem.
\end{proof}

\begin{proof}[Proof of Theorem \ref{THMmain}]
Using similar arguments as that in the proof of Theorem \ref{THMfm2} and \eqref{prf-THMm2-1}, we know that
$$
s^{k+1}_\ell=\big((1-\alpha)+\alpha\big(1-2\sigma_{\ell}/\|A\|^2_F\big)^r\big)s_\ell^k, \ \ \forall \ell=1,2,\ldots,n,
$$
where
$s^{k+1}_{\ell}=\mathop{\mathbb{E}}[\langle x^{k+1}-x^0_*,v_{\ell}\rangle]$ and hence the theorem holds.
\end{proof}

\subsection{Proof of Theorems \ref{THMx} and \ref{THMcfull}}

\begin{proof}[Proof of Theorem \ref{THMx}]
Straightforward calculations yield
\begin{equation}\label{proof-THM-6301}
\begin{array}{ll}
\|x^{k+1}-A^{\dagger}b\|^2_2&=\|x^k-\alpha T_1T_2^\top A^\top S_1S_2^\top(Ax^k-b)+\omega(x^k-x^{k-1})-A^{\dagger}b\|^2_2
\\&=\underbrace{\|x^{k}-\alpha T_1T_2^\top A^\top S_1S_2^\top(Ax^k-b)-A^{\dagger}b\|^2_2}_{\textcircled{a}}\\
& \ \ \ +\underbrace{2\omega\big\langle x^{k}-\alpha T_1T_2^\top A^\top S_1S_2^\top(Ax^k-b)-A^{\dagger}b,x^k-x^{k-1}\big\rangle}_{\textcircled{b}}\\
&\ \ \ +\underbrace{\omega^2\|x^k-x^{k-1}\|^2_2}_{\textcircled{c}}.
\end{array}
\end{equation}
We will now analyze the three expressions $\textcircled{a},\textcircled{b},\textcircled{c}$ separately.
The first expression can be written as
$$
\textcircled{a}=\|x^{k}-A^{\dagger}b\|^2_2-2\alpha\big\langle x^{k}-A^{\dagger}b,T_1T_2^\top A^\top S_1S_2^\top(Ax^k-b)\big\rangle+\alpha^2\|T_1T_2^\top A^\top S_1S_2^\top (Ax^k-b)\|^2_2.
$$
We will now bound the second expression. We have
$$
\begin{array}{ll}
\textcircled{b}&=2\omega\big\langle x^{k}-A^{\dagger}b,x^k-x^{k-1}\big\rangle-2\omega\alpha\big\langle T_1T_2^\top A^\top S_1S_2^\top(Ax^k-b),x^k-x^{k-1}\big\rangle\\
&=2\omega\big\langle x^{k}-A^{\dagger}b,x^k-A^{\dagger}b\big\rangle+2\omega\big\langle x^{k}-A^{\dagger}b,A^{\dagger}b-x^{k-1}\big\rangle
\\
&\ \ \ -2\omega\alpha\big\langle T_1T_2^\top A^\top S_1S_2^\top(Ax^k-b),x^k-x^{k-1}\big\rangle
\\
&\leq 3\omega\|x^{k}-A^{\dagger}b\|^2_2+\omega\|x^{k-1}-A^{\dagger}b\|^2_2-2\omega\alpha\big\langle T_1T_2^\top A^\top S_1S_2^\top(Ax^k-b),x^k-x^{k-1}\big\rangle.
\end{array}
$$
The third expression can be bounded as
$$\textcircled{c}\leq2\omega^2 \|x^{k}-A^{\dagger}b\|^2_2+2\omega^2\|x^{k-1}-A^{\dagger}b\|^2_2.$$
By substituting all the bounds  into \eqref{proof-THM-6301}, we obtain
$$
\begin{array}{ll}
\|x^{k+1}-A^{\dagger}b\|^2_2&\leq(1+3\omega+2\omega^2)\|x^{k}-A^{\dagger}b\|^2_2+(\omega+2\omega^2)\|x^{k-1}-A^{\dagger}b\|^2_2\\
&\ \ \ -2\alpha\big\langle x^{k}-A^{\dagger}b,T_1T_2^\top A^\top S_1S_2^\top(Ax^k-b)\big\rangle
\\
&\ \ \ +\alpha^2\|T_1T_2^\top A^\top S_1S_2^\top (Ax^k-b)\|^2_2
\\
&\ \ \ -2\omega\alpha \big\langle T_1T_2^\top A^\top S_1S_2^\top(Ax^k-b),x^k-x^{k-1}\big\rangle.
\end{array}
$$
Now taking the conditional expectation and noting that $\mathbb{E}[T_1T_2^\top A^\top S_1S_2^\top]=\frac{A^\top}{\|A\|^2_F}$ and $A^\top AA^\dagger b=A^\top b$, we have
\begin{equation}\label{prf-main-71-432}
\begin{array}{ll}
\mathbb{E}\left[\|x^{k+1}-A^{\dagger}b\|^2_2|x^k\right]&\leq (1+3\omega+2\omega^2)\|x^{k}-A^{\dagger}b\|^2_2+(\omega+2\omega^2)\|x^{k-1}-A^{\dagger}b\|^2_2\\
&\ \ \ -2\alpha\big\langle x^{k}-A^{\dagger}b,\mathbb{E}[T_1T_2^\top A^\top S_1S_2^\top](Ax^k-b)\big\rangle
\\
&\ \ \ +\alpha^2\mathbb{E}\big[\|T_1T_2^\top A^\top S_1S_2^\top (Ax^k-b)\|^2_2\big]
\\
&\ \ \ -2\omega\alpha\big\langle \mathbb{E}[T_1T_2^\top A^\top S_1S_2^\top](Ax^k-b),x^k-x^{k-1}\big\rangle
\\
&=(1+3\omega+2\omega^2)\|x^{k}-A^{\dagger}b\|^2_2+(\omega+2\omega^2)\|x^{k-1}-A^{\dagger}b\|^2_2\\
&\ \ \ -\frac{2\alpha}{\|A\|^2_F}\big\langle x^{k}-A^{\dagger}b,A^\top A(x^{k}-A^{\dagger}b)\big\rangle
\\
&\ \ \ +\alpha^2\mathbb{E}\big[\|T_1T_2^\top A^\top S_1S_2^\top (Ax^k-b)\|^2_2\big]
\\
&\ \ \ +\frac{2\omega\alpha}{\|A\|^2_F}\big\langle A^\top A(x^{k}-A^{\dagger}b),x^{k-1}-x^{k}\big\rangle.
\end{array}
\end{equation}
Using the inequality $\|a-b\|^2_2\leq 2\|a\|^2_2+2\|b\|^2_2$, we have
\begin{equation}\label{proof-THMfr2}
\begin{array}{ll}
\|T_1T_2^\top A^\top S_1S_2^\top(Ax^k-b)\|^2_2
&\leq 2\|T_1T_2^\top A^\top S_1S_2^\top A (x^k-A^\dagger b)\|^2_2
\\
&\ \ \ +2\|T_1T_2^\top A^\top S_1S_2^\top (AA^\dagger b-b)\|^2_2.
\end{array}
\end{equation}
Note that
\begin{equation}\label{proof-THMfr3}
\begin{array}{ll}
&\mathbb{E}\big[\|T_1T_2^\top A^\top S_1S_2^\top A (x^k-A^\dagger b)\|^2_2\big]\\
&= (x^k-A^\dagger b)^\top A^\top \mathbb{E}\big[S_2S_1^\top AT_2T_1^\top T_1T_2^\top A^\top S_1S_2^\top\big] A (x^k-A^\dagger b)\\
&\leq \beta_1 (x^k-A^\dagger b)^\top A^\top A (x^k-A^\dagger b)
\end{array}
\end{equation}
and similarly,
\begin{equation}\label{proof-THMfr4}
\mathbb{E}\big[\|T_1T_2^\top A^\top S_1S_2^\top (AA^\dagger b-b)\|^2_2]\leq \beta_1\|AA^\dagger b-b\|^2_2.
\end{equation}
We also have
\begin{equation}\label{prf-main-71-433}
\begin{array}{ll}
2\langle A^\top A (x^{k}-A^\dagger b),x^{k-1}-x^{k}\rangle&=2\langle A^\top A (x^{k}-A^\dagger b),(x^{k-1}-A^\dagger b)-(x^{k}-A^\dagger b)\rangle
\\
&\leq \|A(x^{k-1}-A^\dagger b)\|_2^2-\|A(x^{k}-A^\dagger b)\|_2^2
\\
&\leq \sigma_{\max}^2(A)\|x^{k-1}-A^\dagger b\|_2^2-\sigma_{\min}^2(A)\|x^{k}-A^\dagger b\|_2^2.
\end{array}
\end{equation}
Since $0<\alpha<\frac{1}{\beta_1\|A\|^2_F}$ and $A$ has full column rank, we have
\begin{equation}
\label{prf0816}
\frac{2\alpha-2\alpha^2\beta_1\|A\|^2_F}{\|A\|^2_F}(x^k-A^\dagger b)^\top A^\top A (x^k-A^\dagger b)\geq \frac{2\alpha-2\alpha^2\beta_1\|A\|^2_F}{\|A\|^2_F}\sigma^2_{\min}(A)\|x^k-A^\dagger b\|_2^2.
\end{equation}
 Combining \eqref{prf-main-71-432}, \eqref{proof-THMfr2}, \eqref{proof-THMfr3}, \eqref{proof-THMfr4},  \eqref{prf-main-71-433}, and \eqref{prf0816} yields
$$
\begin{array}{ll}
\mathbb{E}\left[\|x^{k+1}-A^{\dagger}b\|^2_2|x^k\right]&\leq\bigg(\underbrace{1+3\omega+2\omega^2
-(2\alpha+\alpha\omega-2\alpha^2\beta_1\|A\|^2_F)\frac{\sigma_{\min}^2(A)}{\|A\|^2_F}}_{\gamma_1}\bigg)
\|x^k-A^\dagger b\|^2_2
\\
&\ \ \ +\bigg(\underbrace{\omega+2\omega^2+\omega\alpha\frac{\sigma^2_{\max}(A)}{\|A\|^2_F}}_{\gamma_2}\bigg)\|x^{k-1}-A^\dagger b\|^2_2
+\underbrace{2\beta_1\alpha^2\|AA^\dagger b-b\|^2_2}_{\zeta}.
\end{array}
$$
Then using Lemma \ref{Lemma-relation} and the same arguments as that in the proof of Theorem \ref{THMr-in629}, we can get the theorem.
\end{proof}

\begin{proof}[Proof of Theorem \ref{THMcfull}]
Note that the linear system is consistent, hence
$$\begin{array}{ll}
&\mathbb{E}\big[\|T_1T_2^\top A^\top S_1S_2^\top(Ax^k-b)\|^2_2\big]=\mathbb{E}\big[\|T_1T_2^\top A^\top S_1S_2^\top A(x^k-A^\dagger b)\|^2_2\big]\\
&= (x^k-A^\dagger b)^\top A^\top\mathbb{E}\big[S_2S_1^\top AT_2T_1^\top T_1T_2^\top A^\top S_1S_2^\top\big]A(x^k-A^\dagger b)\\
&\leq \beta_1 (x^k-A^\dagger b)^\top A^\top A(x^k-A^\dagger b),
\end{array}
$$
which together  with \eqref{prf-main-71-432} and \eqref{prf-main-71-433} implied that
$$
\begin{array}{ll}
\mathbb{E}\left[\|x^{k+1}-A^\dagger b\|^2_2|x^k\right]&\leq\bigg(1+3\omega+2\omega^2
-\alpha\omega\frac{\sigma_{\min}^2(A)}{\|A\|^2_F}\bigg)
\|x^k-A^\dagger b\|^2_2\\
&\ \ \
-\frac{2\alpha-\alpha^2\beta_1\|A\|^2_F}{\|A\|^2_F}\big\langle A(x^{k}-A^{\dagger}b),  A (x^k-A^\dagger b)\big\rangle
\\
&\ \ \ +\bigg(\omega+2\omega^2+\omega\alpha\frac{\sigma^2_{\max}(A)}{\|A\|^2_F}\bigg)\|x^{k-1}-A^\dagger b\|^2_2.
\end{array}
$$
Since $2\alpha-\alpha^2\beta_1\|A\|^2_F\geq 0$ and hence
$$
\frac{2\alpha-\alpha^2\beta_1\|A\|^2_F}{\|A\|^2_F}\big\langle A(x^{k}-A^{\dagger}b),  A (x^k-A^\dagger b)\big\rangle\geq \frac{2\alpha-\alpha^2\beta_1\|A\|^2_F}{\|A\|^2_F}\sigma_{\min}^2(A)\|x^{k}-A^{\dagger}b\|^2_2.
$$
Thus
$$
\begin{array}{ll}
\mathbb{E}\left[\|x^{k+1}-A^\dagger b\|^2_2|x^k\right]&\leq\bigg(\underbrace{1+3\omega+2\omega^2
-\big(2\alpha-\alpha^2\beta_1\|A\|^2_F+\alpha\omega\big)\frac{\sigma_{\min}^2(A)}{\|A\|^2_F}}_{\gamma_1}\bigg)
\|x^k-A^\dagger b\|^2_2
\\
&\ \ \ +\bigg(\underbrace{\omega+2\omega^2+\omega\alpha\frac{\sigma^2_{\max}(A)}{\|A\|^2_F}}_{\gamma_2}\bigg)\|x^{k-1}-A^\dagger b\|^2_2.
\end{array}
$$
Then using Lemma \ref{Lemma-relation} with $\zeta=0$ and  the same arguments as that in the proof of Theorem \ref{THMr-in629}, we can obtain the desired result.
\end{proof}


\subsection{Proof of Theorems \ref{THMr}, \ref{THMconsclar}, and \ref{THMr7251}}

\begin{proof}[Proof of Theorem \ref{THMr}]
Since $S_1=S_2=\sqrt{\rho} I$, then we have
\begin{equation}\label{prf-main-71-15}
\begin{array}{ll}
\mathbb{E}[\|AT_1T_2^\top A^\top S_1S_2^\top r^k\|^2_2]&=\rho^2(r^{k}-r^*)^\top A\mathop{\mathbb{E}}[ T_2T_1^\top A^\top AT_1T_2^\top ]A^\top(r^{k}-r^*)
\\
&\leq \rho^2\beta_2\|A^\top(r^{k}-r^*)\|^2_2,
\end{array}
\end{equation}
and hence \eqref{prf-main-71-1} becomes
$$\begin{array}{ll}
\mathbb{E}\left[\|r^{k+1}-r^*\|^2_2|x^k\right]&\leq(1+3\omega+2\omega^2)\|r^k-r^*\|^2_2+(\omega+2\omega^2)\|r^{k-1}-r^*\|^2_2
\\
&\ \ \ -\big(\frac{2\alpha}{\|A\|^2_F}-\alpha^2\rho^2\beta_2\big)\langle r^k-r^*, A A^\top( r^{k}-r^*)\rangle
\\
&\ \ \ +\frac{2\omega\alpha}{\|A\|^2_F}\langle A A^\top (r^{k}-r^*),r^{k-1}-r^{k}\rangle.
\end{array}
$$
Since $\frac{2\alpha}{\|A\|^2_F}-\alpha^2\rho^2\beta_2\geq0$, by \eqref{proof-THM-6294} and \eqref{prf-main-71-2} we have
$$
\begin{array}{ll}
\mathbb{E}\left[\|r^{k+1}-r^*\|^2_2|x^k\right]&\leq\bigg(\underbrace{1+3\omega+2\omega^2
-\left(2\alpha-\alpha^2\rho^2\beta_2\|A\|^2_F+\alpha\omega\right)\frac{\sigma_{\min}^2(A)}{\|A\|^2_F}}_{\gamma_1}\bigg)
\|r^k-r^*\|^2_2
\\
&\ \ \ +\bigg(\underbrace{\omega+2\omega^2+\omega\alpha\frac{\sigma^2_{\max}(A)}{\|A\|^2_F}}_{\gamma_2}\bigg)\|r^{k-1}-r^*\|^2_2.
\end{array}
$$
Then using Lemma \ref{Lemma-relation} with $\zeta=0$ and  the same arguments as that in the proof of Theorem \ref{THMr-in629}, we can obtain the desired result.
\end{proof}

\begin{proof}[Proof of Theorem \ref{THMr7251}]
Since $T_1T_2^\top A^\top S_1^\top S_2r^*=0$, then \eqref{prf-main-71-15} becomes
$$\begin{array}{ll}
\mathbb{E}[\|AT_1T_2^\top A^\top S_1S_2^\top r^k\|^2_2]&=
\mathbb{E}[\|AT_1T_2^\top A^\top S_1S_2^\top (r^k-r^*)\|^2_2]
\\&=(r^{k}-r^*)^\top \mathbb{E}[S_2S_1^\top A T_2T_1^\top A^\top AT_1T_2^\top A^\top S_1S_2^\top](r^{k}-r^*)
\\
&\leq \beta\|r^{k}-r^*\|^2_2.
\end{array}
$$
By using similar arguments as that in the proof of Theorem \ref{THMr},  we know the theorem holds.
\end{proof}

\begin{proof}[Proof of Theorem \ref{THMconsclar}]
Similar to \eqref{prf-main-71-432} and note that $ Ax^0_*=b$, we have
\begin{equation}\label{prf-main-75-1}
\begin{array}{ll}
\mathbb{E}\left[\|x^{k+1}-x^0_*\|^2_2|x^k\right]&\leq(1+3\omega+2\omega^2)\|x^{k}-x^0_*\|^2_2+(\omega+2\omega^2)\|x^{k-1}-x^0_*\|^2_2\\
&\ \ \ -\frac{2\alpha}{\|A\|^2_F}\big\langle x^{k}-x^0_*,A^\top A(x^{k}-x^0_*)\big\rangle
\\
&\ \ \ +\alpha^2\mathbb{E}\big[\|T_1T_2^\top A^\top S_1S_2^\top A(x^k-x^0_*)\|^2_2\big]
\\
&\ \ \ +\frac{2\omega\alpha}{\|A\|^2_F}\big\langle A^\top A(x^{k}-x^0_*),x^{k-1}-x^{k}\big\rangle.
\end{array}
\end{equation}
Since $T_1=T_2=\sqrt{\rho}I$,  we have
\begin{equation}\label{prf-main-75-2}
\mathbb{E}\big[\|T_1T_2^\top A^\top S_1S_2^\top A(x^k-x^{0}_*)\|^2_2\big]\leq \beta_3\rho^2 (x^k-x^{0}_*)^\top A^\top A(x^k-x^{0}_*).
\end{equation}
We also have
$$\begin{array}{ll}
2\langle A^\top A (x^{k}-x^{0}_*),x^{k-1}-x^{k}\rangle&=2\langle A^\top A (x^{k}-x^{0}_*),(x^{k-1}-x^{0}_*)-(x^{k}-x^{0}_*)\rangle
\\
&\leq \|A(x^{k-1}-x^{0}_*)\|_2^2-\|A(x^{k}-x^{0}_*)\|_2^2
\\
&\leq \sigma_{\max}^2(A)\|x^{k-1}-x^{0}_*\|_2^2-\|A(x^{k}-x^{0}_*)\|_2^2.
\end{array}$$
which together  with \eqref{prf-main-75-1} and \eqref{prf-main-75-2} implied that
\begin{equation}\label{prf-main-75-3}
\begin{array}{ll}
\mathbb{E}\left[\|x^{k+1}-x^0_*\|^2_2|x^k\right]&\leq(1+3\omega+2\omega^2)\|x^{k}-x^0_*\|^2_2+(\omega+2\omega^2)\|x^{k-1}-x^0_*\|^2_2\\
&\ \ \ -\frac{2\alpha-\alpha^2\rho^2\beta_3\|A\|^2_F}{\|A\|^2_F}\big\langle x^{k}-x^0_*,A^\top A(x^{k}-x^0_*)\big\rangle
\\
&\ \ \ +\frac{\omega\alpha}{\|A\|^2_F}\sigma_{\max}^2(A)\|x^{k-1}-x^{0}_*\|_2^2-\frac{\omega\alpha}{\|A\|^2_F}\|A(x^{k}-x^{0}_*)\|_2^2.
\end{array}
\end{equation}
Note that the iteration of Algorithm \ref{mPDFR} now becomes
$$
x^{k+1}=x^k-\alpha\rho A^\top S_1S_2^\top(Ax^k-b)+\omega(x^k-x^{k-1}),
$$
and as we choose $x^1=x^0$, so $x^k\in \mbox{Range}(A^\top)+x^0$ for $k=0,1,\ldots$, and therefore, $$x^k-x^0_*=x^k-x^0-A^\dagger( b-Ax^0) \in \mbox{Range}(A^\top).$$
 Hence
$$
\big\langle A(x^{k}-x^0_*),  A (x^k-x^0_*)\big\rangle\geq \sigma_{\min}^2(A)\|x^{k}-x^0_*\|^2_2.
$$
Note that $2\alpha-\alpha^2\rho^2\beta_3\|A\|^2_F\geq 0$, from \eqref{prf-main-75-3}, we have
$$
\begin{array}{ll}
\mathbb{E}\left[\|x^{k+1}-x^0_*\|^2_2|x^k\right]&\leq\bigg(\underbrace{1+3\omega+2\omega^2
-\big(2\alpha-\alpha^2\rho^2\beta_3\|A\|^2_F+\alpha\omega\big)\frac{\sigma_{\min}^2(A)}{\|A\|^2_F}}_{\gamma_1}\bigg)
\|x^k-x^0_*\|^2_2
\\
&\ \ \ +\bigg(\underbrace{\omega+2\omega^2+\omega\alpha\frac{\sigma^2_{\max}(A)}{\|A\|^2_F}}_{\gamma_2}\bigg)\|x^{k-1}-x^0_*\|^2_2.
\end{array}
$$
Then using Lemma \ref{Lemma-relation} with $\zeta=0$ and  the same arguments as that in the proof of Theorem \ref{THMr-in629}, we can obtain the desired result.
\end{proof}

\subsection{Proof of Lemma \ref{lemma-gaussmatrix}}

The following lemma is needed for our proof.

\begin{lemma}
\label{lemma-gauss}
Consider a random vector $\eta\backsim\mathcal{N}(0,I_m)$ and a fixed matrix $A\in\mathbb{R}^{m\times n}$. Then
$$
\mathbb{E}[ \eta \eta^\top AA^\top \eta \eta^\top ]=2 AA^\top +\|A\|^2_F I.
$$
\end{lemma}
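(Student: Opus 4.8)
The plan is to evaluate the matrix $\mathbb{E}[\eta\eta^\top AA^\top\eta\eta^\top]$ entry by entry. Writing $M:=AA^\top\in\mathbb{R}^{m\times m}$, which is symmetric, the $(i,j)$ entry of $\eta\eta^\top M\eta\eta^\top$ is $\sum_{k,l}\eta_i\eta_k M_{kl}\eta_l\eta_j$, so that
$$
\mathbb{E}\big[(\eta\eta^\top M\eta\eta^\top)_{ij}\big]=\sum_{k,l}M_{kl}\,\mathbb{E}[\eta_i\eta_j\eta_k\eta_l].
$$
Thus everything reduces to knowing the fourth-order moments of the standard Gaussian vector $\eta\sim\mathcal{N}(0,I_m)$.

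The key tool I would invoke is Isserlis' (Wick's) theorem, which gives, for centered jointly Gaussian coordinates,
$$
\mathbb{E}[\eta_i\eta_j\eta_k\eta_l]=\delta_{ij}\delta_{kl}+\delta_{ik}\delta_{jl}+\delta_{il}\delta_{jk}.
$$
Since the $\eta_i$ are i.i.d.\ $\mathcal{N}(0,1)$, this can alternatively be verified directly by splitting into the cases according to how the four indices coincide and using $\mathbb{E}[\eta_i^2]=1$, $\mathbb{E}[\eta_i^4]=3$. Substituting this identity and summing the three contributions separately: the $\delta_{ij}\delta_{kl}$ term contributes $\delta_{ij}\sum_k M_{kk}=\delta_{ij}\,\mbox{Tr}(M)$; the $\delta_{ik}\delta_{jl}$ term contributes $M_{ij}$; and the $\delta_{il}\delta_{jk}$ term contributes $M_{ji}$.

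Finally, I would assemble these pieces. Using the symmetry $M_{ij}=M_{ji}$ the last two terms combine into $2M_{ij}$, and $\mbox{Tr}(M)=\mbox{Tr}(AA^\top)=\|A\|^2_F$, so that
$$
\mathbb{E}\big[(\eta\eta^\top AA^\top\eta\eta^\top)_{ij}\big]=2M_{ij}+\|A\|^2_F\,\delta_{ij}=\big(2AA^\top+\|A\|^2_F I\big)_{ij},
$$
which is exactly the claimed identity. The only genuine content is the fourth-moment computation; once Isserlis' theorem is in hand the remainder is bookkeeping, and the single point requiring care is correctly tracking the three Wick pairings and then exploiting the symmetry of $AA^\top$ to merge two of them into the factor $2$.
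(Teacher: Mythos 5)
Your proof is correct, and it takes a genuinely different route from the paper's. The paper first diagonalizes: it writes $A=U\Sigma V^\top$, sets $\zeta:=U^\top\eta$, invokes rotation invariance of the Gaussian to conclude $\zeta\sim\mathcal{N}(0,I_m)$, and then computes $\mathbb{E}\bigl[\bigl(\sum_\ell\sigma_\ell^2\zeta_\ell^2\bigr)\zeta_i\zeta_j\bigr]$ by an explicit case analysis using only the elementary moments $\mathbb{E}[\zeta_i^4]=3$, $\mathbb{E}[\zeta_i^2\zeta_j^2]=1$, $\mathbb{E}[\zeta_i\zeta_j]=0$, $\mathbb{E}[\zeta_i^3\zeta_j]=0$, before undoing the change of basis. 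You instead work directly with the entries of $M=AA^\top$ and invoke Isserlis'/Wick's theorem for the fourth moment $\mathbb{E}[\eta_i\eta_j\eta_k\eta_l]=\delta_{ij}\delta_{kl}+\delta_{ik}\delta_{jl}+\delta_{il}\delta_{jk}$, which collapses the whole computation to $\delta_{ij}\,\mathrm{Tr}(M)+M_{ij}+M_{ji}$. What your approach buys is brevity and generality: no SVD or rotation-invariance step is needed, and the same two lines show that for an arbitrary (not necessarily symmetric) $M$ one gets $M+M^\top+\mathrm{Tr}(M)I$, with the symmetry of $AA^\top$ used only at the very end to merge two pairings into the factor $2$. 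What the paper's approach buys is self-containedness — it needs nothing beyond the four scalar moments listed above — and it sets up the same SVD-plus-rotation-invariance machinery that the paper then reuses verbatim in the proof of Lemma \ref{lemma-gaussmatrix}, where the analogous entrywise Wick bookkeeping for a full Gaussian matrix $S$ would be more tedious. If you cite or briefly verify Isserlis' theorem (your direct case-check via $\mathbb{E}[\eta_i^2]=1$, $\mathbb{E}[\eta_i^4]=3$ suffices), the argument is complete.
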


\begin{proof}
Assume that $\mbox{rank}(A)=r$ and the nonzero singular values of a matrix $A$ are given by $\sigma_1\geq\sigma_2\geq\ldots\geq\sigma_{r}(A)>0$. Let $A=U\Sigma V^\top$ be the singular value decomposition of $A$.  Set $\zeta:=U^\top \eta$, then we have
$$
\begin{array}{ll}
&\mathbb{E}[\eta \eta^\top AA^\top \eta \eta^\top ]=\mathbb{E}[\eta \eta^\top U\Sigma \Sigma^\top U^\top \eta\eta^\top ]
\\
&=\mathbb{E}[U \zeta \zeta^\top\Sigma \Sigma^\top\zeta \zeta^\top U^\top]=U\mathbb{E}\big[\big(\zeta^\top \Sigma \Sigma^\top\zeta\big)\zeta \zeta^\top\big]U^\top
\\
&=U\mathbb{E}\left[\bigg(\sum\limits_{\ell=1}^r\sigma_\ell^2\zeta_\ell^2\bigg)\left(
    \begin{array}{cccc}
      \zeta_1^2 & \zeta_1\zeta_2 & \ldots & \zeta_1\zeta_m \\
     \zeta_2\zeta_1 & \zeta_2^2 & \ldots & \zeta_2\zeta_m \\
      \vdots & \vdots & \ddots & \vdots  \\
      \zeta_m\zeta_1 & \zeta_m\zeta_2 & \ldots & \zeta_m^2  \\
    \end{array}
  \right)\right] U^\top.
\end{array}
$$
By the rotation invariant of  Gaussian distribution, we know that $\zeta\backsim \mathcal{N}(0,I_m)$. Thus for any $i,j\in[m]$ and $i\neq j$,
$$\mathbb{E}[\zeta_i^4]=3, \mathbb{E}[\zeta_i^2\zeta_j^2]=1,  \mathbb{E}[\zeta_i\zeta_j]=0, \ \mbox{and} \ \mathbb{E}[\zeta_i^3\zeta_j]=0.$$
This implies that
$$
\mathbb{E}\bigg[\bigg(\sum\limits_{\ell=1}^r\sigma_\ell^2\zeta_\ell^2\bigg)\zeta_i\zeta_j\bigg]=
\left\{
  \begin{array}{ll}
    3\sigma^2_i+\bigg(\sum\limits_{\ell\neq i}\sigma_\ell^2\bigg), & \hbox{if $i=j$;} \\
    0, & \hbox{if $i\neq j$.}
  \end{array}
\right.
$$
Hence we have
$$
\begin{array}{ll}
\mathbb{E}[ \eta \eta^\top AA^\top \eta \eta^\top ]&=2U\left(
                                                                                   \begin{array}{ccccc}
                                                                                     \sigma^2_1 & \ldots & 0 & \ldots& 0 \\
                                                                                     \vdots & \ddots & \vdots & \vdots& \vdots \\
                                                                                     0 & \ldots & \sigma^2_r & \ldots& 0 \\
                                                                                     \vdots & \vdots & \vdots & \ddots& \vdots \\
0 & \ldots & 0 & \ldots& 0 \\
                                                                                   \end{array}
                                                                                 \right)U^\top +\|A\|^2_FI\\
&=2U\Sigma\Sigma^\top U^\top+\|A\|^2_F I
\\&=2U\Sigma V^\top V\Sigma^\top U^\top+\|A\|^2_F I\\
&=2AA^\top +\|A\|^2_FI
\end{array}
$$
as desired.
\end{proof}

\begin{proof}[Proof of Lemma \ref{lemma-gaussmatrix}]
Assume that $\mbox{rank}(A)=r$ and the nonzero singular values of a matrix $A$ are given by $\sigma_1\geq\sigma_2\geq\ldots\geq\sigma_{r}(A)>0$. Let $A=U\Sigma V^\top$ be the singular value decomposition of $A$.  Set $Q:=U^\top S$, then we have
$$
\begin{array}{ll}
\mathbb{E}[S S^\top AA^\top S S^\top ]&=\mathbb{E}[ S S^\top U\Sigma \Sigma^\top U^\top SS^\top ]
\\
&=\mathbb{E}[U Q Q^\top\Sigma \Sigma^\top Q Q^\top U^\top]=U\mathbb{E}\big[ Q Q^\top\Sigma \Sigma^\top Q Q^\top \big]U^\top
\\
&=U\mathbb{E}\bigg[\big(\sum\limits_{i=1}^p\Sigma^\top Q_i Q_i^\top \big)^2\bigg]U^\top
\\
&=U\mathbb{E}\bigg[\sum\limits_{i=1}^p Q_i Q_i^\top\Sigma\Sigma^\top Q_i Q_i^\top+\sum\limits_{i\neq j}\big( Q_i Q_i^\top \Sigma\Sigma^\top Q_j Q_j^\top \big)\bigg]U^\top
\\
&=U\mathbb{E}\bigg[\sum\limits_{i=1}^p Q_i Q_i^\top\Sigma\Sigma^\top Q_i Q_i^\top\bigg]U^\top+U\mathbb{E}\bigg[\sum\limits_{i\neq j}\big( Q_i Q_i^\top \Sigma\Sigma^\top Q_j Q_j^\top \big)\bigg]U^\top,
\end{array}
$$
where $Q_i$ denotes the $i$-th column of the matrix $Q$. Let denote the $(\ell,i)$-th entry of $Q$ as $q_{\ell i}$, then
$$
\begin{array}{ll}
&\mathbb{E}\bigg[\sum\limits_{i\neq j}\big( Q_i Q_i^\top \Sigma\Sigma^\top Q_j Q_j^\top\big)\bigg]=\sum\limits_{i\neq j}\mathbb{E}\bigg[\big(Q_i^\top \Sigma\Sigma^\top Q_j\big) Q_i  Q_j^\top \bigg]
\\&=\sum\limits_{i\neq j}\mathbb{E}\left[\bigg(\sum\limits_{\ell=1}^r\sigma_\ell^2q_{\ell i}q_{\ell j}\bigg)\left(
    \begin{array}{cccc}
      q_{1i}q_{1j} & q_{1i}q_{2j} & \ldots & q_{1i}q_{mj}  \\
      q_{2i}q_{1j} & q_{2i}q_{2j} & \ldots & q_{2i}q_{mj}  \\
      \vdots & \vdots & \ddots & \vdots  \\
     q_{mi}q_{1j} & q_{mi}q_{2j} & \ldots & q_{mi}q_{mj}  \\
    \end{array}
  \right)\right].
\end{array}
$$
By the rotation invariant of  Gaussian distribution, we know that $q_{\ell i}\backsim \mathcal{N}(0,1)$ for any $i\in[p]$ and $\ell\in[m]$. Thus for any $i,j\in[p]$ with $i\neq j$ and any $\ell,k,\lambda\in[m]$ with $k\neq \lambda$, we have
$$\mathbb{E}[q_{\ell i}q_{\ell j}q_{\lambda i}q_{\lambda j}]=\left\{
                                                               \begin{array}{ll}
                                                                 0, & \hbox{$\ell\neq\lambda$;} \\
                                                                 1, & \hbox{$\ell=\lambda$.}
                                                               \end{array}
                                                             \right.
\ \ \mbox{ and} \ \ \mathbb{E}[q_{\ell i}q_{\ell j}q_{k i}q_{\lambda j}]=0
.$$
This implies that
$$
\mathbb{E}\bigg[\bigg(\sum\limits_{\ell=1}^r\sigma_\ell^2q_{\ell i}q_{\ell j}\bigg) q_{k i}q_{\lambda j}\bigg]=
\left\{
  \begin{array}{ll}
   \sigma_{k}^2, & \hbox{if $k=\lambda$;} \\
    0, & \hbox{if $k\neq \lambda$.}
  \end{array}
\right.
$$
Hence we have
$$
\begin{array}{ll}
\mathbb{E}[S S^\top AA^\top S S^\top ]&=2pA^\top AA^\top A+p\|A\|^2_F A^\top A +\sum\limits_{i\neq j}U\left(
                                                                                   \begin{array}{ccccc}
                                                                                     \sigma^2_1 & \ldots & 0 & \ldots& 0 \\
                                                                                     \vdots & \ddots & \vdots & \vdots& \vdots \\
                                                                                     0 & \ldots & \sigma^2_r & \ldots& 0 \\
                                                                                     \vdots & \vdots & \vdots & \ddots& \vdots \\
0 & \ldots & 0 & \ldots& 0 \\
                                                                                   \end{array}
                                                                                 \right)U^\top
\\
&=2p AA^\top +p\|A\|^2_F I+p(p-1)U\Sigma\Sigma^\top U^\top
\\&=2p AA^\top +p\|A\|^2_F I+p(p-1) AA^\top \\
&=(p^2+p) AA^\top +p\|A\|^2_FI
\end{array}
$$
as desired.
\end{proof}

\end{document}